\numberwithin{equation}{section}
\newtheorem{thm}{Theorem}[section]
\newtheorem{prop}[thm]{Proposition}
\newtheorem{lem}[thm]{Lemma}
\newtheorem{cor}[thm]{Corollary}
\newtheorem{rmk}[thm]{Remark}
\theoremstyle{definition}
\newtheorem{defn}[thm]{Definition}
\newcommand{\nat}{\mathbb{N}}
\newcommand{\real}{\mathbb{R}}
\newcommand{\complex}{\mathbb{C}}
\newcommand{\iu}{\mathrm{i}}
\newcommand{\eps}{\varepsilon}
\newcommand{\I}{\mathcal{I}}
\newcommand{\J}{\mathcal{J}}
\newcommand{\op}{- \Delta_\alpha}
\newcommand{\dif}{\mathrm{d}}
\DeclareMathOperator{\Dom}{Dom}
\DeclarePairedDelimiter{\abs}{\lvert}{\rvert}
\DeclarePairedDelimiter{\norm}{\lVert}{\rVert}
\DeclarePairedDelimiter{\parens}{(}{)}
\DeclarePairedDelimiter{\set}{\{}{\}}
\DeclarePairedDelimiter{\brackets}{\lbrack}{\rbrack}
\DeclarePairedDelimiter{\angles}{\langle}{\rangle}
\DeclarePairedDelimiter{\cci}{\lbrack}{\rbrack}
\DeclarePairedDelimiter{\coi}{\lbrack}{\lbrack}
\DeclarePairedDelimiter{\oci}{\rbrack}{\rbrack}
\DeclarePairedDelimiter{\ooi}{\rbrack}{\lbrack}
\title{Minimizers of mass-constrained functionals involving a nonattractive point interaction}
\begin{document}

\author{Gustavo de Paula Ramos\thanks{gustavopramos@gmail.com}}
\affil{Instituto de Matemática e Estatística, Universidade de São Paulo, Rua do Matão, 1010, 05508-090 São Paulo SP, Brazil}

\date{\today}
\maketitle
\begin{abstract}
We establish conditions to ensure the existence of minimizer for a class of mass-constrained functionals involving a nonattractive point interaction in three dimensions. The existence of minimizers follows from the compactness of minimizing sequences which holds when we can simultaneously rule out the possibilities of vanishing and dichotomy. The proposed method is derived from the strategy used to avoid vanishing in Adami, Boni, Carlone \& Tentarelli (Calc. Var. 61, 195 (2022)) and the strategy used to avoid dichotomy in Bellazzini \& Siciliano (J. Funct. Anal. 261, 9 (2011)). As applications, we prove the existence of ground states with sufficiently small mass for the following nonlinear problems with a point interaction: a Kirchhoff-type equation and the Schrödinger--Poisson system.

\smallskip
\sloppy \noindent \textbf{Keywords.} Constrained minimization, delta interaction, standing waves, Kirchhoff equation, Schrödinger--Poisson system.
\end{abstract}

\tableofcontents

\section{Introduction}
\subsection{Motivation and goals}
\label{intro:motivation}

The existence of critical points of constrained functionals is a recurrent problem in both mathematics and physics. An important example of this kind of problem is the search for standing waves of Schrödinger-type equations. Indeed, the mass of solutions to these equations is constant with respect to time evolution, so there is a correspondence between standing waves and critical points of the associated mass-constrained energy functional.

This paper is concerned with functionals involving a point interaction in a sense that we will precise in what follows. In physics, the presence of defects or impurities is modeled by considering a formal $\delta$ potential. For instance, the following formal equation was recently proposed in \cite{sakaguchiSingularSolitons2020, shamrizSingularMeanfieldStates2020} as a model for the spatial profile of standing waves of the defocusing septimal \emph{Nonlinear Schrödinger Equation (NLSE)} on $\real$ to explain the existence of singular solitons:
\[- \frac{1}{2} u'' - \alpha \delta_0 u + \omega u = - u^7,\]
where $0 < \alpha, \omega < \infty$.

Instead of the formal operator $- \Delta + \frac{1}{\alpha} \delta_0$, we consider the mathematically rigorous model for point interactions given by the \emph{Hamiltonian of point interaction}
\[\op \colon \Dom \parens{\op} \to L^2\]
as defined in Section \ref{intro:nonlinear}. While there are many papers about nonlinear problems involving $\op$ on
$\real$ (for instance, \cite{adamiConstrainedEnergyMinimization2013, adamiExistenceDynamics1D2009, angulopavaStabilityPropertiesCubicQuintic2019, boniPrescribedMassGround2021, fukuizumiNonlinearSchrodingerEquation2008, lecozInstabilityBoundStates2008, pavaNonlinearSchrodingerEquation2013, pavaStabilityStandingWaves2017}), similar problems on higher dimensions have only started being considered recently.

Let us briefly review recent developments about nonlinear problems involving a point interaction in higher dimensions. First, consider the \emph{NLSE with a point interaction} ($\delta$-NLSE). The existence of action minimizers in dimension 2 was addressed in \cite{fukayaStabilityInstabilityStanding2022}. In \cite{adamiExistenceStructureRobustness2022, adamiGroundStatesPlanar2022}, Adami, Boni, Carlone \& Tentarelli established the existence and qualitative properties of ground states of the following $\delta$-NLSE in $\real^n$ for $n \in \set{2, 3}$:
\begin{equation}
\label{intro:eqn:NLSE}
\iu \partial_t \psi = \op \psi - \psi \abs{\psi}^{p - 2}
\end{equation}
with $\alpha \in \real$, $2 < p < 4$ if $n = 2$ and $2 < p < 3$ if
$n = 3$. In \cite{cacciapuotiWellPosednessNonlinear2021}, Cacciapuoti, Finco \& Noja addressed the local and global well-posedness of the following Cauchy problem in $\real^n$ for $n \in \set{2, 3}$:
\[
\begin{cases}
\iu \partial_t \psi = \op \psi \pm \psi \abs{\psi}^{p - 1};
\\
\psi \parens{0} = \psi_0,
\end{cases}
\]
where $\alpha \in \real$, $p \geq 1$ if $n = 2$ and $1 \leq p < 3 / 2$ if $n = 3$. This problem was also considered in \cite{cacciapuotiFailureScatteringNLSE2023}, where failure of scattering was established for $1 < p < 2$ if $n = 2$ and
$1 < p < 4 / 3$ if $n = 3$. To finish, we remark that there are also a few results about the Hartree equation with a point interaction (see \cite{michelangeliSingularHartreeEquation2021, georgievStandingWavesGlobal2024}).

In this context, our goal is to propose conditions that ensure the existence of minimizers for mass-constrained functionals involving a \emph{nonattractive} point interaction in $\real^3$, that is, $\alpha \in \coi{0, \infty}$. As we can only precisely state the general form of the considered minimization problem after defining the energy space $W^{1, 2}_\alpha$ in Section \ref{intro:energy-space}, we remark that we are interested in functionals which are often obtained as the energy functional associated to variational equations in $\real^3$ of the form
\[- f_1 \parens{u} \Delta_\alpha u + \omega u + f_2 \parens{u} = 0,\]
where $f_1$, $f_2 \colon \Dom \parens{\op} \to L^2$ are (possibly nonlocal) nonlinearities and we want to solve for
$\omega \in \real$, $u \in \Dom \parens{\op}$.

Let us comment on the organization of the rest of the introduction.
\begin{itemize}
\item
In Section \ref{intro:notation}, we fix the notation used throughout the paper.
\item
In Section \ref{intro:nonlinear}, we recall the precise definition of $\op$.
\item
In Section \ref{intro:energy-space}, we define the Hilbert space $W^{1, 2}_\alpha$ on which functionals involving a point interaction are defined.
\item
In Section \ref{intro:AMP}, we state the abstract problem considered in this paper.
\item
In Section \ref{intro:existence-of-soln}, we state our results about existence of solutions to abstract problems as in Section \ref{intro:AMP}.
\item
In Section \ref{intro:applications}, we state our existence results for ground states of concrete nonlinear problems involving a point interaction.
\end{itemize}

\subsection{Notation}
\label{intro:notation}

\begin{itemize}
\item
Unless mentioned otherwise, we implicitly suppose that $\alpha \in \coi{0, \infty}$.
\item
If $A, B$ are sets, $f \colon A \to B$ is a function and $A'$ is a subset of $A$, then $f|_{A'}$ denotes the \emph{restriction} of $f$ to $A'$, that is, the function
\[A' \ni a' \mapsto f \parens{a'} \in B.\]
\item
Suppose that $X$ and $Y$ are complex Banach spaces. In this context, we employ the notation summarized below:
\begin{itemize}
\item
We let $\mathcal{L} \parens{X, \real}$ denote the real Banach space of continuous
$\real$-linear functionals from $X$ to $\real$.
\item
If $T \colon X \to \real$ is $\real$-linear, then we denote its value at $x \in X$ by
$T \brackets{x} \in \real$.
\item
We write $X \hookrightarrow Y$ to mean that $X$ is canonically continuously embedded in $Y$.
\end{itemize}
\item
If $H$ is a Hilbert space, then its inner product is linear with respect to its second entry and we denote it by
$\parens{h_1, h_2} \mapsto \angles{h_1 \mid h_2}_H$.
\item
\sloppy
Unless mentioned otherwise, we integrate in $\real^3$ with respect to the Lebesgue measure and we consider functional spaces of complex-valued functions defined a.e. in
$\real^3$.
\item
The \emph{homogeneous Sobolev space} $\dot{W}^{1, 2}$ is defined as the Hilbert space completion of $C_c^\infty$ with respect to
$
\angles{u \mid v}_{\dot{W}^{1, 2}}
:=
\int \parens{\nabla \overline{u} \cdot \nabla v}
$.
\item
The \emph{Sobolev space} $W^{1, 2}$ is defined as the Hilbert space completion of $C_c^\infty$ with respect to
$
\angles{u \mid v}_{W^{1, 2}}
:=
\angles{u \mid v}_{\dot{W}^{1, 2}} + \angles{u \mid v}_{L^2}
$.
\item
The \emph{Sobolev space} $W^{2, 2}$ is defined as the Hilbert space completion of $C_c^\infty$ with respect to
\[
\angles{u \mid v}_{W^{2, 2}}
:=
\parens*{
	\sum_{1 \leq j, k \leq 3}
	\int
		\overline{\partial_j \partial_k u \parens{x}}
		\partial_j \partial_k v \parens{x}
	\dif x
}
+
\angles{u \mid v}_{W^{1, 2}}.
\]
\item
Throughout the paper, we implicitly use the fact that
$W^{2, 2} \hookrightarrow C^{0, \frac{1}{2}}$ that follows from Morrey's Embedding (see \cite[Theorem 12.55]{leoniFirstCourseSobolev2017}).
\end{itemize}

\subsection{The operator $\op$}
\label{intro:nonlinear}

Let us recall the definition of $\op$ according to the approach in \cite[Section I.1.1]{albeverioSolvableModelsQuantum1988}. The closure of
$- \Delta|_{C_0^\infty \parens{\real^3 \setminus \set{0}}}$ in $L^2$
admits a family of $L^2$-self-adjoint extensions, which we denote by
$\set{\op}_{\alpha \in \oci{- \infty, \infty}}$. The case $\alpha = \infty$ corresponds to the Friedrichs extension:
\[
- \Delta_\infty \phi = - \Delta \phi
\quad \text{for every} \quad
\phi \in \Dom \parens{- \Delta_\infty} = W^{2, 2}.
\]

Now, consider the case $\alpha \in \real$. We need to introduce a family of Green's functions to describe $\op$. Given $\lambda \in \ooi{0, \infty}$, we define
$G_\lambda \colon \real^3 \setminus \set{0} \to \ooi{0, \infty}$
as
\[
G_\lambda \parens{x}
=
\frac{e^{- \sqrt{\lambda} \abs{x}}}{4 \pi \abs{x}},
\]
so that
\begin{equation}
\label{eqn:Green's-fct}
- \Delta G_\lambda + \lambda G_\lambda = \delta_0
\quad \text{in the sense of distributions.}
\end{equation}
Standard integral calculus shows that
\begin{equation}
\label{eqn:integrability-of-G_lambda}
G_\lambda \in L^r \quad \text{if, and only if,} \quad 1 \leq r < 3.
\end{equation}
More precisely, $\norm{G_\lambda}_{L^2}^2 = \frac{1}{8 \pi \sqrt{\lambda}}$ and
\[
\norm{G_\lambda}_{L^r}^r
=
\frac{\norm{G_1}_{L^r}^r}{\lambda^{\frac{3 - r}{2}}}
\quad \text{for every} \quad
r \in \coi{1, 3}.
\]

In this context, the operator $\op$ acts as
\[
\op u = - \Delta \phi_\lambda - q \lambda G_\lambda
\quad \text{for every} \quad
u =  \phi_\lambda + q G_\lambda \in \Dom \parens{\op},
\]
where
\begin{multline*}
\Dom \parens{\op}
:=
\left\{
	\phi_\lambda + q G_\lambda
	:
	\parens{\lambda, \phi_\lambda, q}
	\in
	\ooi{0, \infty} \times W^{2, 2} \times \complex,
\right.
\\
\left.
	\alpha + \frac{\sqrt{\lambda}}{4 \pi} \neq 0
	\quad \text{and} \quad
	\phi_\lambda \parens{0}
	=
	\parens*{\alpha + \frac{\sqrt{\lambda}}{4 \pi}} q
\right\}
\subset
L^2.
\end{multline*}
An essential aspect of $\Dom \parens{\op}$ is the nonuniqueness of representation of its elements as stated more precisely in the following corollary of Lemma \ref{lem:G_lambda-G_mu-in-W^{2, 2}}.

\begin{cor}
\label{cor:nonuniqueness}
Suppose that $\alpha \in \real$; $\lambda, \mu \in \ooi{0, \infty}$ and $q \in \complex$.
\begin{enumerate}
\item
Given $\phi_\lambda \in W^{1, 2}$, there exists a unique $\phi_\mu \in W^{1, 2}$ such that
\[\phi_\lambda + q G_\lambda = \phi_\mu + q G_\mu.\]
\item
If
$
\parens*{\alpha + \frac{\sqrt{\lambda}}{4 \pi}}
\parens*{\alpha + \frac{\sqrt{\mu}}{4 \pi}}
\neq
0
$
and $\phi_\lambda \in W^{2, 2}$ is such that
\[\phi_\lambda + q G_\lambda \in \Dom \parens{\op},\]
then there exists a unique $\phi_\mu \in W^{2, 2}$
such that
\[\phi_\lambda + q G_\lambda = \phi_\mu + q G_\mu \in \Dom \parens{\op}.\]
\end{enumerate}
\end{cor}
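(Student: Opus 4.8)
The plan is to reduce both parts to the single rearrangement $\phi_\mu = \phi_\lambda + q \parens{G_\lambda - G_\mu}$ of the target identity $\phi_\lambda + q G_\lambda = \phi_\mu + q G_\mu$, and then to read off the conclusion from the regularity of $G_\lambda - G_\mu$ furnished by Lemma~\ref{lem:G_lambda-G_mu-in-W^{2, 2}}. The whole point is that the $\frac{1}{4 \pi \abs{x}}$ singularities of $G_\lambda$ and $G_\mu$ cancel in their difference, so that $G_\lambda - G_\mu \in W^{2, 2}$; this is what allows us to transfer the regular part of $u = \phi_\lambda + q G_\lambda$ from the parameter $\lambda$ to the parameter $\mu$ without leaving the relevant Sobolev space.

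For item (1), I would simply set $\phi_\mu := \phi_\lambda + q \parens{G_\lambda - G_\mu}$. Since $\phi_\lambda \in W^{1, 2}$ and, by the Lemma, $G_\lambda - G_\mu \in W^{2, 2} \hookrightarrow W^{1, 2}$, we get $\phi_\mu \in W^{1, 2}$, and the identity $\phi_\lambda + q G_\lambda = \phi_\mu + q G_\mu$ holds by construction. Uniqueness is immediate: if $\phi_\mu + q G_\mu = \tilde{\phi}_\mu + q G_\mu$ in $L^2$, then the $q G_\mu$ terms cancel and $\phi_\mu = \tilde{\phi}_\mu$, so $\phi_\mu$ is forced to equal $\phi_\lambda + q \parens{G_\lambda - G_\mu}$.

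For item (2), I would use the same $\phi_\mu := \phi_\lambda + q \parens{G_\lambda - G_\mu}$, which now lies in $W^{2, 2}$ because both summands do (again invoking the Lemma for $G_\lambda - G_\mu$). The extra content is to check $\phi_\mu + q G_\mu \in \Dom \parens{\op}$, i.e. that $\alpha + \frac{\sqrt{\mu}}{4 \pi} \neq 0$ and $\phi_\mu \parens{0} = \parens*{\alpha + \frac{\sqrt{\mu}}{4 \pi}} q$. The nonvanishing follows from the hypothesis $\parens*{\alpha + \frac{\sqrt{\lambda}}{4 \pi}} \parens*{\alpha + \frac{\sqrt{\mu}}{4 \pi}} \neq 0$ together with $\alpha + \frac{\sqrt{\lambda}}{4 \pi} \neq 0$ (the latter being part of $\phi_\lambda + q G_\lambda \in \Dom \parens{\op}$). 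For the boundary condition, since $W^{2, 2} \hookrightarrow C^{0, \frac{1}{2}}$ all these functions have genuine pointwise values at the origin, and a Taylor expansion of $e^{- \sqrt{\lambda} \abs{x}} - e^{- \sqrt{\mu} \abs{x}}$ near $0$ gives $\parens{G_\lambda - G_\mu} \parens{0} = \frac{\sqrt{\mu} - \sqrt{\lambda}}{4 \pi}$. Combining this with the known condition $\phi_\lambda \parens{0} = \parens*{\alpha + \frac{\sqrt{\lambda}}{4 \pi}} q$, the $\sqrt{\lambda}$ contributions cancel and yield $\phi_\mu \parens{0} = \parens*{\alpha + \frac{\sqrt{\mu}}{4 \pi}} q$, as required; uniqueness is again forced by the cancellation of the $q G_\mu$ term.

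The only step needing genuine care is the evaluation $\parens{G_\lambda - G_\mu} \parens{0} = \frac{\sqrt{\mu} - \sqrt{\lambda}}{4 \pi}$: one must first justify that this pointwise value is well defined (through the continuous representative provided by Morrey's embedding) and then compute it correctly. If Lemma~\ref{lem:G_lambda-G_mu-in-W^{2, 2}} already records both the membership $G_\lambda - G_\mu \in W^{2, 2}$ and this value at the origin, the corollary becomes a two-line bookkeeping of the representation $u = \phi_\lambda + q G_\lambda = \phi_\mu + q G_\mu$, and everything else is routine.
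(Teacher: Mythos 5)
Your proposal is correct and follows exactly the route the paper intends: the corollary is stated as an immediate consequence of Lemma~\ref{lem:G_lambda-G_mu-in-W^{2, 2}} via the rearrangement $\phi_\mu = \phi_\lambda + q\parens{G_\lambda - G_\mu}$, and your verification of the boundary condition $\phi_\mu\parens{0} = \parens{\alpha + \frac{\sqrt{\mu}}{4\pi}}q$ through the value $\parens{G_\lambda - G_\mu}\parens{0} = \frac{\sqrt{\mu}-\sqrt{\lambda}}{4\pi}$ of the continuous representative is the right (and only nontrivial) computation. The paper leaves this proof implicit, and your write-up supplies precisely the missing bookkeeping.
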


\subsection{The energy space $W^{1, 2}_\alpha$}
\label{intro:energy-space}

The form domain associated with $\op$ is given by
\[
\Dom \brackets{\op}
=
\set*{
	\phi_\lambda + q G_\lambda:
	\parens{\lambda, \phi_\lambda, q} \in \ooi{0, \infty} \times W^{1, 2} \times \complex
}
\subset
L^2
\]
(see \cite[p. 62--63]{galloneSelfAdjointExtensionSchemes2023}). In view of Corollary \ref{cor:nonuniqueness}, functions in $\Dom \brackets{\op}$ can also be written in infinitely different ways in function of $\lambda \in \ooi{0, \infty}$. Given
\[
u = \phi_\lambda + q_u G_\lambda
\in
\Dom \brackets{\op}
\quad \text{and} \quad
v = \psi_\lambda + q_v G_\lambda
\in
\Dom \brackets{\op},
\]
let
\begin{align*}
S_\alpha \parens{u, v}
&=
\angles{\phi_\lambda \mid \psi_\lambda}_{\dot{W}^{1, 2}}
+
\lambda
\parens*{\angles{\phi_\lambda \mid \psi_\lambda}_{L^2} - \angles{u \mid v}_{L^2}}
+
\parens*{\alpha + \frac{\sqrt{\lambda}}{4 \pi}} q_u \overline{q_v};
\\
&=
\angles{\phi_\lambda \mid \psi_\lambda}_{\dot{W}^{1, 2}}
-
\lambda
\angles{\overline{q_v} \phi_\lambda + q_u \overline{\psi_\lambda} \mid G_\lambda}_{L^2}
+
\parens*{\alpha + \frac{\sqrt{\lambda}}{8 \pi}} q_u \overline{q_v}.
\end{align*}
It follows from Lemma \ref{lem:S_alpha-is-well-defined} that this definition gives us a well-defined sesquilinear form
\[S_\alpha \colon \Dom \brackets{\op} \times \Dom \brackets{\op} \to \complex.\]

Let $H_\alpha \colon \Dom \brackets{\op} \to \real$ be defined as
\[
H_\alpha \parens{u}
=
S_\alpha \parens{u, u}
=
\norm{\phi_\lambda}_{\dot{W}^{1, 2}}^2
+
\lambda \parens*{\norm{\phi_\lambda}_{L^2}^2 - \norm{u}_{L^2}^2}
+
\parens*{\alpha + \frac{\sqrt{\lambda}}{4 \pi}} \abs{q}^2
\]
for every $u = \phi_\lambda + q G_\lambda \in \Dom \brackets{\op}$, i.e., $H_\alpha$ denotes the extension of
\[\Dom \parens{\op} \ni u \mapsto \angles{\op u \mid u}_{L^2} \in \real\]
to $\Dom \brackets{\op}$. Notice that
\[
H_\alpha \parens{\phi} = \norm{\phi}_{\dot{W}^{1, 2}}^2
\quad \text{for every} \quad
\phi \in W^{1, 2}.
\]

The following result states a few properties of $S_\alpha$ and $H_\alpha$ that follow from the nonuniqueness of representation of elements in $\Dom \brackets{\op}$.

\begin{lem}
\label{lem:H_alpha}
Suppose that $\alpha \in \real$. It holds that
\[
H_\alpha \parens{u}
=
\norm{\phi}_{\dot{W}^{1, 2}}^2
+
\frac{\abs{q}^4}{\parens{8 \pi \norm{u}_{L^2}}^2} \parens*{
	1 + \frac{\norm{\phi}_{L^2}^2}{\norm{u}_{L^2}^2}
}
+
\alpha \abs{q}^2
\]
for every
\[
u
:=
\phi + q G_{\eps \abs{q}^4 / \norm{u}_{L^2}^4}
\in
\Dom \brackets{\op} \setminus W^{1, 2},
\]
where $\eps := \frac{1}{\parens{8 \pi}^2}$. If we suppose further that
$\alpha \in \coi{0, \infty}$, then
\begin{enumerate}
\item
$H_\alpha \parens{u} \geq 0$ for every $u \in \Dom \brackets{\op}$;
\item
$H_\alpha \parens{u} = 0$ if, and only if, $u \equiv 0$;
\item
$S_\alpha$ is an inner product.
\end{enumerate}
\end{lem}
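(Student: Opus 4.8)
The plan is to first establish the displayed identity by a direct substitution and then read off the three numbered claims from it; the only genuinely delicate points, as opposed to bookkeeping, are justifying the self-referential choice of $\lambda$ and the final step of item 2.

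Fix $u = \phi + q G_\lambda \in \Dom\brackets{\op}\setminus W^{1,2}$ with $\lambda := \eps\abs{q}^4/\norm{u}_{L^2}^4$. I would begin by checking that this is a legitimate representation of $u$: the charge $q$ is independent of the base point by Corollary \ref{cor:nonuniqueness}, and $u \neq 0$ (otherwise $u \in W^{1,2}$) forces $\norm{u}_{L^2} > 0$, so $\lambda$ is a well-defined element of $\ooi{0,\infty}$ and $\phi := u - q G_\lambda \in W^{1,2}$ is then determined. The substitution itself is purely algebraic once one records $\sqrt\lambda = \abs{q}^2/\parens{8\pi\norm{u}_{L^2}^2}$: the term $\lambda\norm{\phi}_{L^2}^2$ equals $\frac{\abs{q}^4}{\parens{8\pi\norm{u}_{L^2}}^2}\cdot\frac{\norm{\phi}_{L^2}^2}{\norm{u}_{L^2}^2}$, whereas the key cancellation is
\[
-\lambda\norm{u}_{L^2}^2 + \frac{\sqrt\lambda}{4\pi}\abs{q}^2
=
-\frac{\abs{q}^4}{\parens{8\pi\norm{u}_{L^2}}^2} + \frac{2\abs{q}^4}{\parens{8\pi\norm{u}_{L^2}}^2}
=
\frac{\abs{q}^4}{\parens{8\pi\norm{u}_{L^2}}^2};
\]
this is precisely where the constant $\eps = 1/\parens{8\pi}^2$ is needed to produce the clean factor $1$ in the bracket. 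Adding the untouched $\norm{\phi}_{\dot{W}^{1,2}}^2$ and $\alpha\abs{q}^2$ gives the asserted formula.

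Now assume $\alpha \in \coi{0,\infty}$. Item 1 is then immediate from the formula: for $u \notin W^{1,2}$ each of the three summands is nonnegative (the middle one is a product of nonnegative factors and $\alpha\abs{q}^2 \geq 0$), while for $u \in W^{1,2}$ one has $H_\alpha(u) = \norm{u}_{\dot{W}^{1,2}}^2 \geq 0$ as noted before the lemma. For item 2, $u \equiv 0$ trivially yields $H_\alpha(u) = 0$. Conversely, if $H_\alpha(u) = 0$ and $u \notin W^{1,2}$, then $q \neq 0$ and $\norm{u}_{L^2} > 0$, so the middle summand $\frac{\abs{q}^4}{\parens{8\pi\norm{u}_{L^2}}^2}$ is strictly positive, contradicting $H_\alpha(u) = 0$; hence $u \in W^{1,2}$, and then $\norm{u}_{\dot{W}^{1,2}}^2 = H_\alpha(u) = 0$. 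Since $u \in L^2$ has vanishing gradient it is a.e. constant, and the only constant lying in $L^2$ is $0$, so $u \equiv 0$.

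Finally, for item 3, $S_\alpha$ is sesquilinear by Lemma \ref{lem:S_alpha-is-well-defined}, and its positive definiteness is exactly items 1 and 2, since $S_\alpha(u,u) = H_\alpha(u)$. It remains to verify the Hermitian symmetry $S_\alpha(v,u) = \overline{S_\alpha(u,v)}$, which I would obtain by conjugating the defining expression term by term after evaluating $u$ and $v$ in a common base point $\lambda$ (permissible because $S_\alpha$ is representation independent): $\alpha$ and $\lambda$ are real, the $\dot{W}^{1,2}$- and $L^2$-inner products are conjugate symmetric, and the charge term transforms correctly under conjugation. The main thing to be careful about throughout is the arithmetic in the cancellation above and the fact that $u \notin W^{1,2}$ is equivalent to $q \neq 0$, which is what lets the two cases in items 1 and 2 be separated cleanly.
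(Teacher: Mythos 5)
Your proof is correct, and it fills in exactly the computation the paper leaves implicit: the paper states Lemma \ref{lem:H_alpha} without proof, treating it as a direct consequence of the definition of $H_\alpha$, and your substitution $\sqrt{\lambda} = \abs{q}^2 / \parens{8 \pi \norm{u}_{L^2}^2}$ with the cancellation $- \lambda \norm{u}_{L^2}^2 + \frac{\sqrt{\lambda}}{4 \pi} \abs{q}^2 = \frac{\abs{q}^4}{\parens{8 \pi \norm{u}_{L^2}}^2}$ is the intended argument. The points you flag as delicate — that $q$ and $\norm{u}_{L^2}$ are representation-independent so the self-referential choice of $\lambda$ is legitimate, that $u \notin W^{1, 2}$ is equivalent to $q \neq 0$, and the Hermitian symmetry needed for item 3 — are handled correctly.
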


In this context, we define the \emph{energy space} $W^{1, 2}_\alpha$ as the following inner product space:
\[
W^{1, 2}_\alpha
=
\parens*{\Dom \brackets{\op}, \angles{\cdot \mid \cdot}_{W^{1, 2}_\alpha}},
\]
where
\[
\angles{u \mid v}_{W^{1, 2}_\alpha}
:=
\angles{u \mid v}_{L^2} + S_\alpha \parens{u, v}.
\]
Actually, $W^{1, 2}_\alpha$ is a Hilbert space due to structural properties of lower semi-bounded self-adjoint
extensions of symmetric operators (see \cite[Chapter 2]{galloneSelfAdjointExtensionSchemes2023}).

Let us comment on the embeddings of $W^{1, 2}_\alpha$. It is clear that
\[W^{1, 2} \hookrightarrow W^{1, 2}_\alpha \hookrightarrow L^2.\]
In view of \eqref{eqn:integrability-of-G_lambda}, we deduce that the following Sobolev-type embeddings hold:
\[
W^{1, 2}_\alpha \hookrightarrow L^r
\quad \text{when} \quad
2 \leq r < 3.
\]

We are mostly interested in real-valued nonlinear functionals on $W^{1, 2}_\alpha$, so we introduce a notion of boundedness of derivatives for these functionals as follows.

\begin{defn}
Suppose that $T_\alpha \colon W^{1, 2}_\alpha \to \real$ is differentiable. We say that its derivative,
$T_\alpha' \colon W^{1, 2}_\alpha \to \mathcal{L} \parens{W^{1, 2}_\alpha, \real}$,
is \emph{bounded} when the following implication is satisfied: if $U$ is a bounded subset of $W^{1, 2}_\alpha$, then
$\set{T' \parens{u}}_{u \in U}$ is a bounded subset of
$\mathcal{L} \parens{W^{1, 2}_\alpha, \real}$.
\end{defn}

\subsection{Considered problem}
\label{intro:AMP}

\subsubsection{The abstract minimization problem}
\label{intro:AMP:1}

Suppose that
$T_\alpha \colon W^{1, 2}_\alpha \to \real$ is a nonlinear functional such that
\begin{equation}
\label{intro:eqn:T_1}
T_\alpha \in C^1 \parens{W^{1, 2}_\alpha, \real}
\end{equation}
and
\begin{equation}
\label{intro:eqn:T_2}
T_\alpha' \colon W^{1, 2}_\alpha \to \mathcal{L} \parens{W^{1, 2}_\alpha, \real}
\quad \text{is bounded}.
\end{equation}
Consider the nonlinear functional
$I_\alpha \colon W^{1, 2}_\alpha \to \real$ defined as
\[
I_\alpha \parens{u}
=
\frac{1}{2} H_\alpha \parens{u} + T_\alpha \parens{u}.
\]
The goal of this paper is to establish conditions under which the following \emph{abstract minimization problem} admits a solution:
\begin{equation}
\label{eqn:AMP}
\begin{cases}
I_\alpha \parens{u}
=
\I_\alpha \parens{\rho^2}
:=
\inf_{W^{1, 2}_\alpha \parens{\rho^2}} I_\alpha;
\\
u \in W^{1, 2}_\alpha \parens{\rho^2},
\end{cases}
\end{equation}
where $W^{1, 2}_\alpha \parens{\rho^2} := \set{u \in W^{1, 2}_\alpha: \norm{u}_{L^2}^2 = \rho^2}$.

\subsubsection{An auxiliary minimization problem}

It will be useful to also consider a problem related to \eqref{eqn:AMP}. The nonlinear functional $I_\alpha$ extends the functional $I \colon W^{1, 2} \to \real$ defined as
\[
I \parens{\phi}
=
\frac{1}{2} \norm{\phi}_{\dot{W}^{1, 2}}^2
+
T \parens{\phi},
\]
where $T := T_\alpha|_{W^{1, 2}}$. As such, we can analogously consider the following \emph{auxiliary minimization problem}:
\begin{equation}
\label{eqn:AMP:2}
\begin{cases}
I \parens{\phi}
=
\I \parens{\rho^2}
:=
\inf_{W^{1, 2} \parens{\rho^2}} I; \\
\phi \in W^{1, 2} \parens{\rho^2},
\end{cases}
\end{equation}
where
$
W^{1, 2} \parens{\rho^2}
:=
\set{\phi \in W^{1, 2}: \norm{\phi}_{L^2}^2 = \rho^2}
$.

\subsection{Existence of solution to \eqref{eqn:AMP}}
\label{intro:existence-of-soln}

\subsubsection{How may compactness fail?}
Suppose that $\parens{u_n}_{n \in \nat}$ is a minimizing sequence of
$I_\alpha|_{W^{1, 2}_\alpha \parens{\rho^2}}$ and
$u_n \rightharpoonup u_\infty$ in $W^{1, 2}_\alpha$ as $n \to \infty$.
Due to the embedding $W^{1, 2}_\alpha \hookrightarrow L^2$, it also holds that
$u_n \rightharpoonup u_\infty$ in $L^2$ as $n \to \infty$.
The norm $\norm{\cdot}_{L^2}$ is weakly lower semi-continuous, so
\[\mu := \norm{u_\infty}_{L^2} \in \cci{0, \rho}.\]
We classify how the compactness of
$\parens{u_n}_{n \in \nat}$ may fail in function of the value of $\mu$ as follows:
\begin{itemize}
\item
if $\mu = 0$, then we say that \emph{vanishing} occurs;
\item
if $0 < \mu < \rho$, then we say that \emph{dichotomy} occurs.
\end{itemize}
In this context, our main results list conditions under which we can rule out vanishing, rule out dichotomy and ensure that
$I_\alpha \parens{u_\infty} = \I_\alpha \parens{\rho^2}$, from which we deduce the existence of a solution to \eqref{eqn:AMP}.

\subsubsection{Avoiding vanishing}
The following result contains conditions to rule out vanishing and was developed inspired by the arguments in Adami, Boni, Carlone \& Tentarelli's \cite{adamiExistenceStructureRobustness2022, adamiGroundStatesPlanar2022}.

\begin{lem}
\label{lem:vanishing}
Suppose that $\rho \in \ooi{0, \infty}$ is such that
\begin{equation}
\label{cond:vanishing:1}
\I_\alpha \parens{\rho^2} < \I \parens{\rho^2},
\end{equation}
\begin{equation}
\label{cond:vanishing:2}
I_\alpha|_{W^{1, 2}_\alpha \parens{\rho^2}} ~ \text{is coercive}
\end{equation}
and let $\parens{u_n = \phi_{\lambda, n} + q_n G_\lambda}_{n \in \nat}$ be a minimizing sequence of $I_\alpha|_{W^{1, 2}_\alpha \parens{\rho^2}}$. Then
$\liminf_{n \to \infty} \abs{q_n} > 0$. If we suppose further that
\[
u_n
\xrightharpoonup[n \to \infty]{W^{1, 2}_\alpha}
u_\infty
=
\phi_{\lambda, \infty} + q_\infty G_\lambda,
\]
then $q_\infty \neq 0$.
\end{lem}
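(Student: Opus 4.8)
The plan is to establish the first assertion by contradiction, after extracting boundedness from the coercivity hypothesis. Since $\parens{u_n}$ is minimizing, $I_\alpha \parens{u_n}$ is bounded, so \eqref{cond:vanishing:2} forces $\parens{u_n}$ to be bounded in $W^{1, 2}_\alpha$; in particular $H_\alpha \parens{u_n}$ is bounded. I would then invoke the fact that the charge functional $u \mapsto q_u$ is a bounded linear functional on $W^{1, 2}_\alpha$: for fixed $\lambda$, the assignment $\parens{\phi, q} \mapsto \phi + q G_\lambda$ is a bounded linear bijection $W^{1, 2} \times \complex \to W^{1, 2}_\alpha$ (surjective by Corollary \ref{cor:nonuniqueness}, injective because $G_\lambda \notin W^{1, 2}$), so the bounded inverse theorem makes $u \mapsto q_u$ continuous. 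Hence $\parens{q_n}$ is bounded; since $\phi_{\lambda, n} = u_n - q_n G_\lambda$ and $G_\lambda \in L^2$, the sequence $\parens{\phi_{\lambda, n}}$ is bounded in $L^2$, and feeding the boundedness of $H_\alpha \parens{u_n}$ and $\parens{q_n}$ into the formula for $H_\alpha$ shows $\norm{\phi_{\lambda, n}}_{\dot{W}^{1, 2}}$ is bounded as well, so $\parens{\phi_{\lambda, n}}$ is bounded in $W^{1, 2}$.

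Now suppose, for contradiction, that $\liminf_n \abs{q_n} = 0$ and pass to a subsequence (not relabelled) with $q_n \to 0$. The key observation is that as $q_n \to 0$ the function $u_n$ collapses onto its regular part $\phi_{\lambda, n} \in W^{1, 2}$, and $I_\alpha \parens{u_n}$ collapses onto $I \parens{\phi_{\lambda, n}}$. Indeed, in the expression for $H_\alpha \parens{u_n}$ the charge term $\parens*{\alpha + \frac{\sqrt{\lambda}}{4 \pi}} \abs{q_n}^2$ vanishes in the limit, and since $\norm{u_n}_{L^2} = \rho$ together with $\norm{q_n G_\lambda}_{L^2} \to 0$ forces $\norm{\phi_{\lambda, n}}_{L^2} \to \rho$, the term $\lambda \parens*{\norm{\phi_{\lambda, n}}_{L^2}^2 - \rho^2}$ vanishes too, so $\frac{1}{2} H_\alpha \parens{u_n} = \frac{1}{2} \norm{\phi_{\lambda, n}}_{\dot{W}^{1, 2}}^2 + o \parens{1}$. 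For the nonlinear part, $\norm{u_n - \phi_{\lambda, n}}_{W^{1, 2}_\alpha} = \abs{q_n} \norm{G_\lambda}_{W^{1, 2}_\alpha} \to 0$, and the boundedness of $T_\alpha'$ on the bounded segment joining $\phi_{\lambda, n}$ to $u_n$ gives $T_\alpha \parens{u_n} - T \parens{\phi_{\lambda, n}} = T_\alpha \parens{u_n} - T_\alpha \parens{\phi_{\lambda, n}} \to 0$. Altogether $I_\alpha \parens{u_n} = I \parens{\phi_{\lambda, n}} + o \parens{1}$.

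It remains to push $\phi_{\lambda, n}$ onto the constraint. I would set $\tilde{\phi}_n := \frac{\rho}{\norm{\phi_{\lambda, n}}_{L^2}} \phi_{\lambda, n} \in W^{1, 2} \parens{\rho^2}$; since the rescaling factor tends to $1$ and $\parens{\phi_{\lambda, n}}$ is bounded in $W^{1, 2}$, we have $\norm{\tilde{\phi}_n - \phi_{\lambda, n}}_{W^{1, 2}} \to 0$, and the local Lipschitz continuity of $I$ on bounded sets (inherited from the boundedness of $T'$) yields $I \parens{\tilde{\phi}_n} = I \parens{\phi_{\lambda, n}} + o \parens{1}$. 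Consequently
\[
\I \parens{\rho^2}
\leq
I \parens{\tilde{\phi}_n}
=
I_\alpha \parens{u_n} + o \parens{1}
\xrightarrow[n \to \infty]{}
\I_\alpha \parens{\rho^2},
\]
so $\I \parens{\rho^2} \leq \I_\alpha \parens{\rho^2}$, contradicting \eqref{cond:vanishing:1}. This proves $\liminf_n \abs{q_n} > 0$.

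For the final assertion, the charge functional is continuous and linear, hence weakly continuous, so $u_n \rightharpoonup u_\infty$ in $W^{1, 2}_\alpha$ forces $q_n \to q_\infty$, whence $\abs{q_\infty} = \lim_n \abs{q_n} = \liminf_n \abs{q_n} > 0$ and thus $q_\infty \neq 0$. The main obstacle I anticipate is the bookkeeping of the second paragraph, namely verifying that every term separating $I_\alpha \parens{u_n}$ from $I \parens{\phi_{\lambda, n}}$ is genuinely $o \parens{1}$; this hinges on securing the boundedness of $\parens{\phi_{\lambda, n}}$ in $W^{1, 2}$ and the boundedness of $T_\alpha'$ \emph{before} passing to the limit, which is exactly why the coercivity hypothesis and the definition of bounded derivative are brought in at the outset.
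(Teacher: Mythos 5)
Your proof is correct and follows essentially the same route as the paper: extract boundedness of the regular parts from coercivity, show that when $q_n \to 0$ the energy $I_\alpha \parens{u_n}$ is asymptotic to $I$ evaluated at the $L^2$-rescaled regular part $\frac{\rho}{\norm{\phi_{\lambda, n}}_{L^2}} \phi_{\lambda, n}$ (your $\tilde{\phi}_n$ is exactly the paper's $\xi_n$), and conclude $\I \parens{\rho^2} \leq \I_\alpha \parens{\rho^2}$, contradicting \eqref{cond:vanishing:1}. The only cosmetic differences are that you pass through the intermediate comparison with $I \parens{\phi_{\lambda, n}}$ before rescaling, and that you justify the continuity of $u \mapsto q_u$ via the bounded inverse theorem, a fact the paper asserts without proof just before Lemma \ref{lem:weak-convergence}.
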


It may be difficult to verify that \eqref{cond:vanishing:1} is satisfied. In practice, one is often interested in functionals $T_\alpha$ for which given
$v \in W^{1, 2}_\alpha$, there exists $z_v \in L^2$ such that
\begin{equation}
\label{eqn:form-of-T_alpha'}
T_\alpha' \parens{v} \brackets{u}
=
\Re \brackets*{\int \overline{z_v \parens{x}} u \parens{x} \dif x}
\quad \text{for every} \quad
u \in W^{1, 2}_\alpha,
\end{equation}
which is the case for
$T_\alpha \parens{u} := \frac{1}{p} \norm{u}_{L^p}^p$ with $p \in \coi{2, \frac{5}{2}}$.
In this context, the following result inspired in Boni \& Gallone's \cite[Appendix B]{boniTwoDimensionalNLS2025} gives a sufficient condition that is verified in many problems of practical interest and which implies \eqref{cond:vanishing:1}.

\begin{lem}
\label{lem:calI_alpha<calI}
Suppose that $v \in W^{2, 2}$ solves \eqref{eqn:AMP:2}, $v \parens{0} \neq 0$ and there exists $z_v \in L^2$ such that \eqref{eqn:form-of-T_alpha'} holds. Then \eqref{cond:vanishing:1} is satisfied.
\end{lem}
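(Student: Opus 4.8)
The plan is to show that the regular minimizer $v$, although it attains $\I\parens{\rho^2}$ and belongs to $W^{1, 2}_\alpha \parens{\rho^2}$ (so that trivially $\I_\alpha \parens{\rho^2} \leq I_\alpha \parens{v} = \I \parens{\rho^2}$), fails to be a \emph{constrained critical point} of $I_\alpha$ on the enlarged sphere $W^{1, 2}_\alpha \parens{\rho^2}$, the obstruction being exactly the value $v \parens{0} \neq 0$. Concretely, I would produce a $C^1$ curve $t \mapsto \gamma \parens{t} \in W^{1, 2}_\alpha \parens{\rho^2}$ with $\gamma \parens{0} = v$ along which $I_\alpha$ strictly decreases; this immediately yields \eqref{cond:vanishing:1}. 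Throughout I use that $I_\alpha \in C^1 \parens{W^{1, 2}_\alpha, \real}$ (from \eqref{intro:eqn:T_1} and the fact that $H_\alpha \parens{u} = \norm{u}_{W^{1, 2}_\alpha}^2 - \norm{u}_{L^2}^2$ is a bounded quadratic form) and that $I_\alpha'\parens{v}$ agrees with $I'\parens{v}$ on $W^{1, 2}$.

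First I would record the Euler--Lagrange equation for $v$. Since $v$ minimizes $I$ under the mass constraint and $T'\parens{v} = T_\alpha'\parens{v}|_{W^{1, 2}}$ is represented by $z_v$ via \eqref{eqn:form-of-T_alpha'}, the Lagrange multiplier rule gives $\omega \in \real$ with $- \Delta v + z_v = \omega v$ in $L^2$ (here $v \in W^{2, 2}$ makes $- \Delta v \in L^2$, and testing against the $L^2$-dense family $W^{1, 2}$ upgrades the weak identity to an $L^2$ one). Next I would compute the first variation of $I_\alpha$ at $v$ in the singular direction $q_0 G_\lambda$, $q_0 \in \complex$. Choosing the representation $v + t q_0 G_\lambda = \phi_\lambda + q G_\lambda$ with $\phi_\lambda = v$, $q = t q_0$, differentiating the explicit expression for $H_\alpha$ in real $t$ produces the cross term $- \lambda \Re \parens*{q_0 \angles{v \mid G_\lambda}_{L^2}}$, while \eqref{eqn:form-of-T_alpha'} gives $T_\alpha'\parens{v} \brackets{q_0 G_\lambda} = \Re \parens*{q_0 \angles{z_v \mid G_\lambda}_{L^2}}$; hence
\[
I_\alpha'\parens{v} \brackets{q_0 G_\lambda} = - \lambda \Re \parens*{q_0 \angles{v \mid G_\lambda}_{L^2}} + \Re \parens*{q_0 \angles{z_v \mid G_\lambda}_{L^2}}.
\]

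The crucial step is the elimination of the $\lambda$-dependent terms. Using \eqref{eqn:Green's-fct} and $v \in W^{2, 2} \hookrightarrow C^{0, \frac{1}{2}}$, the pairing of $v$ against $\delta_0$ yields the Green's-function identity $\angles{- \Delta v \mid G_\lambda}_{L^2} + \lambda \angles{v \mid G_\lambda}_{L^2} = \overline{v \parens{0}}$ (both sides being continuous functionals of $v \in W^{2, 2}$ that agree on the dense subspace $C_c^\infty$). Substituting $z_v = \omega v + \Delta v$ from the Euler--Lagrange equation gives $\angles{z_v \mid G_\lambda}_{L^2} = \parens{\omega + \lambda} \angles{v \mid G_\lambda}_{L^2} - \overline{v \parens{0}}$, so the $\lambda$-terms cancel and $I_\alpha'\parens{v} \brackets{q_0 G_\lambda} = \omega \Re \parens*{q_0 \angles{v \mid G_\lambda}_{L^2}} - \Re \parens*{q_0 \overline{v \parens{0}}}$. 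Finally I would take the mass-normalized curve $\gamma \parens{t} = \rho \norm{v + t q_0 G_\lambda}_{L^2}^{-1} \parens{v + t q_0 G_\lambda}$ with the choice $q_0 = v \parens{0}$; differentiating and using $I_\alpha'\parens{v} \brackets{v} = \omega \rho^2$ (from the same relation), the normalization contributes precisely $- \omega \Re \parens*{q_0 \angles{v \mid G_\lambda}_{L^2}}$, cancelling the remaining $\omega$-term and leaving
\[
\left. \frac{\dif}{\dif t} \right|_{t = 0} I_\alpha \parens*{\gamma \parens{t}} = - \Re \parens*{q_0 \overline{v \parens{0}}} = - \abs{v \parens{0}}^2 < 0.
\]
Thus $I_\alpha \parens*{\gamma \parens{t}} < I_\alpha \parens{v} = \I \parens{\rho^2}$ for all small $t > 0$, which proves \eqref{cond:vanishing:1}.

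I expect the main obstacle to be the rigorous justification of the Green's-function identity together with the careful bookkeeping of complex conjugates and real parts, so that the choice $q_0 = v \parens{0}$ produces the manifestly negative quantity $- \abs{v \parens{0}}^2$; once the $\delta_0$-pairing is secured through $v \in W^{2, 2}$ and Morrey's embedding, the two cancellations---of the $\lambda$-terms via the Green identity and of the $\omega$-term via the mass normalization---form the heart of the argument. It is worth emphasizing that the resulting descent rate is independent of both $\alpha$ and $\lambda$: the repulsive contribution $\parens*{\alpha + \frac{\sqrt{\lambda}}{4 \pi}} \abs{q}^2$ enters only at second order in $t$, so the first-order gain persists throughout the nonattractive regime $\alpha \in \coi{0, \infty}$.
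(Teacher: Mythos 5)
Your proposal is correct, and its computational core coincides with the paper's: both arguments rest on the Euler--Lagrange equation $- \Delta v + z_v + \omega v = 0$ in $L^2$ obtained from the Lagrange multiplier rule, and on the Green's-function pairing $\angles{- \Delta v + \lambda v \mid G_\lambda}_{L^2} = \overline{v \parens{0}}$, which is exactly Lemma \ref{lem:symmetric}. The difference is in the packaging. The paper argues by contradiction: if $\I_\alpha \parens{\rho^2} = \I \parens{\rho^2}$, then $v$ is a constrained critical point of $I_\alpha$ on the full sphere $W^{1, 2}_\alpha \parens{\rho^2}$, so testing the multiplier identity against $u = G_\lambda$ and invoking Lemma \ref{lem:symmetric} forces $v \parens{0} = 0$. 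You instead run the same first-variation computation forwards: you build the mass-normalized curve $\gamma$ through $v$ in the singular direction $q_0 G_\lambda$, verify the two cancellations (of the $\lambda$-terms via the Green identity and of the $\omega$-term via the normalization, using $I_\alpha' \parens{v} \brackets{v} = \omega \rho^2$), and land on the explicit derivative $- \abs{v \parens{0}}^2 < 0$, which produces a genuine competitor in $W^{1, 2}_\alpha \parens{\rho^2}$ with energy strictly below $\I \parens{\rho^2}$. I checked the bookkeeping of conjugates and real parts with the paper's convention (inner products linear in the second entry) and the derivative of $H_\alpha$ along your representation $\phi_\lambda = v$, $q = t q_0$; both are right. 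What your route buys is a direct, quantitative descent statement that never needs $v$ to be a critical point of $I_\alpha$ on the larger sphere; what the paper's route buys is brevity, since the contradiction setup lets it quote Lemma \ref{lem:symmetric} immediately without tracking the normalization term. Either version is a complete proof of \eqref{cond:vanishing:1}.
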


These lemmas are proved in Section \ref{vanishing}.

\subsubsection{Avoiding dichotomy}
In \cite[Section I.2]{lionsConcentrationcompactnessPrincipleCalculus1984}, Lions proved that under the hypothesis that
$I \colon W^{1, 2} \to \real$ is \emph{invariant by translation} in the sense that
\[
I \parens{u} = I \parens*{u \parens{\cdot + x}}
\quad \text{for every} \quad
u \in W^{1, 2} \quad \text{and} \quad x \in \real^3,
\]
the following equivalence holds: minimizing sequences of
$I|_{W^{1, 2} \parens{r^2}}$ are relatively compact up to translations if, and only if, the \emph{strict sub-additivity condition} 
\[
\I \parens{r^2}
<
\I \parens{\mu^2} + \I \parens{r^2 - \mu^2}
\quad \text{for every} \quad
\mu \in \ooi{0, r}
\]
is satisfied.

This idea was used by Bellazzini \& Siciliano in \cite{bellazziniScalingPropertiesFunctionals2011} to introduce a framework to rule out dichotomy of minimizing sequences of mass-constrained functionals which are invariant by translation in abstract Hilbert subspaces of $L^2 \parens{\real^N}$ (see also \cite{bellazziniStableStandingWaves2011}).

Even though we are interested in minimization problems which are not invariant by translation, we can still consider the strict sub-additivity condition in the present context and adapt Bellazzini \& Siciliano's \cite[Lemma 2.1]{bellazziniScalingPropertiesFunctionals2011} as follows, which marks the beginning of our strategy to avoid dichotomy.

\begin{lem}
\label{dich:lem}
Suppose that $r \in \ooi{0, \infty}$ is such that
\begin{equation}
\label{eqn:SSC}
\I_\alpha \parens{r^2}
<
\I_\alpha \parens{\mu^2} + \I_\alpha \parens{r^2 - \mu^2}
\quad \text{for every} \quad
\mu \in \ooi{0, r}.
\end{equation}
Suppose further that there exist $\set{u_n}_{n \in \nat} \subset W^{1, 2} \parens{r^2}$ and $u_\infty \in W^{1, 2}_\alpha \setminus \set{0}$ such that
\begin{itemize}
\item
$\parens{u_n}_{n \in \nat}$ is a Palais--Smale sequence of
$I_\alpha|_{W^{1, 2}_\alpha \parens{r^2}}$,
\item
$\parens{u_n}_{n \in \nat}$ is a minimizing sequence of
$I_\alpha|_{W^{1, 2}_\alpha \parens{r^2}}$,
\item
$u_n \rightharpoonup u_\infty$ in $W^{1, 2}_\alpha$ as $n \to \infty$
\end{itemize}
and the following conditions are satisfied:
\begin{equation}
\label{eqn:dichotomy:T_1}
T_\alpha \parens{u_\infty}
\leq
\liminf_{n \to \infty}
\parens*{T_\alpha \parens{u_n} - T_\alpha \parens{u_n - u_\infty}},
\end{equation}
\begin{equation}
\label{eqn:dichotomy:T_2}
T_\alpha \parens*{\delta_n \parens{u_n - u_\infty}}
-
T_\alpha \parens{u_n - u_\infty}
\xrightarrow[n \to \infty]{}
0
\quad
\parens*{
\delta_n
:=
\frac{\sqrt{r^2 - \norm{u_\infty}_{L^2}^2}}{\norm{u_n - u_\infty}_{L^2}}
}
\end{equation}
and
\begin{equation}
\label{eqn:dichotomy:T_3}
\liminf_{n, m \to \infty}
\parens*{T_\alpha' \parens{u_n} - T_\alpha' \parens{u_m}} \brackets{u_n - u_m}
\geq
0.
\end{equation}
Then, up to subsequence,
$\norm{u_n - u_\infty}_{W^{1, 2}_\alpha} \to 0$ as $n \to \infty$. In particular, $u_\infty$ solves the abstract minimization problem
\begin{equation}
\label{eqn:AMP:3}
\begin{cases}
I_\alpha \parens{u}
=
\I_\alpha \parens{r^2};
\\
u \in W^{1, 2}_\alpha \parens{r^2},
\end{cases}
\end{equation}
(that is, \eqref{eqn:AMP} in the case $\rho = r$).
\end{lem}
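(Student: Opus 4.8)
The plan is to run a Brezis--Lieb decomposition adapted to the quadratic form $H_\alpha$, combine it with the strict sub-additivity \eqref{eqn:SSC} to force $\norm{u_\infty}_{L^2} = r$ (i.e. $L^2$-compactness), and finally upgrade to norm convergence in $W^{1, 2}_\alpha$ using the Palais--Smale structure together with \eqref{eqn:dichotomy:T_3}. Write $v_n := u_n - u_\infty$, so that $v_n \rightharpoonup 0$ in $W^{1, 2}_\alpha$ and hence in $L^2$, and set $\mu := \norm{u_\infty}_{L^2}$, which satisfies $0 < \mu \leq r$ since $u_\infty \neq 0$ and $\norm{\cdot}_{L^2}$ is weakly lower semi-continuous. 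Because $W^{1, 2}_\alpha$ is a Hilbert space with $\norm{w}_{W^{1, 2}_\alpha}^2 = \norm{w}_{L^2}^2 + H_\alpha \parens{w}$, expanding $\norm{v_n + u_\infty}_{L^2}^2$ and $H_\alpha \parens{v_n + u_\infty}$ and discarding the cross terms (which vanish because $w \mapsto \angles{u_\infty \mid w}_{L^2}$ and $w \mapsto S_\alpha \parens{u_\infty, w}$ are bounded linear functionals and $v_n \rightharpoonup 0$) gives
\[
\norm{v_n}_{L^2}^2 = r^2 - \mu^2 + o \parens{1} \quad \text{and} \quad H_\alpha \parens{u_n} = H_\alpha \parens{v_n} + H_\alpha \parens{u_\infty} + o \parens{1}.
\]

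Next I would rule out dichotomy. Suppose, for contradiction, that $\mu < r$, so $\mu \in \ooi{0, r}$. Writing $T_\alpha \parens{u_n} = T_\alpha \parens{v_n} + \parens{T_\alpha \parens{u_n} - T_\alpha \parens{v_n}}$ and combining the $H_\alpha$-splitting above, the super-additivity of $\liminf$, and \eqref{eqn:dichotomy:T_1} yields $\I_\alpha \parens{r^2} = \lim_{n \to \infty} I_\alpha \parens{u_n} \geq I_\alpha \parens{u_\infty} + \liminf_{n \to \infty} I_\alpha \parens{v_n}$. Since $u_\infty \in W^{1, 2}_\alpha \parens{\mu^2}$, the first term is at least $\I_\alpha \parens{\mu^2}$. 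For the second, I would rescale $v_n$ onto the correct sphere: with $\delta_n$ as in \eqref{eqn:dichotomy:T_2} one has $\delta_n v_n \in W^{1, 2}_\alpha \parens{r^2 - \mu^2}$ and $\delta_n \to 1$, so from the quadratic homogeneity of $H_\alpha$, the identity
\[
I_\alpha \parens{\delta_n v_n} - I_\alpha \parens{v_n} = \tfrac{1}{2} \parens{\delta_n^2 - 1} H_\alpha \parens{v_n} + \parens{T_\alpha \parens{\delta_n v_n} - T_\alpha \parens{v_n}},
\]
the boundedness of $H_\alpha \parens{v_n}$, and \eqref{eqn:dichotomy:T_2}, we get $I_\alpha \parens{v_n} = I_\alpha \parens{\delta_n v_n} + o \parens{1} \geq \I_\alpha \parens{r^2 - \mu^2} + o \parens{1}$. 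Hence $\I_\alpha \parens{r^2} \geq \I_\alpha \parens{\mu^2} + \I_\alpha \parens{r^2 - \mu^2}$, contradicting \eqref{eqn:SSC}. Therefore $\mu = r$, and since $u_n \rightharpoonup u_\infty$ in $L^2$ with $\norm{u_n}_{L^2} = r \to \norm{u_\infty}_{L^2}$, weak convergence together with convergence of norms gives $u_n \to u_\infty$ strongly in $L^2$.

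Finally I would upgrade to convergence in $W^{1, 2}_\alpha$. As $\parens{u_n}_{n \in \nat}$ is a constrained Palais--Smale sequence on the $L^2$-sphere, there are Lagrange multipliers $\lambda_n \in \real$ with
\[
\Re S_\alpha \parens{u_n, w} + T_\alpha' \parens{u_n} \brackets{w} - \lambda_n \Re \angles{u_n \mid w}_{L^2} = o \parens{1} \norm{w}_{W^{1, 2}_\alpha}
\]
uniformly in $w \in W^{1, 2}_\alpha$. Testing with $w = u_n$ and using that weak convergence makes $\parens{u_n}$ bounded, that $H_\alpha \parens{u_n}$ is bounded, and that $T_\alpha'$ is bounded on bounded sets (by \eqref{intro:eqn:T_2}) shows that $\parens{\lambda_n}$ is bounded. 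Subtracting the relations for the indices $n$ and $m$, both tested against $w = u_n - u_m$, the quadratic part produces $H_\alpha \parens{u_n - u_m}$, the multiplier terms vanish as $n, m \to \infty$ (because $u_n \to u_\infty$ in $L^2$ and $\parens{\lambda_n}$ is bounded), and the right-hand side is $o \parens{1}$, leaving
\[
H_\alpha \parens{u_n - u_m} + \parens{T_\alpha' \parens{u_n} - T_\alpha' \parens{u_m}} \brackets{u_n - u_m} \xrightarrow[n, m \to \infty]{} 0.
\]
Since $H_\alpha \geq 0$ (Lemma \ref{lem:H_alpha}) while \eqref{eqn:dichotomy:T_3} forces the $\liminf$ of the second summand to be nonnegative, both summands tend to $0$; in particular $H_\alpha \parens{u_n - u_m} \to 0$. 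As $\norm{u_n - u_m}_{W^{1, 2}_\alpha}^2 = \norm{u_n - u_m}_{L^2}^2 + H_\alpha \parens{u_n - u_m} \to 0$, the sequence is Cauchy in $W^{1, 2}_\alpha$ and converges strongly to its weak limit $u_\infty$; continuity of $I_\alpha$ and $\norm{u_\infty}_{L^2} = r$ then show that $u_\infty$ solves \eqref{eqn:AMP:3}.

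The main obstacle is the bookkeeping in the anti-dichotomy step, specifically rescaling $v_n$ back onto the sphere $W^{1, 2}_\alpha \parens{r^2 - \mu^2}$ without perturbing the energy in the limit—which is exactly what the homogeneity of $H_\alpha$ and hypothesis \eqref{eqn:dichotomy:T_2} are designed to control—and, in the last step, confirming both that the Lagrange multipliers stay bounded and that the $L^2$ cross-terms in the subtracted Palais--Smale relations genuinely vanish, so that \eqref{eqn:dichotomy:T_3} can be converted into decay of $H_\alpha \parens{u_n - u_m}$ rather than merely a one-sided bound.
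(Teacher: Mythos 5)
Your proposal is correct and follows essentially the same route as the paper's proof (Lemma \ref{dich:lem:2}): the Brézis--Lieb splitting of $H_\alpha$ from Lemma \ref{lem:properties-of-H_alpha} combined with the $\delta_n$-rescaling and \eqref{eqn:dichotomy:T_1}--\eqref{eqn:dichotomy:T_2} to contradict \eqref{eqn:SSC}, then the constrained Palais--Smale structure plus \eqref{eqn:dichotomy:T_3} and $H_\alpha \geq 0$ to extract $H_\alpha \parens{u_n - u_m} \to 0$. The only cosmetic difference is that you dispose of the Lagrange-multiplier cross-terms using boundedness of $\parens{\lambda_n}$ together with the already-established strong $L^2$ convergence, whereas the paper passes to a subsequence along which the multipliers converge; both work.
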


A few remarks are in order.

\begin{rmk}
\label{intro:dich:rmk}
\begin{enumerate}
\item
We used $r$ in the statement of the lemma instead of $\rho$ for notational reasons that will become clear in the statement of Theorem \ref{dich:thm}.
\item
Conditions \eqref{eqn:dichotomy:T_1}--\eqref{eqn:dichotomy:T_3} depend on the set of functions $\set{u_n}_{n \in \nat} \cup \set{u_\infty} \subset W^{1, 2}_\alpha$. These conditions do not depend uniquely on the nonlinear functional
$T_\alpha \colon W^{1, 2}_\alpha \to \real$.
\item
The proof of the lemma uses the limit
\begin{equation}
\label{eqn:H_alpha-convergence}
H_\alpha \parens{u_n - u_m} \xrightarrow[n, m \to \infty]{} 0
\end{equation}
to deduce that $\norm{u_n - u_\infty}_{W^{1, 2}_\alpha} \to 0$ as $n \to \infty$, and the hypothesis $\alpha \in \coi{0, \infty}$ is essential to prove that \eqref{eqn:H_alpha-convergence} holds (see the proof of \eqref{dich:aux:-2}).
\item
For further comments on this result, see Remark \ref{dich:rmk}.
\end{enumerate}
\end{rmk}

It may be technically difficult to prove that \eqref{eqn:SSC} is satisfied. For this reason, it is useful to state an alternative result that uses Bellazzini \& Siciliano's technique of scaling paths introduced in \cite{bellazziniScalingPropertiesFunctionals2011}. We begin by defining what we mean by a scaling path.
\begin{defn}
Consider a $u \in W^{1, 2}_\alpha \setminus \set{0}$. The mapping
$g_u \colon \ooi{0, \infty} \to W^{1, 2}_\alpha$
is said to be an \emph{admissible scaling path (ASP)} of $u$ when
\begin{enumerate}
\item
$g_u$ is continuous;
\item
$g_u \parens{1} = u$;
\item
the functions
$\Theta_{g_u} \colon \ooi{0, \infty} \to \coi{0, \infty}$
and
$h_{\alpha, g_u} \colon \ooi{0, \infty} \to \real$
respectively defined as
\[
\Theta_{g_u} \parens{\theta}
=
\frac{\norm{g_u \parens{\theta}}_{L^2}^2}{\norm{u}_{L^2}^2}
\]
and
\[
h_{\alpha, g_u} \parens{\theta}
=
I_\alpha \parens*{g_u \parens{\theta}}
-
\Theta_{g_u} \parens{\theta} I_\alpha \parens{u}
\]
are differentiable;
\item
$\Theta_{g_u}' \parens{1} \neq 0$.
\end{enumerate}
\end{defn}

With this notion in mind, we developed the following result based on \cite[Theorem 2.1]{bellazziniScalingPropertiesFunctionals2011}.

\begin{thm}
\label{dich:thm}
Suppose that
\begin{enumerate}
\item
\label{dich:cond:1}
$\displaystyle \frac{\I_\alpha \parens{r^2}}{r^2} \xrightarrow[r \to 0^+]{} 0$.
\end{enumerate}
Suppose further that $\rho \in \ooi{0, \infty}$ is such that the following conditions are satisfied.
\begin{enumerate}
\setcounter{enumi}{1}
\item
\label{dich:cond:2}
$- \infty < \I_\alpha \parens{r^2} < 0$ for every $r \in \oci{0, \rho}$.
\item
\label{dich:cond:3}
The function
$\oci{0, \rho} \ni r \mapsto \I_\alpha \parens{r^2} \in \ooi{- \infty, 0}$
is continuous.
\item
\label{dich:cond:4}
For every $r \in \oci{0, \rho}$, there exist
$\set{u_n}_{n \in \nat} \subset W^{1, 2}_\alpha \parens{r^2}$
and $u_\infty \in W^{1, 2}_\alpha \setminus \set{0}$ such that
\begin{itemize}
\item
$\parens{u_n}_{n \in \nat}$ is a Palais--Smale sequence of
$I_\alpha|_{W^{1, 2}_\alpha \parens{r^2}}$,
\item
$\parens{u_n}_{n \in \nat}$ is a minimizing sequence of $I_\alpha|_{W^{1, 2} \parens{r^2}}$,
\item
\eqref{eqn:dichotomy:T_1}--\eqref{eqn:dichotomy:T_3} are satisfied,
\item
$u_n \rightharpoonup u_\infty$ in $W^{1, 2}_\alpha$ as $n \to \infty$.
\end{itemize}
\item
\label{dich:cond:5}
Every
\[
u \in M \parens{\rho^2}
:=
\bigcup_{0 < r \leq \rho}
\set*{
	v \in W^{1, 2}_\alpha \parens{r^2} :
	I_\alpha \parens{v} = \I_\alpha \parens{r^2}
}
\]
has an ASP $g_u \colon \ooi{0, \infty} \to W^{1, 2}_\alpha$ such that
$h_{\alpha, g_u}' \parens{1} \neq 0$.
\end{enumerate}
Then \eqref{eqn:AMP} has a solution.
\end{thm}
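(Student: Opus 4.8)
The plan is to deduce \eqref{eqn:AMP} from Lemma~\ref{dich:lem} applied with $r = \rho$. Condition~\ref{dich:cond:4} already furnishes, at $r = \rho$, a Palais--Smale minimizing sequence $\parens{u_n}_n$ with a nonzero weak limit $u_\infty$ satisfying \eqref{eqn:dichotomy:T_1}--\eqref{eqn:dichotomy:T_3}, so the only hypothesis of Lemma~\ref{dich:lem} left to verify is the strict sub-additivity \eqref{eqn:SSC} at $r = \rho$; once that is in hand the lemma yields strong convergence $u_n \to u_\infty$ and hence a solution of \eqref{eqn:AMP}. The whole problem therefore reduces to establishing \eqref{eqn:SSC} for $r = \rho$. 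I record the elementary fact used repeatedly: writing $j\parens{\tau} := \I_\alpha\parens{\tau}/\tau$, strict sub-additivity at $r = \sqrt{\tau}$ follows from strict monotonicity of the ratio, since if $j\parens{s} > j\parens{\tau}$ whenever $0 < s < \tau$, then for $\mu \in \ooi{0, \sqrt{\tau}}$ both $\mu^2, \tau - \mu^2 \in \ooi{0, \tau}$ and $\I_\alpha\parens{\mu^2} + \I_\alpha\parens{\tau - \mu^2} = \mu^2 j\parens{\mu^2} + \parens{\tau - \mu^2} j\parens{\tau - \mu^2} > \mu^2 j\parens{\tau} + \parens{\tau - \mu^2} j\parens{\tau} = \I_\alpha\parens{\tau}$.

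Second, I would isolate the mechanism behind \ref{dich:cond:5} as a sub-lemma: \emph{if $u$ minimizes $I_\alpha$ on $W^{1,2}_\alpha\parens{r^2}$ and admits an ASP $g_u$ with $h_{\alpha, g_u}'\parens{1} \neq 0$, then $r^2$ is not a local minimum of $j$.} Indeed, since $I_\alpha\parens{u} = \I_\alpha\parens{r^2}$ we have $h_{\alpha, g_u}\parens{1} = 0$, and differentiability together with $h_{\alpha, g_u}'\parens{1} \neq 0$ forces $h_{\alpha, g_u} < 0$ on a one-sided punctured neighbourhood of $1$. For $\theta$ there, put $\sigma^2 := \norm{g_u\parens{\theta}}_{L^2}^2 = \Theta_{g_u}\parens{\theta} r^2$; since $g_u\parens{\theta} \in W^{1,2}_\alpha\parens{\sigma^2}$,
\[
\I_\alpha\parens{\sigma^2}
\leq
I_\alpha\parens{g_u\parens{\theta}}
<
\Theta_{g_u}\parens{\theta} \I_\alpha\parens{r^2}
=
\frac{\sigma^2}{r^2} \I_\alpha\parens{r^2},
\]
whence $j\parens{\sigma^2} < j\parens{r^2}$, while $\Theta_{g_u}'\parens{1} \neq 0$ guarantees $\sigma^2 \neq r^2$ and $\sigma^2 \to r^2$ as $\theta \to 1$.

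Third, I would run an extremal argument on $j$. By \ref{dich:cond:1} and \ref{dich:cond:3}, $j$ extends continuously to $\cci{0, \rho^2}$ with $j\parens{0} = 0$, and by \ref{dich:cond:2} we have $j < 0$ on $\oci{0, \rho^2}$; hence $j$ attains a global minimum $m < 0 = j\parens{0}$, and its minimum set is a compact subset of $\oci{0, \rho^2}$. Let $\tau_0$ be its smallest element. As $\tau_0$ is the smallest minimum point, $j > m$ on $\ooi{0, \tau_0}$, so by the computation of the first paragraph strict sub-additivity \eqref{eqn:SSC} holds at $r = \sqrt{\tau_0}$. Hence Condition~\ref{dich:cond:4} and Lemma~\ref{dich:lem} produce a minimizer $u$ of $I_\alpha$ on $W^{1,2}_\alpha\parens{\tau_0}$; since $\tau_0 \leq \rho^2$ we have $u \in M\parens{\rho^2}$, so \ref{dich:cond:5} supplies an ASP with $h_{\alpha, g_u}'\parens{1} \neq 0$. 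If $\tau_0 < \rho^2$, the sub-lemma yields $\sigma^2 \in \oci{0, \rho^2}$ arbitrarily close to $\tau_0$ with $j\parens{\sigma^2} < m$, contradicting the global minimality of $m$. Therefore $\tau_0 = \rho^2$, and being the smallest minimum point this forces $j > m = j\parens{\rho^2}$ on $\ooi{0, \rho^2}$, i.e.\ strict sub-additivity \eqref{eqn:SSC} at $r = \rho$; feeding this into Lemma~\ref{dich:lem} as in the first paragraph finishes the proof.

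The main obstacle is the circular dependence between the two halves: strict sub-additivity is what allows Lemma~\ref{dich:lem} to manufacture a minimizer, yet \ref{dich:cond:5} (hence the sub-lemma ruling out interior minima of $j$) can only be invoked once a minimizer exists. The device that breaks the circle is selecting the \emph{smallest} global minimizer $\tau_0$ of the ratio $j$: this choice simultaneously guarantees strict sub-additivity at $\tau_0$ — so that a genuine minimizer is available there — and produces the contradiction with global minimality unless $\tau_0$ is forced out to the endpoint $\rho^2$. A secondary point requiring care is that the sub-lemma controls $j$ only on the side of $r^2$ where $h_{\alpha, g_u} < 0$; this is harmless at an interior $\tau_0$, where both sides lie in $\oci{0, \rho^2}$, and it is exactly why the argument can only terminate, rather than contradict, once $\tau_0 = \rho^2$.
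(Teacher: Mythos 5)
Your proof is correct and shares the paper's overall architecture: reduce everything to the strict sub-additivity condition \eqref{eqn:SSC} at $r = \rho$, find an intermediate mass where \eqref{eqn:SSC} holds automatically so that Lemma \ref{dich:lem} (fed by Condition \ref{dich:cond:4}) produces an element of $M\parens{\rho^2}$, then use the ASP of Condition \ref{dich:cond:5} to force that mass out to $\rho$. The genuine difference is in the choice of the distinguished mass and the shape of the contradiction. The paper (Lemma \ref{dich:lem:3}) takes $r = \inf \set{t \in \oci{0, \rho} : \I_\alpha \parens{t^2} / t^2 = \I_\alpha \parens{\rho^2} / \rho^2}$, which controls the ratio $t \mapsto \I_\alpha \parens{t^2} / t^2$ only on $\ooi{0, r}$; consequently, for an ASP with $\Theta_{g_{u_\infty}}' \parens{1} > 0$ it obtains $h_{\alpha, g_{u_\infty}} \geq 0$ only on $\oci{1 - \eps, 1}$, which by itself yields $h_{\alpha, g_{u_\infty}}' \parens{1^-} \leq 0$ rather than the asserted $h_{\alpha, g_{u_\infty}}' \parens{1^-} = 0$, so the contradiction with Condition \ref{dich:cond:5} is one-sided and delicate as written. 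Your choice --- the smallest \emph{global} minimizer $\tau_0$ of $\tau \mapsto \I_\alpha \parens{\tau} / \tau$ on $\oci{0, \rho^2}$, which exists by the continuous extension to $\cci{0, \rho^2}$ via Conditions \ref{dich:cond:1}--\ref{dich:cond:3} --- buys two-sided control: your sub-lemma extracts from any ASP with $h_{\alpha, g_u}' \parens{1} \neq 0$ points $\sigma^2 \neq \tau_0$, arbitrarily close to $\tau_0$, where the ratio drops strictly below its value at $\tau_0$, and when $\tau_0 < \rho^2$ this contradicts global minimality regardless of which side of $\tau_0$ the point $\sigma^2$ lands on. The remaining ingredients (SSC from strict minimality of the ratio at the right endpoint of $\ooi{0, \tau_0}$, and the final application of Lemma \ref{dich:lem} at $r = \rho$) match the paper, so your variant is a clean and arguably more robust execution of the same strategy.
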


Let us briefly comment on the strategy for the proof of the theorem. Lemma \ref{dich:lem:3} will show that due to Conditions \ref{dich:cond:1}--\ref{dich:cond:3}, there exists
$r \in \oci{0, \rho}$ such that \eqref{eqn:SSC} is satisfied. In view of Condition \ref{dich:cond:4}, we can apply Lemma \ref{dich:lem} to deduce that \eqref{eqn:AMP:3} has a solution. Finally, we can use Condition \ref{dich:cond:5} to prove that $r = \rho$.

Even though the conditions in Theorem \ref{dich:thm} may seem more complicated than those in \cite[Theorem 2.1]{bellazziniScalingPropertiesFunctionals2011}, we claim that the proof of \cite[Theorem 2.1]{bellazziniScalingPropertiesFunctionals2011} implicitly uses similar conditions. Indeed, \cite[Theorem 2.1]{bellazziniScalingPropertiesFunctionals2011} mentions \cite[(2.1)]{bellazziniScalingPropertiesFunctionals2011} without previously considering a minimizing sequence denoted by $\parens{u_n}_{n \in \nat}$ and a careful inspection of \cite[Section 3]{bellazziniScalingPropertiesFunctionals2011} shows that the unstated fact that \cite[(2.1)]{bellazziniScalingPropertiesFunctionals2011} would be satisfied for any minimizing sequence with any $L^2$-norm in $\oci{0, \rho}$ is being used to apply \cite[Lemma 2.1]{bellazziniScalingPropertiesFunctionals2011}: see the first paragraph of the Proof of Theorem 2.1 in \cite[p. 2494]{bellazziniScalingPropertiesFunctionals2011}.

These results are proved in Section \ref{dichotomy}.

\subsection{Applications}
\label{intro:applications}
\subsubsection{A Kirchhoff-type equation involving a point interaction}
\label{intro:Kirchhoff}
The \emph{Kirchhoff equation} is a nonlinear wave equation proposed in \cite[Neunundzwanzigste Vorlesung, \S 7]{kirchhoffVorlesungenUeberMathematische1876} as a model for the vibration of elastic strings which takes into account the change of their length produced by transverse vibrations. When searching for standing waves of a generalization of the Kirchhoff equation to three dimensions, we obtain the following \emph{Kirchhoff-type equation} in $\real^3$:
\begin{equation}
\label{Kirchhoff:eqn:no_point_interaction}
-\parens*{1 + \int \abs*{\nabla u \parens{x}}^2 \dif x} \Delta u
+
\omega u
=
u \abs{u}^{p - 2},
\end{equation}
where $2 < p < 6$ and we want to solve for $\omega \in \real$,
$u \colon \real^3 \to \complex$. Problem \eqref{Kirchhoff:eqn:no_point_interaction} and its variations have been already intensely studied in the literature: see \cite{alvesNonlinearPerturbationsPeriodic2012, azzolliniEllipticKirchhoffEquation2012, figueiredoExistenceLeastEnergy2015, heExistenceConcentrationBehavior2012, yeSharpExistenceConstrained2015}, for instance.

The nonlinear functional $H_\alpha \colon W^{1, 2}_\alpha \to \coi{0, \infty}$ extends
\[
W^{1, 2} \ni u
\mapsto 
\norm{u}_{\dot{W}^{1, 2}}^2
=
\int \abs*{\nabla u \parens{x}}^2 \dif x
\in
\coi{0, \infty},
\]
so we expect a \emph{Kirchhoff-type equation with a point interaction} in
$\real^3$ to take the following form:
\begin{equation}
\label{eqn:delta-Kirchhoff}
-\parens*{1 + H_\alpha \parens{u}} \Delta_\alpha u + \omega u
=
u \abs{u}^{p - 2}.
\end{equation}
Due to the Sobolev embeddings of $W^{1, 2}_\alpha$, we can proceed variationally whenever $2 < p < 3$. In this case, we associate \eqref{eqn:delta-Kirchhoff} with the \emph{energy functional} $I_\alpha \colon W^{1, 2}_\alpha \to \real$ defined as
\[
I_\alpha \parens{u}
=
\frac{1}{2} H_\alpha \parens{u}
+
\underbrace{
	\frac{1}{4} B_\alpha \parens{u}
	-
	\frac{1}{p} C \parens{u}
}_{=: T_\alpha \parens{u}},
\]
where $B_\alpha \colon W^{1, 2}_\alpha \to \coi{0, \infty}$ and $C \colon L^p \to \real$ are respectively defined as
\[
B_\alpha \parens{u} = H_\alpha \parens{u}^2
\quad \text{and} \quad
C \parens{u} = \norm{u}_{L^p}^p.
\]
In this context, we understand a \emph{ground state} of \eqref{eqn:delta-Kirchhoff} with \emph{mass} $\rho$ to be a solution to \eqref{eqn:AMP}. We proceed to a rigorous statement of our result about existence of ground states of \eqref{eqn:delta-Kirchhoff}.

\begin{thm}
\label{thm:Kirchhoff}
Suppose that $\alpha \in \coi{0, \infty}$ and $2 < p < \frac{5}{2}$. There exists $\rho_0 \in \ooi{0, \infty}$ such that if $0 < \rho < \rho_0$, then
\begin{enumerate}
\item
\eqref{eqn:delta-Kirchhoff} has a ground state with mass
$\rho$ and
\item
every ground state of \eqref{eqn:delta-Kirchhoff} with mass $\rho$ is in
$W^{1, 2}_\alpha \parens{\rho^2} \setminus W^{1, 2} \parens{\rho^2}$.
\end{enumerate}
\end{thm}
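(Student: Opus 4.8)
The plan is to obtain part (1) by verifying the five hypotheses of Theorem~\ref{dich:thm} and part (2) from the strict inequality \eqref{cond:vanishing:1}. I would first record that the abstract hypotheses \eqref{intro:eqn:T_1}--\eqref{intro:eqn:T_2} hold: since $H_\alpha$ is a continuous quadratic form, $B_\alpha=H_\alpha^2$ is smooth with $\frac14 B_\alpha'\parens{v}\brackets{u}=H_\alpha\parens{v}\,\Re S_\alpha\parens{v,u}$ bounded on bounded sets, while $2<p<\frac52<3$ makes $W^{1,2}_\alpha\hookrightarrow L^p$ continuous and $C\parens{u}=\norm{u}_{L^p}^p$ of class $C^1$ with derivative bounded on bounded sets. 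The analytic engine is a Gagliardo--Nirenberg inequality adapted to the energy space, $\norm{u}_{L^p}^p\lesssim\norm{u}_{L^2}^{p\parens{1-\theta}}H_\alpha\parens{u}^{p\theta/2}$ with $\theta=\frac{3\parens{p-2}}{2p}$, so $\frac{p\theta}{2}<\frac38<1$. Substituting this into $I_\alpha$ shows $s\mapsto\frac12 s+\frac14 s^2-c\,r^{p\parens{1-\theta}}s^{p\theta/2}$ (with $s=H_\alpha\parens{u}$) is coercive and bounded below, giving \eqref{cond:vanishing:2} and the finiteness half of Condition~\ref{dich:cond:2}; optimizing the lower bound yields $\I_\alpha\parens{r^2}\gtrsim-r^{2\parens{6-p}/\parens{10-3p}}$, whose exponent exceeds $2$ precisely because $p>2$, which is Condition~\ref{dich:cond:1}. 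Negativity ($\I_\alpha\parens{r^2}<0$, completing Condition~\ref{dich:cond:2}) comes from testing with a dilated profile $\phi_t\parens{x}=t^{3/2}\phi\parens{tx}$ and sending $t\to 0^+$, where the term $-\frac1p t^{3\parens{p-2}/2}\norm{\phi}_{L^p}^p$ dominates; continuity (Condition~\ref{dich:cond:3}) follows from multiplicative rescaling $u\mapsto cu$ together with these estimates.

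The conceptual core is the gap $\I_\alpha\parens{r^2}<\I\parens{r^2}$ for every $r\in\oci{0,\rho}$, which feeds both the exclusion of vanishing through Lemma~\ref{lem:vanishing} and part (2). I should stress that Lemma~\ref{lem:calI_alpha<calI} does not apply verbatim here: evaluating $T_\alpha'$ at a regular $v\in W^{2,2}$ gives $T_\alpha'\parens{v}\brackets{u}=\Re\int\overline{z_v}u-H_\alpha\parens{v}\Re\parens{q_u\overline{v\parens{0}}}$ with $z_v=H_\alpha\parens{v}\parens{-\Delta v}-\abs{v}^{p-2}v$, and since $v\parens{0}\neq0$ the function $v$ is not in the operator domain, so the nonlocal charge term is not an $L^2$ pairing and \eqref{eqn:form-of-T_alpha'} fails for the Kirchhoff functional. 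Instead I would argue directly. Let $v$ be a ground state of the auxiliary problem \eqref{eqn:AMP:2}; this is the standard mass-constrained Kirchhoff problem, which for $p<\frac{10}{3}$ admits a positive, radially decreasing minimizer $v\in W^{2,2}$ with $v\parens{0}\neq0$ (existence as in \cite{yeSharpExistenceConstrained2015}). Using its Lagrange equation $\parens{1+H_\alpha\parens{v}}\parens{-\Delta v}-\abs{v}^{p-2}v=\omega v$ and testing $I_\alpha'\parens{v}-\omega v$ against a charge direction $\zeta$ with charge $q_\zeta$, the $L^2$ contributions cancel and leave $-\parens{1+H_\alpha\parens{v}}\Re\parens{q_\zeta\overline{v\parens{0}}}$; choosing $q_\zeta=\overline{v\parens{0}}$ makes this strictly negative, so $v$ is not a constrained critical point of $I_\alpha$ on $W^{1,2}_\alpha\parens{r^2}$ and $\I_\alpha\parens{r^2}<I_\alpha\parens{v}=I\parens{v}=\I\parens{r^2}$.

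For Condition~\ref{dich:cond:4} I would apply Ekeland's variational principle to the coercive, bounded-below functional $I_\alpha|_{W^{1,2}_\alpha\parens{r^2}}$ to produce a minimizing Palais--Smale sequence $\parens{u_n}$; coercivity gives boundedness and a weak limit $u_n\rightharpoonup u_\infty$, and Lemma~\ref{lem:vanishing} (fed by the gap above) yields $\liminf\abs{q_n}>0$, hence $u_\infty\neq0$. Verifying \eqref{eqn:dichotomy:T_1} combines Brezis--Lieb for the $C$ term with the quadratic-form splitting $H_\alpha\parens{u_n}=H_\alpha\parens{u_\infty}+H_\alpha\parens{u_n-u_\infty}+o\parens{1}$, which gives $\liminf\parens{B_\alpha\parens{u_n}-B_\alpha\parens{u_n-u_\infty}}\ge B_\alpha\parens{u_\infty}$; \eqref{eqn:dichotomy:T_2} follows from $\delta_n\to1$ and the boundedness of $T_\alpha'$. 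The crux is \eqref{eqn:dichotomy:T_3}: the Kirchhoff part is unconditionally monotone, since $\parens*{\frac14 B_\alpha'\parens{u_n}-\frac14 B_\alpha'\parens{u_m}}\brackets{u_n-u_m}=\parens{x-y}^2\parens{x^2+xy+y^2}\ge0$ for $x=\sqrt{H_\alpha\parens{u_n}}$, $y=\sqrt{H_\alpha\parens{u_m}}$ (by Cauchy--Schwarz for $S_\alpha$, which is an inner product because $\alpha\ge0$), but the focusing term contributes $-\frac1p\parens{C'\parens{u_n}-C'\parens{u_m}}\brackets{u_n-u_m}\le0$ and so must be shown to vanish. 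This is the step I expect to be the main obstacle, as it amounts to upgrading weak to strong $L^p$ convergence of $\parens{u_n}$. My intended route is to run the whole argument in the rotationally invariant subspace of $W^{1,2}_\alpha$ (compatible with $I_\alpha$): there the charge part $q_n G_\lambda$ converges in $L^p$ once the scalars $q_n$ converge, while the regular radial part is compact in $L^p$ for $p<3$ by Strauss' lemma, so the full sequence converges strongly in $L^p$ and \eqref{eqn:dichotomy:T_3} holds; justifying the reduction to radial functions is itself part of the work.

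Finally, for Condition~\ref{dich:cond:5} and a given $u\in M\parens{\rho^2}$ I would use the multiplicative path $g_u\parens{\theta}=\sqrt\theta\,u$, which is an admissible scaling path with $\Theta_{g_u}\parens{\theta}=\theta$ and $h_{\alpha,g_u}'\parens{1}=\frac14 B_\alpha\parens{u}-\parens*{\frac12-\frac1p}C\parens{u}$. Since $p>2$ this is nonzero as soon as $\frac14 B_\alpha\parens{u}\neq\parens*{\frac12-\frac1p}C\parens{u}$, and an a priori estimate showing that the quartic quantity $B_\alpha\parens{u}$ is negligible against $C\parens{u}$ for minimizers of small mass makes $h_{\alpha,g_u}'\parens{1}<0$ once $\rho<\rho_0$ for a suitable threshold $\rho_0$; this is precisely where the smallness of the mass enters. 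Theorem~\ref{dich:thm} then furnishes a solution of \eqref{eqn:AMP}, proving part (1). Part (2) is immediate: if a ground state $u$ belonged to $W^{1,2}\parens{\rho^2}$, then $\I_\alpha\parens{\rho^2}=I_\alpha\parens{u}=I\parens{u}\ge\I\parens{\rho^2}>\I_\alpha\parens{\rho^2}$, a contradiction, so every ground state lies in $W^{1,2}_\alpha\parens{\rho^2}\setminus W^{1,2}\parens{\rho^2}$.
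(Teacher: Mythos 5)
Your overall architecture matches the paper's: both proofs reduce Theorem \ref{thm:Kirchhoff} to a verification of the hypotheses of Lemma \ref{lem:vanishing} and Theorem \ref{dich:thm}, and your treatments of Conditions \ref{dich:cond:1}--\ref{dich:cond:3}, of \eqref{eqn:dichotomy:T_1}--\eqref{eqn:dichotomy:T_2}, of the monotonicity of the $B_\alpha'$ term, and of part (2) coincide with the paper's. Two of your deviations are genuine improvements in care or simplicity: you correctly observe that \eqref{eqn:form-of-T_alpha'} fails for the Kirchhoff nonlinearity because $S_\alpha\parens{v,\cdot}$ carries a charge term $\propto q_u \overline{v\parens{0}}$ that is not an $L^2$ pairing (the paper invokes Lemma \ref{lem:calI_alpha<calI} anyway; your direct Lagrange-multiplier argument is the correct repair and reproduces the same conclusion), and for Condition \ref{dich:cond:5} your single multiplicative path $g_u\parens{\theta}=\sqrt{\theta}\,u$ with the a priori bound $H_\alpha\parens{u}\lesssim r^{2\parens{6-p}/\parens{10-3p}}\to 0$ does suffice, which is simpler than the paper's two-parameter family $g_u^\beta$ and the Pohozaev-type identities \eqref{Kirchhoff:eqn:aux:4}--\eqref{Kirchhoff:eqn:aux:5}; you should still write out that bound, since it is exactly where $\rho_0$ comes from.

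The genuine gap is your verification of \eqref{eqn:dichotomy:T_3} for the focusing term. Your plan to ``run the whole argument in the rotationally invariant subspace'' and invoke Strauss-type compactness is not justified and is unlikely to be salvageable at reasonable cost: Lemma \ref{dich:lem} and Condition \ref{dich:cond:4} require a minimizing Palais--Smale sequence for $I_\alpha$ on all of $W^{1,2}_\alpha\parens{r^2}$, so you would first have to prove that the radial infimum equals $\I_\alpha\parens{r^2}$, which would need a rearrangement inequality for $H_\alpha$ on $W^{1,2}_\alpha$ --- and the decomposition $u=\phi+qG_\lambda$ does not interact well with symmetrization of $\abs{u}$. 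The reduction is also unnecessary. The paper's route (Lemma \ref{delta-NLSE:dich:lem:T_3}) is: the first conclusion of Lemma \ref{dich:lem:2} uses only \eqref{eqn:SSC}, \eqref{eqn:dichotomy:T_1} and \eqref{eqn:dichotomy:T_2} to force $\norm{u_\infty}_{L^2}=r$, hence $\norm{u_n-u_\infty}_{L^2}\to 0$; weak convergence gives $q_n\to q_\infty$, hence $\norm{\phi_{\lambda,n}-\phi_{\lambda,m}}_{L^2}\to 0$; and then Proposition \ref{prelim:prop:GN} interpolates,
\[
\norm{u_n-u_m}_{L^p}^p
\leq
K\parens*{
\norm{\phi_{\lambda,n}-\phi_{\lambda,m}}_{\dot{W}^{1,2}}^{\frac{3\parens{p-2}}{2}}
\norm{\phi_{\lambda,n}-\phi_{\lambda,m}}_{L^2}^{\frac{6-p}{2}}
+
\frac{\abs{q_n-q_m}^p}{\lambda^{\frac{3-p}{2}}}
},
\]
so boundedness in $\dot{W}^{1,2}$ upgrades strong $L^2$ convergence to strong $L^p$ convergence with no symmetry assumption, and Hölder's inequality then gives $\parens{C'\parens{u_n}-C'\parens{u_m}}\brackets{u_n-u_m}\to 0$. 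There is no circularity here because \eqref{eqn:dichotomy:T_3} is only consumed by the second conclusion of Lemma \ref{dich:lem:2}. You should replace your radial argument with this interpolation step.
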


We prove Theorem \ref{thm:Kirchhoff} in Section \ref{Kirchhoff}. In fact, the result follows directly from an application of Lemma \ref{lem:vanishing} and Theorem \ref{dich:thm}, so its proof consists of a verification that their hypotheses are satisfied. A technical limitation of our method is that we can only prove the existence of ground states at sufficiently small masses, which contrasts with the fact that \eqref{Kirchhoff:eqn:no_point_interaction} has ground states with any mass when $2 < p < \frac{5}{2}$ (see \cite[Theorem 1.1]{yeSharpExistenceConstrained2015}). This happens because we can only prove that Condition \ref{dich:cond:5} in Theorem \ref{dich:thm} is satisfied when $\rho$ is a sufficiently small positive number (see the proof of Lemma \ref{Kirchhoff:dich:lem:cond:5}).

\subsubsection{The Schrödinger--Poisson system involving a point interaction}

Consider the following \emph{nonlinear Schrödinger--Poisson system} in $\real^3$:
\[
\begin{cases}
- \Delta u + \omega u + \psi u = u \abs{u}^{p - 2};
\\
- \Delta \psi = 4 \pi \abs{u}^2,
\end{cases}
\]
where $\omega \in \real$, $2 < p < 6$ and the unknowns are $u$,
$\psi \colon \real^3 \to \complex$. Solutions to this system describe the spatial profile of standing waves of the NLSE that suffer electrostatic self-interaction through the minimal coupling rule as modeled by Maxwell's electromagnetism (for details about the physical deduction of this system, see \cite[Section 2]{benciEigenvalueProblemSchrodingerMaxwell1998}). This system has been already widely studied in the literature: see \cite{ambrosettiMULTIPLEBOUNDSTATES2008, azzolliniGroundStateSolutions2008, bellazziniScalingPropertiesFunctionals2011, bellazziniStableStandingWaves2011, benciEigenvalueProblemSchrodingerMaxwell1998, daprileSolitaryWavesNonlinear2004, ruizSchrodingerPoissonEquation2006}, for instance.

Inspired by this system, we propose the problem of existence of ground states of the \emph{nonlinear Schrödinger--Poisson system with a point interaction} in $\real^3$ that follows:
\begin{equation}
\label{eqn:delta-SP}
\begin{cases}
\op u + \omega u + \psi u = u \abs{u}^{p - 2};
\\
- \Delta \psi = 4 \pi \abs{u}^2,
\end{cases}
\end{equation}
where $\alpha \in \coi{0, \infty}$, $2 < p < 3$ and we want to solve for
$\omega \in \real$, $u \colon \real^3 \to \complex$,
$\psi \colon \real^3 \to \coi{0, \infty}$.

At this point, it is worth recalling the following corollary of the Hardy--Littlewood--Sobolev Inequality (see \cite[Theorem 4.3]{liebAnalysis2001}).

\begin{cor}
\label{cor:HLS}
Suppose that $p \in \ooi{1, \frac{3}{2}}$ and $r \in \ooi{3, \infty}$ are such that $\frac{1}{p} = \frac{2}{3} + \frac{1}{r}$. Then
$\psi_u := \abs{\cdot}^{- 1} \ast u \in L^r$
for every $u \in L^p$ and the linear mapping
\[L^p \ni u \mapsto \psi_u \in L^r\]
is continuous.
\end{cor}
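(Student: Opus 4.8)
The plan is to reduce the statement to a single application of the Hardy--Littlewood--Sobolev inequality as recorded in \cite[Theorem 4.3]{liebAnalysis2001}. In dimension $n = 3$ with kernel exponent $\lambda = 1$, that inequality provides a constant $C = C \parens{p, r} \in \ooi{0, \infty}$ for which
\[
\int \int \frac{\abs{f \parens{x}} \abs{g \parens{y}}}{\abs{x - y}} \dif x \dif y
\leq
C \norm{f}_{L^{r'}} \norm{g}_{L^p}
\]
holds for all $f \in L^{r'}$ and $g \in L^p$, where $r' := \frac{r}{r - 1}$ denotes the Hölder conjugate of $r$ and we require the exponent relation $\frac{1}{r'} + \frac{1}{p} + \frac{1}{3} = 2$ together with $r', p \in \ooi{1, \infty}$.

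First I would check that the hypotheses match these requirements. Since $r \in \ooi{3, \infty}$, we have $r' \in \ooi{1, \frac{3}{2}} \subset \ooi{1, \infty}$ and $p \in \ooi{1, \frac{3}{2}} \subset \ooi{1, \infty}$; moreover, using $\frac{1}{r'} = 1 - \frac{1}{r}$ and the assumption $\frac{1}{p} = \frac{2}{3} + \frac{1}{r}$,
\[
\frac{1}{r'} + \frac{1}{p} + \frac{1}{3}
=
\parens*{1 - \frac{1}{r}} + \parens*{\frac{2}{3} + \frac{1}{r}} + \frac{1}{3}
=
2,
\]
so the exponent relation is exactly the one in the corollary. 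In particular, for every $u \in L^p$ the inequality applied to $\abs{f}$ and $\abs{u}$ shows that the integral defining $\psi_u \parens{x} = \int \abs{x - y}^{- 1} u \parens{y} \dif y$ converges absolutely for a.e.\ $x$, so that $\psi_u$ is a well-defined element of $L^1_\loc$.

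Finally I would conclude by duality. Fixing $u \in L^p$, Fubini's theorem and the displayed estimate give
\[
\abs*{\int \overline{f \parens{x}} \psi_u \parens{x} \dif x}
\leq
\int \int \frac{\abs{f \parens{x}} \abs{u \parens{y}}}{\abs{x - y}} \dif y \dif x
\leq
C \norm{f}_{L^{r'}} \norm{u}_{L^p}
\quad \text{for every} \quad
f \in L^{r'}.
\]
Because $r' \in \ooi{1, \infty}$, the space $L^r$ is the dual of $L^{r'}$, so the above bound identifies $\psi_u$ with a continuous functional on $L^{r'}$ of norm at most $C \norm{u}_{L^p}$; hence $\psi_u \in L^r$ with $\norm{\psi_u}_{L^r} \leq C \norm{u}_{L^p}$. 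Linearity of $u \mapsto \psi_u$ is immediate from the linearity of convolution, and this last uniform estimate is precisely the continuity of the linear map $L^p \ni u \mapsto \psi_u \in L^r$. The only step demanding a little care---rather than a genuine obstacle---is this passage from the bilinear Hardy--Littlewood--Sobolev estimate to the mapping property, which rests on the almost-everywhere finiteness of the convolution established above.
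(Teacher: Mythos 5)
Your proposal is correct and is exactly the intended argument: the paper gives no separate proof of Corollary \ref{cor:HLS}, presenting it as an immediate consequence of the bilinear Hardy--Littlewood--Sobolev inequality of \cite[Theorem 4.3]{liebAnalysis2001}, and your exponent verification $\frac{1}{r'} + \frac{1}{p} + \frac{1}{3} = 2$ together with the duality passage from the bilinear estimate to the mapping property is the standard way to make that deduction precise.
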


Consider a $u \in W^{1, 2}_\alpha$. It follows from the corollary that
$\abs{u}^2 \in L^p$ for every $p \in \ooi{1, \frac{3}{2}}$, so $\psi_{\abs{u}^2} \in L^r$ for every $r \in \ooi{3, \infty}$. An application of Hölder's Inequality shows that
$\psi_{\abs{u}^2} u \in L^2$ and $\psi_{\abs{u}^2} \abs{u}^2 \in L^1$. As such, we will actually consider the following reduction of \eqref{eqn:delta-SP}:
\begin{equation}
\label{eqn:reduced-delta-SP}
\op u + \omega u + \psi_{\abs{u}^2} u = u \abs{u}^{p - 1},
\end{equation}
where $2 < p < 3$. This reduction is usually employed in papers concerned with the Schrödinger--Poisson system, and is motivated by the fact that $\abs{\cdot}^{- 1}$ is the fundamental solution of $- \frac{1}{4 \pi} \Delta$.

We associate \eqref{eqn:reduced-delta-SP} with the energy functional
$I_\alpha \colon W^{1, 2}_\alpha \to \real$
defined as
\[
I_\alpha \parens{u}
=
\frac{1}{2} H_\alpha \parens{u}
+
\underbrace{
	\frac{1}{4} B \parens{u}
	-
	\frac{1}{p} C \parens{u}
}_{=: T_\alpha \parens{u}},
\]
where $B \colon L^{12 / 5} \to \coi{0, \infty}$ and $C \colon L^p \to \coi{0, \infty}$ are respectively defined as
\[
B \parens{u} = \int \parens*{\psi_{\abs{u}^2} \abs{u}^2}
\quad \text{and} \quad
C \parens{u} = \norm{u}_{L^p}^p.
\]
Let us state our result about existence of ground states of \eqref{eqn:reduced-delta-SP}.

\begin{thm}
\label{thm:delta-SP}
Suppose that $\alpha \in \coi{0, \infty}$ and $2 < p < \frac{5}{2}$. There exists $\rho_0 \in \ooi{0, \infty}$ such that if $0 < \rho < \rho_0$, then
\begin{enumerate}
\item
\eqref{eqn:reduced-delta-SP} has a ground state with mass
$\rho$ and
\item
every ground state of \eqref{eqn:reduced-delta-SP} with mass $\rho$ is in
$W^{1, 2}_\alpha \parens{\rho^2} \setminus W^{1, 2} \parens{\rho^2}$.
\end{enumerate}
\end{thm}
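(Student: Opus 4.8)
The plan is to reproduce, almost verbatim, the argument structure of Theorem \ref{thm:Kirchhoff}: I would verify simultaneously the hypotheses of Lemma \ref{lem:vanishing} (to rule out vanishing) and of Theorem \ref{dich:thm} (to rule out dichotomy) for every sufficiently small mass, so that \eqref{eqn:AMP} acquires a solution. The only structural difference from the Kirchhoff functional is that the quartic term is now the nonlocal Coulomb energy $B\parens{u} = \int \psi_{\abs{u}^2} \abs{u}^2$ rather than $H_\alpha\parens{u}^2$; thus the preliminary work consists in transferring the needed analytic properties of $B$ from Corollary \ref{cor:HLS}. Concretely, I would first record that $T_\alpha = \tfrac14 B - \tfrac1p C$ satisfies \eqref{intro:eqn:T_1}--\eqref{intro:eqn:T_2}: the regularity and boundedness of $C\parens{u} = \norm{u}_{L^p}^p$ follow from the Sobolev embedding $W^{1, 2}_\alpha \hookrightarrow L^p$ for $2 \leq p < 3$, while those of $B$ follow from Corollary \ref{cor:HLS} and Hölder's inequality. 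The same computation yields the representation $T_\alpha'\parens{v}\brackets{u} = \Re \int \overline{z_v} u$ with $z_v = \psi_{\abs{v}^2} v - \abs{v}^{p - 2} v$, and here the hypothesis $p < \tfrac52$ is exactly what guarantees $\abs{v}^{p - 2} v \in L^2$ (since $2\parens{p - 1} < 3$), so that \eqref{eqn:form-of-T_alpha'} holds and Lemma \ref{lem:calI_alpha<calI} becomes applicable.

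Next I would establish the elementary properties of $\I_\alpha$ feeding Conditions \ref{dich:cond:1}--\ref{dich:cond:3} of Theorem \ref{dich:thm}. Since $B \geq 0$, one has $I_\alpha\parens{u} \geq \tfrac12 H_\alpha\parens{u} - \tfrac1p C\parens{u}$, and a Gagliardo--Nirenberg inequality adapted to $W^{1, 2}_\alpha$ bounds $C\parens{u}$ by $\norm{u}_{L^2}^{\parens{1 - \theta} p} H_\alpha\parens{u}^{\theta p / 2}$ with $\theta = \tfrac{3\parens{p - 2}}{2 p}$; because $p < \tfrac52$ forces $\theta p / 2 < 1$, this simultaneously yields the coercivity required in \eqref{cond:vanishing:2} and the lower bound $\I_\alpha\parens{r^2} \gtrsim - r^{2\parens{6 - p} / \parens{10 - 3 p}}$. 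As the exponent exceeds $2$ precisely because $p > 2$, Condition \ref{dich:cond:1} follows, while testing with a fixed profile rescaled to mass $r^2$ gives $\I_\alpha\parens{r^2} < 0$, completing Condition \ref{dich:cond:2}; Condition \ref{dich:cond:3} is the standard continuity of $r \mapsto \I_\alpha\parens{r^2}$ obtained by rescaling near-minimizers.

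The nonvanishing of the weak limit demanded in Condition \ref{dich:cond:4} is where Lemma \ref{lem:vanishing} enters, and it rests on \eqref{cond:vanishing:1}. For every $r \in \oci{0, \rho}$ I would invoke the existence theory for the Schrödinger--Poisson functional \emph{without} point interaction, i.e.\ the auxiliary problem \eqref{eqn:AMP:2} studied by Bellazzini \& Siciliano in \cite{bellazziniScalingPropertiesFunctionals2011}: at small mass it admits a minimizer $v$, which by elliptic regularity lies in $W^{2, 2}$ and, being a positive ground state, satisfies $v\parens{0} \neq 0$. Lemma \ref{lem:calI_alpha<calI} then gives $\I_\alpha\parens{r^2} < \I\parens{r^2}$ for all such $r$, so Lemma \ref{lem:vanishing} applies and forces the weak limit to satisfy $q_\infty \neq 0$. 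The remaining items of Condition \ref{dich:cond:4}---a Palais--Smale minimizing sequence (via Ekeland's principle) and the Brezis--Lieb-type conditions \eqref{eqn:dichotomy:T_1}--\eqref{eqn:dichotomy:T_3}---are verified termwise: \eqref{eqn:dichotomy:T_1} combines the classical Brezis--Lieb lemma for $C$ with its analogue for the Coulomb energy $B$, \eqref{eqn:dichotomy:T_2} follows from $\delta_n \to 1$ and the boundedness of $T_\alpha'$ on bounded sets, and \eqref{eqn:dichotomy:T_3} uses the positivity of the bilinear Coulomb form against the compactness of the $L^p$ term under weak convergence.

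The main obstacle, and the source of the restriction to small $\rho$, is Condition \ref{dich:cond:5}: each minimizer $u \in M\parens{\rho^2}$ must admit an admissible scaling path with $h_{\alpha, g_u}'\parens{1} \neq 0$. Mirroring the Kirchhoff argument of Lemma \ref{Kirchhoff:dich:lem:cond:5}, I would build $g_u$ from a dilation of $u$ and compute that $h_{\alpha, g_u}'\parens{1}$ reduces to a fixed linear combination of $B\parens{u}$ and $C\parens{u}$; the delicate point is that I cannot show this quantity is nonzero at arbitrary mass, but only after controlling the ratio $B\parens{u} / C\parens{u}$ along minimizers, which is tractable solely when $\rho$ is sufficiently small. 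Fixing $\rho_0$ so that this and all earlier smallness requirements hold, Theorem \ref{dich:thm} delivers a solution of \eqref{eqn:AMP}, proving the first claim. Finally, the second claim is immediate from \eqref{cond:vanishing:1}: a ground state $u \in W^{1, 2}\parens{\rho^2}$ would satisfy $I_\alpha\parens{u} = I\parens{u} \geq \I\parens{\rho^2} > \I_\alpha\parens{\rho^2}$, contradicting minimality, so every ground state lies in $W^{1, 2}_\alpha\parens{\rho^2} \setminus W^{1, 2}\parens{\rho^2}$.
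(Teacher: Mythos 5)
Your proposal is correct and follows essentially the same route as the paper: verify the hypotheses of Lemma \ref{lem:vanishing} (via Bellazzini--Siciliano's small-mass minimizer for the auxiliary problem and Lemma \ref{lem:calI_alpha<calI}) and of Theorem \ref{dich:thm} (Brézis--Lieb-type splitting for $B$, Hardy--Littlewood--Sobolev estimates, and dilation scaling paths yielding a contradiction at small mass), exactly as in Lemmas \ref{SP:lem:vanishing}--\ref{SP:dich:lem:cond:5}. The only notable divergences are cosmetic: you check Conditions \ref{dich:cond:1}--\ref{dich:cond:2} by a direct Gagliardo--Nirenberg bound rather than by squeezing $\I_\alpha$ against $\J_\alpha$ as the paper does, and your reduction of $h_{\alpha, g_u}'\parens{1}$ to a combination of $B\parens{u}$ and $C\parens{u}$ glosses over the $H_\alpha$-terms that the paper's two scaling identities \eqref{SP:eqn:aux:scaling_1}--\eqref{SP:eqn:aux:scaling_2} keep track of, but neither affects the argument.
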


Like Theorem \ref{thm:Kirchhoff}, the proof of Theorem \ref{thm:delta-SP} consists of a collection of verifications that the hypotheses of Lemma \ref{lem:vanishing} and Theorem \ref{dich:thm} are satisfied. These are done in Section \ref{SP}.

\begin{rmk}
This proof technique may also be applied to prove the existence of ground states at sufficiently small masses for a similar system under the additional effect of a point interaction. That is, consider the following \emph{nonlinear Schrödinger--Bopp--Podolsky system} in $\real^3$ introduced by d'Avenia \& Siciliano in \cite{daveniaNonlinearSchrodingerEquation2019}:
\[
\begin{cases}
	-\Delta u + \omega u + \psi u = u \abs{u}^{p - 2};\\
	\beta^2 \Delta^2 \psi - \Delta \psi = 4 \pi \abs{u}^2,
\end{cases}
\]
where $0 < \beta < \infty$, $2 < p < 6$ and we want to solve for
$\omega \in \real$, $u \colon \real^3 \to \complex$,
$\psi \colon \real^3 \to \coi{0, \infty}$. This system is obtained when considering the Bopp--Podolsky electromagnetism instead of Maxwell's. A motivation for this alternate theory is that it solves the \emph{infinity problem}, i.e., the fact that the electrical field induced by a point charge is not in $\dot{W}^{1, 2}$. We refer the reader to \cite{damianCriticalSchrodingerBopp2024, depaularamosConcentratedSolutionsSchrodinger2024, depaularamosExistenceLimitBehavior2023, figueiredoMultipleSolutionsSchrodinger2023, mascaroPositiveSolutionsSchrodingerBoppPodolsky2022} for a few studies of this system.
\end{rmk}

\subsection*{Acknowledgements}

The author is grateful for the anonymous reviewer of a previous version of this text, whose suggestions greatly helped to improve the paper. This study was financed in part by the Coordenação de Aperfeiçoamento de Pessoal de Nível Superior - Brasil (CAPES) - Finance Code 001.

\section{Technical preliminaries}
\label{prelim}

\subsection{Properties of the Green's functions}

In fact, $G_\lambda - G_\mu$ is more regular than $G_\lambda$, $G_\mu$.

\begin{lem}
\label{lem:G_lambda-G_mu-in-W^{2, 2}}
It holds that $G_\lambda - G_\mu \in W^{2, 2}$ for every $\lambda, \mu \in \ooi{0, \infty}$.
\end{lem}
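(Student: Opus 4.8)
The plan is to exploit the exact cancellation of the Dirac singularities of $G_\lambda$ and $G_\mu$. Both Green's functions satisfy \eqref{eqn:Green's-fct}, so subtracting the two distributional identities $-\Delta G_\lambda = \delta_0 - \lambda G_\lambda$ and $-\Delta G_\mu = \delta_0 - \mu G_\mu$ yields
\[
-\Delta \parens{G_\lambda - G_\mu} = \mu G_\mu - \lambda G_\lambda
\quad \text{in the sense of distributions,}
\]
the two copies of $\delta_0$ cancelling. By \eqref{eqn:integrability-of-G_lambda} we have $G_\lambda, G_\mu \in L^2$, so the right-hand side lies in $L^2$, and $G_\lambda - G_\mu \in L^2$ for the same reason. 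Thus $G_\lambda - G_\mu$ is an $L^2$ function whose distributional Laplacian is also in $L^2$, and the only remaining task is to upgrade this to membership in $W^{2, 2}$.

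To keep the argument self-contained I would pass to the Fourier side. Normalising so that $\widehat{-\Delta f} = \abs{\cdot}^2 \hat f$, relation \eqref{eqn:Green's-fct} reads $\parens{\abs{\xi}^2 + \lambda} \widehat{G_\lambda} \parens{\xi} = c$ with $c = \widehat{\delta_0}$ the (convention-dependent) constant, whence $\widehat{G_\lambda} \parens{\xi} = c / \parens{\abs{\xi}^2 + \lambda}$ and
\[
\widehat{G_\lambda - G_\mu} \parens{\xi}
=
\frac{c \parens{\mu - \lambda}}{\parens{\abs{\xi}^2 + \lambda} \parens{\abs{\xi}^2 + \mu}}.
\]
By Plancherel, $G_\lambda - G_\mu \in W^{2, 2}$ is equivalent to finiteness of $\int \parens{1 + \abs{\xi}^2}^2 \abs{\widehat{G_\lambda - G_\mu} \parens{\xi}}^2 \dif \xi$, since the $W^{2, 2}$-norm defined in Section \ref{intro:notation} is comparable to this weighted $L^2$-norm of the transform. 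The integrand is bounded near $\xi = 0$ and behaves like $\abs{\xi}^{-4}$ as $\abs{\xi} \to \infty$; because $4 > 3$, the tail $\int_{\abs{\xi} > 1} \abs{\xi}^{-4} \dif \xi$ converges in $\real^3$, and the claim follows.

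The only genuinely substantive step is the first one: recognising that the $\delta_0$ terms cancel so that $G_\lambda - G_\mu$ solves an equation with an $L^2$ right-hand side. Everything afterwards is a routine integrability check. Alternatively, once $G_\lambda - G_\mu \in L^2$ and $\Delta \parens{G_\lambda - G_\mu} \in L^2$ are in hand, one may simply invoke the standard elliptic-regularity characterisation of $W^{2, 2} \parens{\real^3}$ by these two conditions, via the Plancherel bound $\norm{\partial_j \partial_k w}_{L^2} \leq \norm{\Delta w}_{L^2}$. Either way, the dimensional constraint enters precisely through the integrability of the $\abs{\xi}^{-4}$ tail at infinity, which is exactly the place where $n = 3$ (equivalently, $G_\lambda \in L^2$) is used.
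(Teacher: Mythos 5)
Your proof is correct, and it takes a genuinely different route from the paper's. The two arguments share the one substantive observation — that the $\delta_0$ singularities cancel, so that $-\Delta\parens{G_\lambda - G_\mu} = \mu G_\mu - \lambda G_\lambda$ with right-hand side in $L^2$ — but diverge afterwards. The paper stays on the physical side: it first checks $G_\lambda - G_\mu \in W^{1, 2}$ by explicit computation with the formula for $G_\lambda$, and then upgrades the Laplacian bound to a bound on the full Hessian via the identity $\int \abs{\Delta \phi}^2 = \int \abs{\phi''}^2$ (obtained from the Chain Rule and the Divergence Theorem on $C_c^\infty$), which is precisely the Plancherel inequality $\norm{\partial_j \partial_k w}_{L^2} \leq \norm{\Delta w}_{L^2}$ you cite in your closing alternative. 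You instead pass to the Fourier side, compute $\widehat{G_\lambda - G_\mu}$ in closed form, and reduce the whole lemma to the integrability of a $\abs{\xi}^{-4}$ tail in $\real^3$. Your route is more self-contained — it handles the $L^2$, $W^{1, 2}$ and Hessian estimates in a single integrability check and makes the role of the dimension transparent — while the paper's route avoids invoking the Fourier transform and instead leans on the explicit spatial formula for $G_\lambda$; the paper's version also silently requires a density argument to apply the $C_c^\infty$ identity to $G_\lambda - G_\mu$, a step your Plancherel formulation sidesteps. In short: same key cancellation, different machinery for the regularity upgrade, and your second "alternative" paragraph is in substance the paper's own argument.
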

\begin{proof}
It is easy to verify explicitly that
\begin{equation}
\label{eqn:G_lambda-G_mu-in-W^{1, 2}}
G_\lambda - G_\mu \in W^{1, 2}.
\end{equation}

Let us prove that
\begin{equation}
\label{eqn:G_lambda-G_mu-in-L^2}
\abs*{\parens{G_\lambda - G_\mu}''} \in L^2.
\end{equation}
In view of the fact that
\[- \Delta G_\lambda + \lambda G_\lambda = - \Delta G_\mu + \mu G_\mu = \delta_0\]
in the sense of distributions, we deduce that
\begin{equation}
\label{eqn:G_lambda-G_mu}
- \Delta \parens{G_\lambda - G_\mu} = \mu G_\mu - \lambda G_\lambda
\quad \text{in} \quad
\real^3 \setminus \set{0}.
\end{equation}
As $G_\lambda, G_\mu \in L^2$, it follows that
\begin{equation}
\label{eqn:lapl-in-L^2}
- \Delta \parens{G_\lambda - G_\mu} \in L^2.
\end{equation}
Due to the Chain Rule and the Divergence Theorem,
\begin{equation}
\label{eqn:Dirichlet-energy}
\int \abs*{\Delta \phi \parens{x}}^2 \dif x
=
\int \abs*{\phi'' \parens{x}}^2 \dif x
\quad \text{for every} \quad
\phi \in C_c^\infty.
\end{equation}
It follows from \eqref{eqn:lapl-in-L^2} and \eqref{eqn:Dirichlet-energy}that
$\abs{\parens{G_\lambda - G_\mu}''} \in L^2$.

Finally, the inclusion $G_\lambda - G_\mu \in W^{2, 2}$ follows from \eqref{eqn:G_lambda-G_mu-in-W^{1, 2}} and \eqref{eqn:G_lambda-G_mu-in-L^2}.
\end{proof}

Due to Lemma \ref{lem:G_lambda-G_mu-in-W^{2, 2}}, it makes sense to consider the continuous linear functional
\[
W^{1, 2} \ni u \mapsto \angles{G_\lambda - G_\mu \mid u}_{\dot{W}^{1, 2}} \in \complex.
\]
In this context, the following result is a corollary of \eqref{eqn:G_lambda-G_mu}.

\begin{lem}
\label{lem:G_lambda-G_mu}
Suppose that $\lambda, \mu \in \ooi{0, \infty}$. Then
\[
\int \nabla \parens{G_\lambda - G_\mu} \parens{x} \cdot \nabla \phi \parens{x} \dif x
=
\int \parens*{\mu G_\mu \parens{x} - \lambda G_\lambda \parens{x}} \phi \parens{x} \dif x
\]
for every $\phi \in W^{1, 2}$.
\end{lem}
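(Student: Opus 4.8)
The plan is to first upgrade the pointwise identity \eqref{eqn:G_lambda-G_mu} to a weak identity valid on all of $\real^3$, then test it against smooth compactly supported functions and integrate by parts, and finally extend the resulting identity to all of $W^{1, 2}$ by density.

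The key observation is that, although \eqref{eqn:G_lambda-G_mu} is only asserted in $\real^3 \setminus \set{0}$, the identity
\[
- \Delta \parens{G_\lambda - G_\mu} = \mu G_\mu - \lambda G_\lambda
\]
in fact holds in the sense of distributions on all of $\real^3$. Indeed, subtracting the two instances of \eqref{eqn:Green's-fct} corresponding to $\lambda$ and $\mu$ makes the two copies of $\delta_0$ cancel, leaving precisely this identity with no singular contribution at the origin. (Equivalently, since $G_\lambda - G_\mu \in W^{2, 2}$ by Lemma \ref{lem:G_lambda-G_mu-in-W^{2, 2}}, its distributional Laplacian is the $L^2$ function $\Delta \parens{G_\lambda - G_\mu}$, which agrees a.e.\ with $\lambda G_\lambda - \mu G_\mu$ because the two sides coincide off the origin and an $L^2$ function cannot carry mass concentrated at a point.) This cancellation is the crux of the argument: it is what guarantees that the integration by parts below produces no boundary term at $0$.

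Having established this, I would test against an arbitrary $\varphi \in C_c^\infty$. Since $G_\lambda - G_\mu \in W^{1, 2}$ has weak gradient in $L^2$, a single integration by parts gives
\[
\int \nabla \parens{G_\lambda - G_\mu} \parens{x} \cdot \nabla \varphi \parens{x} \dif x
=
\int \parens{G_\lambda - G_\mu} \parens{x} \parens*{- \Delta \varphi \parens{x}} \dif x,
\]
and combining this with the weak identity above yields the desired equality for every $\varphi \in C_c^\infty$.

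It remains to pass from $C_c^\infty$ to all of $W^{1, 2}$ by density, using that $C_c^\infty$ is dense in $W^{1, 2}$ by definition. Both sides of the asserted identity are continuous $\complex$-linear functionals of $\phi \in W^{1, 2}$: the left-hand side equals $\angles{G_\lambda - G_\mu \mid \phi}_{\dot{W}^{1, 2}}$ (note that $G_\lambda - G_\mu$ is real-valued), which is continuous because $G_\lambda - G_\mu \in \dot{W}^{1, 2}$; and the right-hand side is bounded by $\norm{\mu G_\mu - \lambda G_\lambda}_{L^2} \norm{\phi}_{L^2}$ via Cauchy--Schwarz, which is finite since $G_\lambda, G_\mu \in L^2$ by \eqref{eqn:integrability-of-G_lambda} and $W^{1, 2} \hookrightarrow L^2$. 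Agreement of two continuous functionals on the dense subspace $C_c^\infty$ forces agreement on all of $W^{1, 2}$, completing the proof. The only genuinely delicate point is the one highlighted above—verifying the absence of a singular term at the origin—while everything else is a routine weak-formulation-plus-density argument.
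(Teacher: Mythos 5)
Your proposal is correct and follows exactly the route the paper intends: the paper presents this lemma as a corollary of \eqref{eqn:G_lambda-G_mu} together with the regularity from Lemma \ref{lem:G_lambda-G_mu-in-W^{2, 2}}, and your argument simply makes explicit the standard steps (cancellation of the two $\delta_0$'s so that the identity holds distributionally on all of $\real^3$, integration by parts against $C_c^\infty$, and extension by density using continuity of both sides on $W^{1,2}$). Your emphasis on the absence of a singular contribution at the origin is precisely the point the paper leaves implicit.
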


It follows from \eqref{eqn:Green's-fct} that
\[
\int
	\phi \parens{x}
	\parens*{- \Delta G_\lambda \parens{x} + \lambda G_\lambda \parens{x}} 
\dif x
=
\phi \parens{0}
\]
for every $\phi \in W^{2, 2}$. In fact, a similar identity holds when
$\parens{- \Delta + \lambda}$ acts on $\phi$ instead of on $G_\lambda$.

\begin{lem}
\label{lem:symmetric}
It holds that
\[
\angles{- \Delta \phi + \lambda \phi \mid G_\lambda}_{L^2}
=
\phi \parens{0}
\]
for every $\lambda \in \ooi{0, \infty}$ and $\phi \in W^{2, 2}$.
\end{lem}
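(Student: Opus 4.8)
The plan is to prove the identity first for $\phi \in C_c^\infty$, where it reduces to the defining distributional equation for $G_\lambda$, and then to extend it to all of $W^{2, 2}$ by density and continuity. The decisive structural point is that this route never requires differentiating the singular function $G_\lambda$ classically: $G_\lambda$ lies in $L^2$ but not in $W^{1, 2}$ (its gradient fails to be square-integrable near the origin), so a direct Green's-identity computation would force one to track boundary terms on shrinking spheres around $0$. The density argument sidesteps this entirely, packaging the singularity once and for all into \eqref{eqn:Green's-fct}.

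First I would treat the case $\phi \in C_c^\infty$. By the definition of the distributional Laplacian together with \eqref{eqn:Green's-fct}, i.e.\ $-\Delta G_\lambda + \lambda G_\lambda = \delta_0$, pairing $G_\lambda$ against the test function $(-\Delta + \lambda)\phi$ gives
\[
\int G_\lambda \parens*{- \Delta \phi + \lambda \phi} = \phi \parens{0}.
\]
This is the identity displayed just before the statement, now read with $(- \Delta + \lambda)$ acting on $\phi$ rather than on $G_\lambda$. Since $G_\lambda$ is real-valued, the left-hand side is exactly $\angles{- \Delta \phi + \lambda \phi \mid G_\lambda}_{L^2}$, so the claim holds for every $\phi \in C_c^\infty$.

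Next I would upgrade to arbitrary $\phi \in W^{2, 2}$ using that $C_c^\infty$ is dense in $W^{2, 2}$, for which it suffices that both sides of the asserted equality are continuous functionals of $\phi$ in the $W^{2, 2}$ norm. For the left-hand side, the map $- \Delta + \lambda \colon W^{2, 2} \to L^2$ is bounded (from $\norm{\Delta \phi}_{L^2}^2 \leq 3 \sum_{1 \leq j \leq 3} \norm{\partial_j^2 \phi}_{L^2}^2 \leq 3 \norm{\phi}_{W^{2, 2}}^2$ and $\norm{\phi}_{L^2} \leq \norm{\phi}_{W^{2, 2}}$), while $G_\lambda \in L^2$ by \eqref{eqn:integrability-of-G_lambda}; hence Cauchy--Schwarz yields $\abs{\angles{- \Delta \phi + \lambda \phi \mid G_\lambda}_{L^2}} \lesssim (1 + \lambda) \norm{\phi}_{W^{2, 2}} \norm{G_\lambda}_{L^2}$. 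For the right-hand side, the Morrey embedding $W^{2, 2} \hookrightarrow C^{0, \frac{1}{2}}$ flagged in Section~\ref{intro:notation} makes the point evaluation $\phi \mapsto \phi \parens{0}$ continuous. Choosing $\phi_n \in C_c^\infty$ with $\phi_n \to \phi$ in $W^{2, 2}$ and passing to the limit in the identity from the previous step then finishes the argument.

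I do not anticipate a genuine obstacle: the proof is routine once the right route is chosen. The only subtlety is precisely the choice to avoid applying $(- \Delta + \lambda)$ to $G_\lambda$ classically, and the single nonelementary ingredient — continuity of point evaluation on $W^{2, 2}$ — is exactly the Morrey embedding already in use throughout the paper.
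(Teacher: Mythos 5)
Your proof is correct, but it follows a genuinely different route from the paper's. The paper proves the identity directly for $\phi \in W^{2, 2}$: it excises the ball $B \parens{0, \eps}$, rewrites $- G_\lambda \Delta \phi$ via the product rule as $- \phi \Delta G_\lambda + \mathrm{div} \parens{\phi \nabla G_\lambda - G_\lambda \nabla \phi}$, applies the divergence theorem on $\real^3 \setminus B \parens{0, \eps}$, and tracks the two boundary integrals over the sphere of radius $\eps$ as $\eps \to 0^+$ --- precisely the computation you set out to avoid. Your argument instead reads \eqref{eqn:Green's-fct} as the tautological statement that $\int G_\lambda \parens{- \Delta \psi + \lambda \psi} = \psi \parens{0}$ for every test function $\psi$ (this \emph{is} the definition of the distributional identity, so no new work is hidden there) and then extends by density of $C_c^\infty$ in $W^{2, 2}$, using that $- \Delta + \lambda \colon W^{2, 2} \to L^2$ is bounded, that $G_\lambda \in L^2$, and that evaluation at $0$ is $W^{2, 2}$-continuous by the Morrey embedding; since the paper defines $W^{2, 2}$ as the completion of $C_c^\infty$, the density is immediate. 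What your route buys is robustness: it never has to make sense of $\phi$ and $\nabla \phi$ restricted to small spheres for a general $W^{2, 2}$ function, a point the paper's boundary-term analysis treats somewhat informally. What the paper's route buys is self-containedness: it effectively re-derives the distributional identity rather than quoting it. One cosmetic remark, shared with the paper's own statement: with the convention that the inner product is linear in its second entry, the left-hand side equals $\int \overline{\parens{- \Delta \phi + \lambda \phi}} G_\lambda$, so for complex-valued $\phi$ the natural right-hand side is $\overline{\phi \parens{0}}$; this is immaterial where the lemma is applied, namely to conclude $v \parens{0} = 0$.
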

\begin{proof}
Consider an $\eps \in \ooi{0, \infty}$. Due to the Chain Rule,
\[
- G_\lambda \Delta \phi
=
- \phi \Delta G_\lambda
+
\mathrm{div} \parens{\phi \nabla G_\lambda - G_\lambda \nabla \phi}
\]
a.e. in $\real^3$. It follows from the Divergence Theorem \cite[Theorem 6.3.5]{willemFunctionalAnalysisFundamentals2022} that
\begin{multline*}
-
\int_{\real^3 \setminus B \parens{0, \eps}}
	G_\lambda \parens{x} \Delta \phi \parens{x}
\dif x
=
-
\int_{\real^3 \setminus B \parens{0, \eps}}
	\phi \parens{x} \Delta G_\lambda \parens{x}
\dif x
+
\\
+
\frac{e^{- \sqrt{\lambda} \eps}}{4 \pi \eps}
\int_{\eps \mathbb{S}^2}
	\frac{\nabla \phi \parens{x} \cdot x}{\abs{x}}
\dif \sigma_\eps \parens{x}
+
\frac{\parens{\sqrt{\lambda} \eps + 1} e^{- \sqrt{\lambda} \eps}}{4 \pi \eps^2}
\int_{\eps \mathbb{S}^2}
	\phi \parens{x}
\dif \sigma_\eps \parens{x}
\end{multline*}
where $\sigma_\eps$ denotes the surface measure of $\mathbb{S}^2$. A change of variable shows that
\[
\frac{e^{- \sqrt{\lambda} \eps}}{4 \pi \eps}
\int_{\eps \mathbb{S}^2}
	\frac{\nabla \phi \parens{x} \cdot x}{\abs{x}}
\dif \sigma_\eps \parens{x}
+
\frac{\parens{\sqrt{\lambda} \eps + 1} e^{- \sqrt{\lambda \eps}}}{4 \pi \eps^2}
\int_{\eps \mathbb{S}^2}
	\phi \parens{x}
\dif \sigma_\eps \parens{x}
\xrightarrow[\eps \to 0^+]{}
0.
\]
The result follows from \eqref{eqn:Green's-fct} because
\begin{align*}
\angles{- \Delta \phi + \lambda \phi \mid G_\lambda}_{L^2}
&=
\lim_{\eps \to 0^+}
\int_{\real^3 \setminus B \parens{0, \eps}}
	G_\lambda \parens{x} \parens*{- \Delta \phi \parens{x} + \lambda \phi \parens{x}} 
\dif x;
\\
&=
\lim_{\eps \to 0^+}
\int_{\real^3 \setminus B \parens{0, \eps}}
	\phi \parens{x}
	\parens*{- \Delta G_\lambda \parens{x} + \lambda G_\lambda \parens{x}} 
\dif x;
\\
&=
\int
	\phi \parens{x}
	\parens*{- \Delta G_\lambda \parens{x} + \lambda G_\lambda \parens{x}} 
\dif x;
\\
&=
\phi \parens{0}.
\end{align*}
\end{proof}

\subsection{Properties of $S_\alpha$, $H_\alpha$ and $W^{1, 2}_\alpha$; a useful inequality}
\label{energy-space}

The following result shows that the definition on Section \ref{intro:energy-space} gives a well-defined sesquilinear form
$S_\alpha \colon \Dom \brackets{\op} \times \Dom \brackets{\op} \to \complex$.

\begin{lem}
\label{lem:S_alpha-is-well-defined}
The sesquilinear form $S_\alpha$ is well-defined.
\end{lem}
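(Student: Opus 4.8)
The content of the lemma is that the value of $S_\alpha \parens{u, v}$ does not depend on the parameter $\lambda \in \ooi{0, \infty}$ used to represent $u$ and $v$; for each fixed $\lambda$ the defining formula is manifestly sesquilinear in $\parens{u, v}$, so once representation-independence is established we obtain a well-defined sesquilinear form. The plan is therefore to fix $\lambda, \mu \in \ooi{0, \infty}$ and to show that the expression for $S_\alpha \parens{u, v}$ computed from the $\lambda$-representations agrees with the one computed from the $\mu$-representations.

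First I would record the structural input. The charge $q_u$ in a representation $u = \phi_\lambda + q_u G_\lambda$ is intrinsic: since $G_\lambda - G_\mu \in W^{1, 2}$ by Lemma \ref{lem:G_lambda-G_mu-in-W^{2, 2}} while no nonzero multiple of $G_\nu$ lies in $W^{1, 2}$, any two representations of $u$ must carry the same charge, and Corollary \ref{cor:nonuniqueness} then yields $\phi_\mu = \phi_\lambda + q_u \parens{G_\lambda - G_\mu}$ and likewise $\psi_\mu = \psi_\lambda + q_v \parens{G_\lambda - G_\mu}$. Writing $g := G_\lambda - G_\mu \in W^{2, 2}$, I would substitute these into the formula evaluated at $\mu$, expand using the sesquilinearity of $\angles{\cdot \mid \cdot}_{\dot{W}^{1, 2}}$ and $\angles{\cdot \mid \cdot}_{L^2}$, and organize the resulting terms by their degree in the charges $q_u, q_v$, keeping in mind that $\angles{u \mid v}_{L^2}$ is itself representation-independent.

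The degree-zero terms collapse immediately to $\angles{\phi_\lambda \mid \psi_\lambda}_{\dot{W}^{1, 2}}$, matching the $\lambda$-expression. For the degree-one terms I would invoke Lemma \ref{lem:G_lambda-G_mu} to rewrite the Dirichlet pairings $\angles{\phi_\lambda \mid g}_{\dot{W}^{1, 2}}$ and $\angles{g \mid \psi_\lambda}_{\dot{W}^{1, 2}}$ as $L^2$ pairings against $\mu G_\mu - \lambda G_\lambda$; these then combine with the $\mu$-weighted $L^2$ terms and with the expansion of $\angles{u \mid v}_{L^2}$ so that every occurrence of $G_\mu$ cancels and one is left with exactly the degree-one part of the $\lambda$-expression. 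The degree-two terms are the delicate ones: applying Lemma \ref{lem:G_lambda-G_mu} to $\norm{g}_{\dot{W}^{1, 2}}^2$ and collecting everything reduces the required identity to a relation among the scalar integrals $\norm{G_\nu}_{L^2}^2 = \frac{1}{8 \pi \sqrt{\nu}}$ and $\int G_\lambda G_\mu$, and here I would use the explicit evaluation $\int G_\lambda G_\mu = \frac{1}{4 \pi \parens{\sqrt{\lambda} + \sqrt{\mu}}}$ — obtained directly from the formula for $G_\nu$ by integrating in spherical coordinates — to verify that all remaining $\mu$-dependence cancels.

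I expect the degree-two computation to be the main obstacle, both because that is where the nontrivial Green's-function identity enters and because one must be attentive to complex conjugation: Lemma \ref{lem:G_lambda-G_mu} is a bilinear statement about the real functions $G_\nu$, so it has to be inserted carefully inside the sesquilinear pairings. A convenient streamlining would be to prove representation-independence first only on the diagonal $u = v$, where $q_u \overline{q_v} = \abs{q}^2$ removes every conjugation subtlety and the expression is exactly $H_\alpha \parens{u}$, and then to recover the full form from its diagonal via the complex polarization identity, valid because $S_\alpha$ is sesquilinear for each fixed $\lambda$; this concentrates essentially all of the work onto the single identity $\int G_\lambda G_\mu = \frac{1}{4 \pi \parens{\sqrt{\lambda} + \sqrt{\mu}}}$.
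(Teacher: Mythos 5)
Your proposal is correct and follows essentially the same route as the paper: substitute $\phi_\mu = \phi_\lambda + q_u\parens{G_\lambda - G_\mu}$, use Lemma \ref{lem:G_lambda-G_mu} to convert the Dirichlet cross-terms into $L^2$ pairings, and close the computation with the explicit values $\norm{G_\nu}_{L^2}^2 = \frac{1}{8\pi\sqrt{\nu}}$ and $\int G_\lambda G_\mu = \frac{1}{4\pi\parens{\sqrt{\lambda}+\sqrt{\mu}}}$, exactly the identities the paper's proof uses to derive its two intermediate equalities. The organization by degree in the charges (and the optional polarization shortcut) is only a cosmetic repackaging of the same cancellation.
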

\begin{proof}
We want to prove that
\begin{multline}
\label{prelim:aux:0}
\angles{\phi_\lambda \mid \psi_\lambda}_{\dot{W}^{1, 2}}
-
\lambda
\angles{\overline{q_v} \phi_\lambda + q_u \overline{\psi_\lambda} \mid G_\lambda}_{L^2}
+
\frac{\sqrt{\lambda}}{8 \pi} q_u \overline{q_v}
=
\\
=
\angles{\phi_\mu \mid \psi_\mu}_{\dot{W}^{1, 2}}
-
\mu
\angles{\overline{q_v} \phi_\mu + q_u \overline{\psi_\mu} \mid G_\mu}_{L^2}
+
\frac{\sqrt{\mu}}{8 \pi} q_u \overline{q_v},
\end{multline}
where
\[u = \phi_\lambda + q_u G_\lambda = \phi_\mu + q_u G_\mu \in \Dom \brackets{\op}\]
and
\[v = \psi_\lambda + q_v G_\lambda = \psi_\mu + q_v G_\mu \in \Dom \brackets{\op}.\]
In view of the equality
$\phi_\lambda - \phi_\mu = q_u \parens{G_\mu - G_\lambda}$,
Lemma \ref{lem:G_lambda-G_mu} shows that
\[
\angles{\phi_\lambda \mid \psi_\lambda}_{\dot{W}^{1, 2}}
-
\lambda
\angles{q_u G_\lambda \mid \psi_\lambda}_{L^2}
=
-
\mu
\angles{q_u G_\mu \mid \psi_\lambda}_{L^2}
+
\angles{\phi_\mu \mid \psi_\lambda}_{\dot{W}^{1, 2}}.
\]
Similarly,
\[
\angles{G_\mu \mid \psi_\lambda}_{L^2}
=
\frac{q_v}{8 \pi \sqrt{\mu}}
-
\frac{q_v}{4 \pi \parens{\sqrt{\lambda} + \sqrt{\mu}}}
+
\angles{G_\mu \mid \psi_\mu}_{L^2}.
\]
Therefore,
\begin{multline}
\label{prelim:aux:1}
\angles{\phi_\lambda \mid \psi_\lambda}_{\dot{W}^{1, 2}}
-
\lambda
\angles{q_u G_\lambda \mid \psi_\lambda}_{L^2}
=
\\
=
\overline{q_u} q_v
\parens*{
	\frac{\mu}{4 \pi \parens*{\sqrt{\lambda} + \sqrt{\mu}}}
	-
	\frac{\sqrt{\mu}}{8 \pi}
}
-
\mu
\angles{q_u G_\mu \mid \psi_\mu}_{L^2}
+
\angles{\phi_\mu \mid \psi_\lambda}_{\dot{W}^{1, 2}}.
\end{multline}
An analogous argument shows that
\begin{multline}
\label{prelim:aux:2}
\angles{\phi_\mu \mid \psi_\mu}_{\dot{W}^{1, 2}}
-
\mu
\angles{\phi_\mu \mid q_v G_\mu}_{L^2}
=
\\
=
\overline{q_u} q_v
\parens*{
	\frac{\lambda}{4 \pi \parens*{\sqrt{\lambda} + \sqrt{\mu}}}
	-
	\frac{\sqrt{\lambda}}{8 \pi}
}
-
\lambda
\angles{\phi_\lambda \mid q_v G_\lambda}_{L^2}
+
\angles{\phi_\mu \mid \psi_\lambda}_{\dot{W}^{1, 2}}.
\end{multline}
Finally, \eqref{prelim:aux:0} follows directly from \eqref{prelim:aux:1} and \eqref{prelim:aux:2}.
\end{proof}

The next result follows directly from the definition of $H_\alpha$.

\begin{lem}
\label{lem:properties-of-H_alpha}
\begin{enumerate}
\item
Given $\delta \in \complex$ and $u \in W^{1, 2}_\alpha$,
$H_\alpha \parens{\delta u} = \abs{\delta}^2 H_\alpha \parens{u}$.
\item
If
$u_n \rightharpoonup u_\infty$ in $W^{1, 2}_\alpha$ as $n \to \infty$, then
\[
H_\alpha \parens{u_n - u_\infty}
-
H_\alpha \parens{u_n}
+
H_\alpha \parens{u_\infty}
\xrightarrow[n \to \infty]{}
0.
\]
\end{enumerate}
\end{lem}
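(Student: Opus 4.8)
The plan is to handle the two items separately, both as direct consequences of the fact that $H_\alpha$ is the quadratic form associated with the sesquilinear form $S_\alpha$, which is well defined and sesquilinear by Lemma \ref{lem:S_alpha-is-well-defined}, together with the identity $S_\alpha \parens{u, v} = \angles{u \mid v}_{W^{1, 2}_\alpha} - \angles{u \mid v}_{L^2}$ coming from the definition of $\angles{\cdot \mid \cdot}_{W^{1, 2}_\alpha}$.

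For the first item I would argue by homogeneity. Fixing a representation $u = \phi_\lambda + q G_\lambda$, the scalar multiple $\delta u$ admits, for the same $\lambda$, the representation $\delta u = \parens{\delta \phi_\lambda} + \parens{\delta q} G_\lambda$ with $\delta \phi_\lambda \in W^{1, 2}$ and $\delta q \in \complex$. Substituting into the explicit formula for $H_\alpha$ and pulling the factor $\abs{\delta}^2$ out of each of the terms $\norm{\phi_\lambda}_{\dot{W}^{1, 2}}^2$, $\lambda \parens{\norm{\phi_\lambda}_{L^2}^2 - \norm{u}_{L^2}^2}$ and $\parens{\alpha + \frac{\sqrt{\lambda}}{4 \pi}} \abs{q}^2$ yields $H_\alpha \parens{\delta u} = \abs{\delta}^2 H_\alpha \parens{u}$; equivalently, and more cleanly, $H_\alpha \parens{\delta u} = S_\alpha \parens{\delta u, \delta u} = \abs{\delta}^2 S_\alpha \parens{u, u} = \abs{\delta}^2 H_\alpha \parens{u}$ by sesquilinearity.

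For the second item the key step is the polarization expansion. By sesquilinearity of $S_\alpha$,
\[
H_\alpha \parens{u_n - u_\infty}
=
H_\alpha \parens{u_n}
-
S_\alpha \parens{u_n, u_\infty}
-
S_\alpha \parens{u_\infty, u_n}
+
H_\alpha \parens{u_\infty},
\]
so that the quantity to be estimated equals $2 H_\alpha \parens{u_\infty} - S_\alpha \parens{u_n, u_\infty} - S_\alpha \parens{u_\infty, u_n}$. It then suffices to show that $S_\alpha \parens{u_n, u_\infty} \to H_\alpha \parens{u_\infty}$ and $S_\alpha \parens{u_\infty, u_n} \to H_\alpha \parens{u_\infty}$ as $n \to \infty$. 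Writing $S_\alpha \parens{u_n, u_\infty} = \angles{u_n \mid u_\infty}_{W^{1, 2}_\alpha} - \angles{u_n \mid u_\infty}_{L^2}$, I would treat the two inner products separately: the first converges to $\norm{u_\infty}_{W^{1, 2}_\alpha}^2$ directly from the definition of the weak convergence $u_n \rightharpoonup u_\infty$ in $W^{1, 2}_\alpha$ (testing against the fixed vector $u_\infty$), while the second converges to $\norm{u_\infty}_{L^2}^2$ because the continuous embedding $W^{1, 2}_\alpha \hookrightarrow L^2$ transfers the weak convergence to $L^2$ (a bounded linear map is weak-to-weak continuous). Hence $S_\alpha \parens{u_n, u_\infty} \to \norm{u_\infty}_{W^{1, 2}_\alpha}^2 - \norm{u_\infty}_{L^2}^2 = H_\alpha \parens{u_\infty}$, and the analogous computation (or the Hermitian symmetry of $S_\alpha$) gives $S_\alpha \parens{u_\infty, u_n} \to H_\alpha \parens{u_\infty}$. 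Substituting produces the limit $0$.

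The only point requiring care, rather than a genuine obstacle, is the sesquilinear bookkeeping: one must invoke that $S_\alpha$ is genuinely sesquilinear — not merely that $H_\alpha$ is its restriction to the diagonal — in order to justify the cross-term expansion, and one must correctly pass the weak convergence from $W^{1, 2}_\alpha$ to $L^2$ through the embedding. No compactness or quantitative estimate enters; everything reduces to the algebra of the quadratic form and the definition of weak convergence.
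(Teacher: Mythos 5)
Your proof is correct, and it fills in exactly the argument the paper leaves implicit: the paper states Lemma \ref{lem:properties-of-H_alpha} with no proof, remarking only that it ``follows directly from the definition of $H_\alpha$,'' and your polarization expansion of $S_\alpha$ combined with weak convergence in $W^{1,2}_\alpha$ (transferred to $L^2$ through the continuous embedding) is the standard and intended way to verify both items.
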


The following result about weak convergence in $W^{1, 2}_\alpha$ is a corollary of the fact that given a fixed $\lambda \in \ooi{0, \infty}$,
\[
W^{1, 2}_\alpha \ni \phi_\lambda + q G_\lambda \mapsto q \in \complex
\quad \text{and} \quad
W^{1, 2}_\alpha \ni \phi_\lambda + q G_\lambda \mapsto \phi_\lambda \in W^{1, 2}
\]
are continuous linear mappings.
\begin{lem}
\label{lem:weak-convergence}
Suppose that
$u_\infty = \phi_{\lambda, \infty} + q_\infty G_\lambda \in W^{1, 2}_\alpha$
and
\[\set{u_n = \phi_{\lambda, n} + q_n G_\lambda}_{n \in \nat} \subset W^{1, 2}_\alpha.\]
The following equivalence holds:
$u_n \rightharpoonup u_\infty$ in $W^{1, 2}_\alpha$ as $n \to \infty$
if, and only if,
\[
\phi_{\lambda, n} \xrightharpoonup[n \to \infty]{W^{1, 2}} \phi_{\lambda, \infty}
\quad \text{and} \quad
q_n \xrightarrow[n \to \infty]{} q_\infty.
\]
\end{lem}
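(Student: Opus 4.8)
The plan is to exploit the two continuous linear projections introduced just above the statement: the charge map $P \colon W^{1, 2}_\alpha \to \complex$ given by $P(\phi_\lambda + q G_\lambda) = q$, and the regular-part map $Q \colon W^{1, 2}_\alpha \to W^{1, 2}$ given by $Q(\phi_\lambda + q G_\lambda) = \phi_\lambda$ (both well defined at fixed $\lambda$ by uniqueness of representation). Since a continuous linear map between Banach spaces is automatically weak-to-weak continuous, both directions of the equivalence reduce to this general fact together with elementary stability properties of weak convergence.

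For the forward implication, I would assume $u_n \rightharpoonup u_\infty$ in $W^{1, 2}_\alpha$ and simply apply $P$ and $Q$. Weak-to-weak continuity yields $q_n = P(u_n) \rightharpoonup P(u_\infty) = q_\infty$ in $\complex$ and $\phi_{\lambda, n} = Q(u_n) \rightharpoonup Q(u_\infty) = \phi_{\lambda, \infty}$ in $W^{1, 2}$. Because $\complex$ is finite-dimensional, weak convergence there coincides with norm convergence, so $q_n \to q_\infty$, which is exactly the claimed conclusion.

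For the reverse implication, I would decompose
\[
u_n - u_\infty
=
\parens*{\phi_{\lambda, n} - \phi_{\lambda, \infty}}
+
\parens*{q_n - q_\infty} G_\lambda
\]
and treat the two summands separately. The second summand converges to $0$ \emph{strongly} in $W^{1, 2}_\alpha$, since $q_n - q_\infty \to 0$ in $\complex$ while $G_\lambda \in W^{1, 2}_\alpha$ is a fixed vector; strong convergence implies weak convergence. For the first summand, the continuous embedding $W^{1, 2} \hookrightarrow W^{1, 2}_\alpha$, being a continuous linear map, transports the weak convergence $\phi_{\lambda, n} - \phi_{\lambda, \infty} \rightharpoonup 0$ from $W^{1, 2}$ to $W^{1, 2}_\alpha$. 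Since the sum of two weakly null sequences is weakly null, I conclude $u_n - u_\infty \rightharpoonup 0$, i.e.\ $u_n \rightharpoonup u_\infty$ in $W^{1, 2}_\alpha$.

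There is no serious obstacle here: the argument merely packages standard facts (continuous linear maps preserve weak convergence, weak convergence is additive, finite-dimensional weak convergence is strong). The only point requiring care is that the decomposition $u = \phi_\lambda + q G_\lambda$ at fixed $\lambda$ be unique, so that $P$ and $Q$ are well defined; this holds because $G_\lambda \notin W^{1, 2}$, whence no nonzero multiple of $G_\lambda$ can be absorbed into the regular part.
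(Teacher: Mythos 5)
Your proof is correct and follows essentially the same route as the paper, which presents the lemma precisely as a corollary of the continuity of the two coordinate maps $u \mapsto q$ and $u \mapsto \phi_\lambda$ at fixed $\lambda$. You merely spell out the details (weak-to-weak continuity of continuous linear maps, weak equals strong convergence in $\complex$, and the decomposition for the converse), all of which are sound.
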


We also need the following generalization of the Gagliardo--Nirenberg inequality.

\begin{prop}[{\cite[Proposition II.1]{adamiExistenceStructureRobustness2022}}]
\label{prelim:prop:GN}
Suppose that $2 < r < 3$. There exists $K_r > 0$ such that
\[
\norm{u}_{L^r}^r
\leq
K_r
\parens*{
	\norm{\phi}_{\dot{W}^{1, 2}}^{\frac{3 \parens{r - 2}}{2}}
	\norm{\phi}_{L^2}^{\frac{6 - r}{2}}
	+
	\frac{\abs{q}^r}{\lambda^{\frac{3 - r}{2}}}
}
\]
for every $u = \phi + q G_\lambda \in W^{1, 2}_\alpha$. We can also associate each
$\eps \in \ooi{0, \infty}$ with a $K_{r, \eps} \in \ooi{0, \infty}$ such that
\[
\norm{u}_{L^r}^r
\leq
K_{r, \eps}
\parens*{
	\norm{\phi}_{\dot{W}^{1, 2}}^{\frac{3 \parens{r - 2}}{2}}
	\norm{u}_{L^2}^{\frac{6 - r}{2}}
	+
	\abs{q}^{3 \parens{r - 2}}
	\norm{u}_{L^2}^{2 \parens{3 - r}}
}
\]
for every
\[
u
=
\phi + q G_{\eps \abs{q}^4 / \norm{u}_{L^2}^4}
\in W^{1, 2}_\alpha \setminus W^{1, 2}.
\]
\end{prop}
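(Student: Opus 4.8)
The plan is to treat $u = \phi + q G_\lambda$ by splitting it into its $W^{1, 2}$-part $\phi$ and its singular part $q G_\lambda$, estimating each separately and then recombining. For the first inequality I would start from the triangle inequality in $L^r$ together with the elementary bound $(a + b)^r \le 2^{r - 1}(a^r + b^r)$ (valid since $r \ge 1$) to obtain
\[
\norm{u}_{L^r}^r \le 2^{r - 1}\parens*{\norm{\phi}_{L^r}^r + \abs{q}^r \norm{G_\lambda}_{L^r}^r}.
\]
The factor $\norm{G_\lambda}_{L^r}^r$ is finite precisely because $2 < r < 3$ (see \eqref{eqn:integrability-of-G_lambda}) and equals $\norm{G_1}_{L^r}^r \lambda^{- (3 - r) / 2}$, which already produces the second summand $\abs{q}^r \lambda^{- (3 - r) / 2}$ of the claim.

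For the term $\norm{\phi}_{L^r}^r$ I would invoke the classical Gagliardo--Nirenberg inequality on $\real^3$, namely $\norm{\phi}_{L^r} \le C_r \norm{\phi}_{\dot{W}^{1, 2}}^{\theta} \norm{\phi}_{L^2}^{1 - \theta}$ with interpolation exponent $\theta = \frac{3 (r - 2)}{2 r}$, which holds for every $\phi \in W^{1, 2}$ by density of $C_c^\infty$. Raising to the power $r$ produces the exponents $r \theta = \frac{3 (r - 2)}{2}$ on $\norm{\phi}_{\dot{W}^{1, 2}}$ and $r (1 - \theta) = \frac{6 - r}{2}$ on $\norm{\phi}_{L^2}$, matching the statement exactly. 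Taking $K_r := 2^{r - 1} \max\parens*{C_r^r, \norm{G_1}_{L^r}^r}$ then closes the first inequality.

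For the second inequality the point is the specific choice $\lambda = \eps \abs{q}^4 / \norm{u}_{L^2}^4$, which is engineered to make the singular part $L^2$-comparable to $u$. Substituting this $\lambda$ into the two summands of the first inequality, the second summand becomes
\[
\frac{\abs{q}^r}{\lambda^{(3 - r) / 2}} = \eps^{- (3 - r) / 2}\, \abs{q}^{3 (r - 2)}\, \norm{u}_{L^2}^{2 (3 - r)},
\]
which is exactly the second target term after using $r - 2 (3 - r) = 3 (r - 2)$. To convert the first summand I would observe that with this $\lambda$ and $\norm{G_\lambda}_{L^2}^2 = (8 \pi \sqrt{\lambda})^{- 1}$ one has $\abs{q} \norm{G_\lambda}_{L^2} = (8 \pi)^{- 1/2} \eps^{- 1/4} \norm{u}_{L^2}$, so that $\norm{\phi}_{L^2} \le \norm{u}_{L^2} + \abs{q} \norm{G_\lambda}_{L^2} \le C_\eps \norm{u}_{L^2}$ with $C_\eps := 1 + (8 \pi)^{- 1/2} \eps^{- 1/4}$. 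Replacing $\norm{\phi}_{L^2}^{(6 - r) / 2}$ by $C_\eps^{(6 - r) / 2} \norm{u}_{L^2}^{(6 - r) / 2}$ and setting $K_{r, \eps} := K_r \max\parens*{C_\eps^{(6 - r) / 2}, \eps^{- (3 - r) / 2}}$ completes the argument.

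The computations are essentially bookkeeping; the one genuinely substantive point is recognizing why the scaling $\lambda \sim \abs{q}^4 / \norm{u}_{L^2}^4$ is forced, since $\norm{G_\lambda}_{L^2} \sim \lambda^{- 1/4}$ makes this the unique power rendering $\abs{q} \norm{G_\lambda}_{L^2}$ a fixed multiple of $\norm{u}_{L^2}$, which is precisely what lets $\norm{\phi}_{L^2}$ be traded for $\norm{u}_{L^2}$. I would expect no real obstacle beyond carefully tracking the exponents of $\abs{q}$, $\lambda$ and the two $L^2$-norms.
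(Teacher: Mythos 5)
This proposition is quoted in the paper from Adami, Boni, Carlone \& Tentarelli's Proposition II.1 and is not reproved there, so there is no in-paper proof to compare against; your argument is correct and complete. The triangle inequality with $\parens{a + b}^r \leq 2^{r - 1} \parens{a^r + b^r}$, the scaling identity $\norm{G_\lambda}_{L^r}^r = \norm{G_1}_{L^r}^r \lambda^{- \parens{3 - r} / 2}$, and the classical Gagliardo--Nirenberg inequality with $\theta = \frac{3 \parens{r - 2}}{2 r}$ give the first bound, and the substitution $\lambda = \eps \abs{q}^4 / \norm{u}_{L^2}^4$ together with $\abs{q} \norm{G_\lambda}_{L^2} = \parens{8 \pi}^{- 1/2} \eps^{- 1/4} \norm{u}_{L^2}$ (which requires $q \neq 0$, guaranteed by $u \notin W^{1, 2}$) correctly converts it into the second; all exponents check out. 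This is essentially the standard argument behind the cited result.
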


\section{Proofs of results in Section \ref{intro:existence-of-soln}}
\label{abstract}

\subsection{Avoiding vanishing}
\label{vanishing}

\begin{proof}[Proof of Lemma \ref{lem:vanishing}]
\emph{1. Setup.}
By contradiction, suppose that there exists
\[
\set{u_n = \phi_{\lambda, n} + q_n G_\lambda}_{n \in \nat}
\subset
W^{1, 2}_\alpha \parens{\rho^2}
\]
such that $I_\alpha \parens{u_n} \to \I_\alpha \parens{\rho^2}$ and
$q_n \to 0$ as $n \to \infty$. Due to the limit $q_n \to 0$ as $n \to \infty$, it follows from \eqref{cond:vanishing:2} that
\begin{equation}
\label{vanishing:aux:10}
\parens{\norm{\phi_{\lambda, n}}_{W^{1, 2}}}_{n \in \nat}
~ \text{is bounded}.
\end{equation}
\\ \\
\noindent \emph{2. A preliminary result.}
We want to prove that
\begin{equation}
\label{vanishing:aux:11}
I \parens{\xi_n} - I_\alpha \parens{u_n}
\xrightarrow[n \to \infty]{}
0,
\end{equation}
where
\[
\xi_n
=
\frac{\rho}{\norm{\phi_{\lambda, n}}_{L^2}} \phi_{\lambda, n}
\in
W^{1, 2} \parens{\rho^2} \subset W^{1, 2}_\alpha \parens{\rho^2}
\]
for every $n \in \nat$. On one hand, it follows from the limit $q_n \to 0$ as
$n \to \infty$ that
\[
\norm{\xi_n}_{\dot{W}^{1, 2}}^2 - H_\alpha \parens{u_n}
\xrightarrow[n \to \infty]{}
0.
\]
On the other hand, it follows from \eqref{intro:eqn:T_1}, \eqref{intro:eqn:T_2} and \eqref{vanishing:aux:10} that
\[
\abs*{T \parens{\xi_n} - T \parens{u_n}}
\leq
\parens*{
	\max_{t \in \cci{0, 1}}
	\norm*{T' \parens{\xi_n + t u_n}}_{\mathcal{L} \parens{W^{1, 2}_\alpha, \real}}
}
\norm{u_n - \xi_n}_{W^{1, 2}_\alpha}
\xrightarrow[n \to \infty]{}
0.
\]
As such, we deduce that \eqref{vanishing:aux:11} holds.
\\ \\
\noindent \emph{3. The contradiction.}
Due to the limit $I_\alpha \parens{u_n} \to \I_\alpha \parens{\rho^2}$ as $n \to \infty$, it follows from \eqref{vanishing:aux:11} that
\[
\I \parens{\rho^2}
\leq
\lim_{n \to \infty} I \parens{\xi_n}
=
\lim_{n \to \infty} I_\alpha \parens{u_n}
=
\I_\alpha \parens{\rho^2},
\]
which contradicts \eqref{cond:vanishing:1}.
\end{proof}

\begin{proof}[Proof of Lemma \ref{lem:calI_alpha<calI}]
The nonlinear functional $I_\alpha$ extends $I$, so
$\I_\alpha \parens{\rho^2} \leq \I \parens{\rho^2}$. By contradiction, suppose that
$\I_\alpha \parens{\rho^2} = \I \parens{\rho^2}$. The function $v$ solves \eqref{eqn:AMP:2}, so it also solves \eqref{eqn:AMP}. As
$\parens{I_\alpha|_{W^{1, 2} \parens{\rho^2}}}' \parens{v} = 0$, it follows from the Lagrange Multiplier Theorem that there exists $\omega \in \real$ such that
\[
I_\alpha' \parens{v} \brackets{u}
+
\omega \Re \brackets*{\int \overline{v \parens{x}} u \parens{x} \dif x}
=
0
\]
for every $u \in W^{1, 2}_\alpha$. That is,
\begin{equation}
\label{vanishing:aux:0}
\Re \brackets*{
	S_\alpha \parens{v, u}
	+
	\int
		\parens*{\overline{z_v \parens{x}} + \omega \overline{v \parens{x}}}
		u \parens{x}
	\dif x
}
=
0
\quad \text{for every} \quad u \in W^{1, 2}_\alpha.
\end{equation}
Integration against functions in $W^{1, 2}$ shows that
\begin{equation}
\label{vanishing:aux:1}
- \Delta v + z_v + \omega v = 0 \quad \text{in} \quad L^2.
\end{equation}
In view of \eqref{vanishing:aux:1}, it suffices to take $u = G_\lambda$ in \eqref{vanishing:aux:0} to obtain
\[
\angles{- \Delta v + \lambda v \mid G_\lambda}_{L^2}
=
0.
\]
An application of Lemma \ref{lem:symmetric} shows that $v \parens{0} = 0$, which is absurd because we suppose that $v \parens{0} \neq 0$.
\end{proof}

\subsection{Avoiding dichotomy}
\label{dichotomy}

Instead of directly proving Lemma \ref{dich:lem}, we will prove the following more precise result which consists of an adaptation of \cite[Lemma 2.1]{bellazziniScalingPropertiesFunctionals2011}.

\begin{lem}
\label{dich:lem:2}
Suppose that $r \in \ooi{0, \infty}$ is such that \eqref{eqn:SSC} holds. Suppose further that $\parens{u_n}_{n \in \nat}$ is a minimizing sequence of
$I_\alpha|_{W^{1, 2}_\alpha \parens{r^2}}$ such that
$u_n \rightharpoonup u_\infty$ in $W^{1, 2}_\alpha$ as
$n \to \infty$, $\mu := \norm{u_\infty}_{L^2} > 0$, \eqref{eqn:dichotomy:T_1} and \eqref{eqn:dichotomy:T_2} are satisfied. Then $\mu = r$. If we suppose further that
$\parens{u_n}_{n \in \nat}$ is a Palais--Smale sequence of
$I_\alpha|_{W^{1, 2}_\alpha \parens{r^2}}$ and \eqref{eqn:dichotomy:T_3} is satisfied, then $\norm{u_n - u_\infty}_{W^{1, 2}_\alpha} \to 0$ as $n \to \infty$ up to subsequence. In particular, $I_\alpha \parens{u_\infty} = \I_\alpha \parens{r^2}$.
\end{lem}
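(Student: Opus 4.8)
The plan is to prove this in two stages, mirroring the structure of the statement. First I would establish that $\mu = r$ using the strict sub-additivity condition \eqref{eqn:SSC} together with the splitting hypotheses \eqref{eqn:dichotomy:T_1} and \eqref{eqn:dichotomy:T_2}; this rules out dichotomy. Then, assuming the Palais--Smale property and \eqref{eqn:dichotomy:T_3}, I would upgrade weak convergence to strong convergence in $W^{1, 2}_\alpha$.

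For the first stage, write $v_n := u_n - u_\infty$ and set $\mu = \norm{u_\infty}_{L^2} \in \ooi{0, r}$, aiming for a contradiction if $\mu < r$. The key decomposition is of the energy. Using part (2) of Lemma \ref{lem:properties-of-H_alpha}, we have $H_\alpha \parens{u_n} = H_\alpha \parens{u_\infty} + H_\alpha \parens{v_n} + o \parens{1}$, and combining this with the splitting estimate \eqref{eqn:dichotomy:T_1} for $T_\alpha$ gives
\[
\I_\alpha \parens{r^2}
=
\lim_{n \to \infty} I_\alpha \parens{u_n}
\geq
I_\alpha \parens{u_\infty}
+
\liminf_{n \to \infty} I_\alpha \parens{v_n}.
\]
Since $u_\infty \in W^{1, 2}_\alpha \parens{\mu^2}$, the first term is bounded below by $\I_\alpha \parens{\mu^2}$. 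For the second term, the difficulty is that $\norm{v_n}_{L^2}^2 \to r^2 - \mu^2$ only in the limit, so $v_n$ does not lie exactly on the constraint sphere $W^{1, 2}_\alpha \parens{r^2 - \mu^2}$. This is precisely why the rescaling $\delta_n v_n$ and hypothesis \eqref{eqn:dichotomy:T_2} are needed: rescaling by $\delta_n \to 1$ places $\delta_n v_n$ exactly on the correct sphere, and since $H_\alpha$ is quadratic (part (1) of Lemma \ref{lem:properties-of-H_alpha}) while \eqref{eqn:dichotomy:T_2} controls the change in $T_\alpha$, one gets $I_\alpha \parens{\delta_n v_n} - I_\alpha \parens{v_n} \to 0$, whence $\liminf_n I_\alpha \parens{v_n} \geq \I_\alpha \parens{r^2 - \mu^2}$. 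Assembling the pieces yields $\I_\alpha \parens{r^2} \geq \I_\alpha \parens{\mu^2} + \I_\alpha \parens{r^2 - \mu^2}$, contradicting \eqref{eqn:SSC}. Hence $\mu = r$.

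For the second stage, knowing $\mu = r$ gives $\norm{u_n}_{L^2} \to \norm{u_\infty}_{L^2}$, so $u_n \to u_\infty$ strongly in $L^2$ and therefore $\norm{v_n}_{L^2} \to 0$. It remains to prove $H_\alpha \parens{v_n} \to 0$, since $\norm{v_n}_{W^{1, 2}_\alpha}^2 = \norm{v_n}_{L^2}^2 + H_\alpha \parens{v_n}$ by definition of the inner product. I would extract the Lagrange multipliers: the Palais--Smale condition on the constrained functional gives $\omega_n \in \real$ with $I_\alpha' \parens{u_n} + \omega_n \Re \angles{u_n \mid \cdot}_{L^2} \to 0$ in $\mathcal{L} \parens{W^{1, 2}_\alpha, \real}$, and one checks $\parens{\omega_n}$ is bounded (passing to a subsequence $\omega_n \to \omega$). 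Testing the difference of these relations at $u_n$ and $u_m$ against $v_{n, m} := u_n - u_m$ and isolating the quadratic form, the $L^2$ and multiplier contributions vanish because $\norm{v_n}_{L^2} \to 0$, the $T_\alpha'$ contribution is handled by \eqref{eqn:dichotomy:T_3}, and what survives forces
\[
H_\alpha \parens{u_n - u_m}
\xrightarrow[n, m \to \infty]{}
0,
\]
which is the limit \eqref{eqn:H_alpha-convergence} flagged in Remark \ref{intro:dich:rmk}. Here the hypothesis $\alpha \in \coi{0, \infty}$ is essential: it guarantees (via Lemma \ref{lem:H_alpha}) that $H_\alpha \geq 0$, so that $\parens{u_n}$ is Cauchy for $H_\alpha$ and hence for $\norm{\cdot}_{W^{1, 2}_\alpha}$. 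Strong convergence $u_n \to u_\infty$ in $W^{1, 2}_\alpha$ then follows, and continuity of $I_\alpha$ gives $I_\alpha \parens{u_\infty} = \I_\alpha \parens{r^2}$.

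The main obstacle I anticipate is the second stage, specifically extracting and bounding the Lagrange multipliers and correctly matching the test-function computation so that only the $H_\alpha$ term remains: the subtlety is that $H_\alpha$ is a genuine quadratic form on $W^{1, 2}_\alpha$ (not merely a norm), and one must use the polarization-type identity together with \eqref{eqn:dichotomy:T_3} and the sign condition on $\alpha$ to conclude that the Palais--Smale sequence is Cauchy in the energy norm rather than merely weakly convergent.
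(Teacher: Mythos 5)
Your proposal is correct and follows essentially the same route as the paper: stage one combines the $H_\alpha$-splitting of Lemma \ref{lem:properties-of-H_alpha} with \eqref{eqn:dichotomy:T_1}, the rescaling $\delta_n \parens{u_n - u_\infty}$ and \eqref{eqn:dichotomy:T_2} to contradict \eqref{eqn:SSC}, and stage two extracts bounded Lagrange multipliers from the Palais--Smale condition, tests against $u_n - u_m$, and uses \eqref{eqn:dichotomy:T_3} together with the nonnegativity of $H_\alpha$ (where $\alpha \geq 0$ enters) to obtain \eqref{eqn:H_alpha-convergence} and hence strong convergence. The only cosmetic difference is that you organize stage one as $\I_\alpha \parens{r^2} \geq I_\alpha \parens{u_\infty} + \liminf_n I_\alpha \parens{u_n - u_\infty}$ rather than bounding $\I_\alpha \parens{\mu^2} + \I_\alpha \parens{r^2 - \mu^2}$ from above as in \eqref{dich:aux:1}, which is the same estimate read in the opposite direction.
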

\begin{proof}
\emph{0. A preliminary.}
We know that $W^{1, 2}_\alpha \hookrightarrow L^2$ and we suppose that
$u_n \rightharpoonup u_\infty$ in $W^{1, 2}_\alpha$ as $n \to \infty$, so
\begin{equation}
\label{dich:aux:0}
u_n \xrightharpoonup[n \to \infty]{L^2} u_\infty.
\end{equation}

\noindent \emph{1. Proof that $\mu = r$.}
The norm $\norm{\cdot}_{L^2}$ is weakly lower semi-continuous and we suppose that
$\mu > 0$, so it follows from \eqref{dich:aux:0} that $0 < \mu \leq r$. By contradiction, suppose that $\mu < r$.
\\ \\
\noindent \emph{1.1. A preliminary result.}
Let us prove that
\begin{equation}
\label{dich:aux:-1}
\I_\alpha \parens{r^2 - \mu^2}
+
\I_\alpha \parens{\mu^2}
\leq
\I_\alpha \parens{r^2} + \eps
\quad \text{for every} \quad
\eps \in \ooi{0, \infty}.
\end{equation}
Consider a fixed $\eps \in \ooi{0, \infty}$. Due to Lemma \ref{lem:properties-of-H_alpha},
\begin{multline}
\label{dich:aux:1}
\I_\alpha \parens{r^2 - \mu^2}
+
\I_\alpha \parens{\mu^2}
\leq
I_\alpha \parens*{\delta_n \parens{u_n - u_\infty}}
+
I_\alpha \parens{u_\infty}
=
\\
=
\frac{1}{2}
\parens*{
	\delta_n^2 H_\alpha \parens{u_n - u_\infty}
	+
	H_\alpha \parens{u_\infty}
	-
	H_\alpha \parens{u_n}
}
+
\\
+
\parens*{
	T_\alpha \parens*{\delta_n \parens{u_n - u_\infty}}
	+
	T_\alpha \parens{u_\infty}
	-
	T_\alpha \parens{u_n}
}
+
I_\alpha \parens{u_n}
~ \text{for every} ~ n \in \nat.
\end{multline}
By hypothesis,
\begin{equation}
\label{dich:aux:1.5}
I_\alpha \parens{u_n} \xrightarrow[n \to \infty]{} \I_\alpha \parens{r^2}.
\end{equation}
Due to \eqref{eqn:dichotomy:T_1} and \eqref{eqn:dichotomy:T_2},
\begin{equation}
\label{dich:aux:2}
T_\alpha \parens*{\delta_n \parens{u_n - u_\infty}}
+
T_\alpha \parens{u_\infty}
-
T_\alpha \parens{u_n}
\leq
\eps
~ \text{if} ~ n \in \nat ~ \text{is sufficiently large}.
\end{equation}
As $u_n \rightharpoonup u_\infty$ in $W^{1, 2}_\alpha$ as $n \to \infty$, it follows that
\begin{equation}
\label{dich:aux:3}
\set{u_n}_{n \in \nat} ~ \text{is a bounded subset of} ~ W^{1, 2}_\alpha.
\end{equation}
It is clear that \eqref{dich:aux:0} implies $\delta_n \to 1$ as $n \to \infty$. In view of \eqref{dich:aux:3} and this limit, we deduce that
\begin{equation}
\label{dich:aux:5}
\parens{\delta_n^2 - 1} H_\alpha \parens{u_n - u_\infty} \xrightarrow[n \to \infty]{} 0.
\end{equation}
Due to \eqref{dich:aux:5} and Lemma \ref{lem:properties-of-H_alpha},
\begin{equation}
\label{dich:aux:6}
\delta_n^2 H_\alpha \parens{u_n - u_\infty}
+
H_\alpha \parens{u_\infty}
-
H_\alpha \parens{u_n}
\xrightarrow[n \to \infty]{}
0.
\end{equation}
Finally, \eqref{dich:aux:-1} follows from \eqref{dich:aux:1}--\eqref{dich:aux:2} and \eqref{dich:aux:6}.
\\ \\
\noindent \emph{1.2. Conclusion.}
The facts \eqref{dich:aux:-1} and \eqref{eqn:SSC} are contradictory, so $\mu = r$.
\\ \\
\noindent \emph{2. Proof of the second conclusion.}
Due to \eqref{dich:aux:0} and the fact that $\mu = r$, it follows that
\begin{equation}
\label{dich:aux:7}
\norm{u_n - u_\infty}_{L^2} \xrightarrow[n \to \infty]{} 0.
\end{equation}
\\ \\
\noindent \emph{2.1. A preliminary result.}
Let us prove that, up to subsequence,
\begin{equation}
\label{dich:aux:-2}
H_\alpha \parens{u_n - u_m} \xrightarrow[n, m \to \infty]{} 0.
\end{equation}
Given $n \in \nat$, let $f_n \in \mathcal{L} \parens{W^{1, 2}_\alpha, \real}$ be defined as
\[
f_n \brackets{v}
=
\Re \brackets*{\int \overline{u_n \parens{x}} v \parens{x} \dif x}.
\]
The set $F_n := \set{t f_n : t \in \real}$ is a finite-dimensional subspace of
$\mathcal{L} \parens{W^{1, 2}_\alpha, \real}$, so it is complemented and there exists a canonical projection
\[\mathrm{proj}_{F_n} \colon \mathcal{L} \parens{W^{1, 2}_\alpha, \real} \to F_n.\]
As such, we let $\nu_n \in \real$ be such that
$\mathrm{proj}_{F_n} I_\alpha' \parens{u_n} = \nu_n f_n$.
Due to the fact that $\parens{u_n}_{n \in \nat}$ is a Palais--Smale sequence of $I_\alpha|_{W^{1, 2}_\alpha \parens{r^2}}$,
\begin{equation}
\label{eqn:dich:aux:7}
\norm{
	I_\alpha' \parens{u_n} - \nu_n f_n
}_{\mathcal{L} \parens{W^{1, 2}_\alpha, \real}}
\xrightarrow[n \to \infty]{}
0.
\end{equation}
As $\parens{u_n}_{n \in \nat}$ is bounded in $W^{1, 2}_\alpha$, we obtain
\begin{equation}
\label{eqn:dich:aux:5}
\parens*{I' \parens{u_n} - \nu_n f_n} \brackets{u_n}
\xrightarrow[n \to \infty]{}
0.
\end{equation}
The set $\set{\nu_n}_{n \in \nat}$ is bounded, so $\parens{\nu_n}_{n \in \nat}$ is convergent up to subsequence. We obtain
\begin{equation}
\label{eqn:dich:aux:4}
\parens{\nu_n - \nu_m} f_m \brackets{u_n - u_m}
\xrightarrow[n, m \to \infty]{}
0.
\end{equation}
Clearly,
\begin{multline*}
\parens*{I_\alpha' \parens{u_n} - \nu_n f_n} \brackets{u_n - u_m}
-
\parens*{I_\alpha' \parens{u_m} - \nu_m f_m} \brackets{u_n - u_m}
=
\\
=
\parens*{I_\alpha' \parens{u_n} - I_\alpha' \parens{u_m}} \brackets{u_n - u_m}
-
\parens*{\nu_n f_n - \nu_m f_m} \brackets{u_n - u_m}
=
\\
=
\frac{1}{2} H_\alpha \parens{u_n - u_m}
+
\parens*{T' \parens{u_n} - T' \parens{u_m}} \brackets{u_n - u_m}
+
\\
+
\parens{\nu_m - \nu_n} f_n \brackets{u_n - u_m}
+
\nu_m \parens{f_m - f_n} \brackets{u_n - u_m}.
\end{multline*}
The $\liminf$ is super-additive, so \eqref{eqn:dich:aux:7} shows that
\begin{multline*}
\frac{1}{2}
\liminf_{n, m \to \infty} \parens*{H_\alpha \parens{u_n - u_m}}
+
\liminf_{n, m \to \infty} \parens*{
	\parens*{T' \parens{u_n} - T' \parens{u_m}} \brackets{u_n - u_m}
}
+
\\
+
\liminf_{n, m \to \infty} \parens*{
	\parens{\nu_m - \nu_n} f_n \brackets{u_n - u_m}
}
+
\liminf_{n, m \to \infty} \parens*{
	\nu_m \parens{f_m - f_n} \brackets{u_n - u_m}
}
\leq
\\
\leq
\liminf_{n, m \to \infty} \left(
	\frac{1}{2} H_\alpha \parens{u_n - u_m}
	+
	\parens*{T' \parens{u_n} - T' \parens{u_m}} \brackets{u_n - u_m}
	+
\right.
	\\
\left.
	+
	\parens{\nu_m - \nu_n} f_n \brackets{u_n - u_m}
	+
	\nu_m \parens{f_m - f_n} \brackets{u_n - u_m}
\right)
=
0.
\end{multline*}
In view of \eqref{eqn:dichotomy:T_3} and \eqref{eqn:dich:aux:4},
$\liminf_{n, m \to \infty} \parens*{H_\alpha \parens{u_n - u_m}} \leq 0$.
The nonlinear functional $H_\alpha$ is non-negative. We conclude that, up to subsequence, \eqref{dich:aux:-2} is satisfied.
\\ \\
\noindent \emph{2.2. Conclusion.}
In view of \eqref{dich:aux:7} and \eqref{dich:aux:-2}, we deduce that
$u_n \to u_\infty$ in $W^{1, 2}_\alpha$ as $n \to \infty$. Finally, the equality $I_\alpha \parens{u_\infty} = \I_\alpha \parens{r^2}$ follows from continuity.
\end{proof}

Due to the similarity between Lemma \ref{dich:lem:2} and \cite[Lemma 2.1]{bellazziniStableStandingWaves2011}, it is worth doing a systematic comparison between both results.
\begin{rmk}
\label{dich:rmk}
\begin{enumerate}
\item
The method of proof is the same.
\item
Lemma \ref{dich:lem:2} is not implied by \cite[Lemma 2.1]{bellazziniStableStandingWaves2011}. Indeed, the energy space
$W^{1, 2}_\alpha$ does not respect the hypotheses in Bellazzini \& Siciliano's result. Moreover, \eqref{eqn:dichotomy:T_1} and \eqref{eqn:dichotomy:T_3} are respectively weaker than their counterparts \cite[(2.2) and (2.5)]{bellazziniStableStandingWaves2011}. It is worth considering weaker conditions because the corresponding limits are not satisfied in the case of the Kirchhoff-type equation in Section \ref{intro:Kirchhoff} (see Lemma \ref{Kirchhoff:dich:lem:cond:4}).
\item
We only consider the abstract minimization problem stated in Section \ref{intro:AMP} under the hypothesis that $T_\alpha'$ is bounded, which takes away the need for a condition similar to \cite[(2.6)]{bellazziniStableStandingWaves2011}.
\end{enumerate}
\end{rmk}

We need the following preliminary lemma to prove Theorem \ref{dich:thm}.

\begin{lem}
\label{dich:lem:3}
Consider the conditions in Theorem \ref{dich:thm}. Suppose that Condition \ref{dich:cond:1} is satisfied and $\rho \in \ooi{0, \infty}$ is such that Conditions \ref{dich:cond:2} and \ref{dich:cond:3} are satisfied. Let
\begin{equation}
\label{dich:eqn:r}
r
=
\inf
\set*{
	t \in \oci{0, \rho}:
	\frac{\I_\alpha \parens{t^2}}{t^2} = \frac{\I_\alpha \parens{\rho^2}}{\rho^2}
}
\in
\cci{0, \rho}.
\end{equation}
Then $r > 0$ and \eqref{eqn:SSC} holds.
\end{lem}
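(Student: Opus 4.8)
The plan is to reduce everything to the behaviour of the single scalar function $\varphi\parens{t} := \I_\alpha\parens{t^2} / t^2$ on $\oci{0, \rho}$. By Conditions \ref{dich:cond:2} and \ref{dich:cond:3} this function is continuous and strictly negative on $\oci{0, \rho}$, and Condition \ref{dich:cond:1} says precisely that $\varphi\parens{t} \to 0$ as $t \to 0^+$, so setting $\varphi\parens{0} := 0$ extends $\varphi$ to a continuous function on the compact interval $\cci{0, \rho}$. In this notation the defining set of $r$ in \eqref{dich:eqn:r} is exactly $\set*{t \in \oci{0, \rho} : \varphi\parens{t} = \varphi\parens{\rho}}$.

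First I would show that $r > 0$ and that the infimum is attained, i.e. $\varphi\parens{r} = \varphi\parens{\rho}$. The set $S := \set*{t \in \cci{0, \rho} : \varphi\parens{t} = \varphi\parens{\rho}}$ is the preimage of a point under the continuous map $\varphi$, hence closed in $\cci{0, \rho}$ and therefore compact; it is nonempty since $\rho \in S$. Because $\varphi\parens{0} = 0$ while $\varphi\parens{\rho} < 0$, the point $0$ does not belong to $S$, so $S$ coincides with the defining set of $r$. Consequently $r = \min S \in \oci{0, \rho}$, which gives both $r > 0$ and $\varphi\parens{r} = \varphi\parens{\rho}$.

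The key step, which I expect to be the main obstacle, is to prove that $\varphi\parens{t} > \varphi\parens{\rho}$ for every $t \in \ooi{0, r}$. By the very definition of $r$ as the infimum of $S$, no $t \in \ooi{0, r}$ satisfies $\varphi\parens{t} = \varphi\parens{\rho}$. Since $\ooi{0, r}$ is connected and $\varphi$ is continuous, the image of $\ooi{0, r}$ under $\varphi$ is an interval that omits the value $\varphi\parens{\rho}$, hence lies either entirely above or entirely below it. Because $\varphi\parens{t} \to \varphi\parens{0} = 0 > \varphi\parens{\rho}$ as $t \to 0^+$, the image contains points above $\varphi\parens{\rho}$, so it lies entirely above; this forces $\varphi\parens{t} > \varphi\parens{\rho}$ throughout $\ooi{0, r}$. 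This is the only place where the hypotheses genuinely interact: it uses simultaneously the minimality of $r$ and the boundary behaviour $\varphi\parens{0^+} = 0$ coming from Condition \ref{dich:cond:1}.

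Finally I would deduce \eqref{eqn:SSC}. Fix $\mu \in \ooi{0, r}$; then both $\mu$ and $\sqrt{r^2 - \mu^2}$ lie in $\ooi{0, r}$, so the previous step yields $\varphi\parens{\mu} > \varphi\parens{\rho}$ and $\varphi\parens*{\sqrt{r^2 - \mu^2}} > \varphi\parens{\rho}$. Multiplying these by $\mu^2 > 0$ and $r^2 - \mu^2 > 0$ respectively (which preserves the strict inequalities) and adding gives
\[
\I_\alpha\parens{\mu^2} + \I_\alpha\parens{r^2 - \mu^2}
>
\mu^2 \varphi\parens{\rho} + \parens{r^2 - \mu^2} \varphi\parens{\rho}
=
r^2 \varphi\parens{\rho}
=
\I_\alpha\parens{r^2},
\]
where the last equality uses $\varphi\parens{r} = \varphi\parens{\rho}$ from the second step. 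As $\mu \in \ooi{0, r}$ was arbitrary, this is exactly \eqref{eqn:SSC}, completing the proof.
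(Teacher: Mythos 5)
Your proof is correct and follows essentially the same route as the paper: both arguments use Condition \ref{dich:cond:1} together with $\I_\alpha\parens{\rho^2} < 0$ to force $r > 0$, then exploit continuity (your connected-image argument is the paper's intermediate value theorem step in different packaging) to conclude that $\I_\alpha\parens{t^2}/t^2$ stays strictly above $\I_\alpha\parens{r^2}/r^2$ on $\ooi{0, r}$, and finally sum the two scaled strict inequalities to obtain \eqref{eqn:SSC}. Your explicit verification that the infimum defining $r$ is attained, so that $\I_\alpha\parens{r^2}/r^2 = \I_\alpha\parens{\rho^2}/\rho^2$, makes a point the paper leaves implicit, which is a small improvement in exposition rather than a different method.
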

\begin{proof}
\noindent \emph{1. Proof that $r > 0$.}
By contradiction, suppose that $r = 0$. On one hand, it follows from Condition \ref{dich:cond:2} that $\I_\alpha \parens{\rho^2} < 0$. On the other hand, it follows from Condition \ref{dich:cond:1} that
$\frac{\I_\alpha \parens{t^2}}{t^2} \to 0$ as $t \to 0^+$. We just obtained a contradiction.
\\ \\
\noindent \emph{2. A preliminary fact.}
Consider the function
\[
\oci{0, r} \ni t \mapsto f \parens{t}
:=
\frac{\I_\alpha \parens{t^2}}{t^2} \in \ooi{- \infty, 0}.
\]
We want to prove that
\begin{equation}
\label{dich:aux:11}
\set*{t \in \oci{0, r} : f \parens{t} = \min f}
=
\set{r}.
\end{equation}
\noindent \emph{2.1. Proof that $f \parens{r} = \min f$.}
By contradiction, suppose that there exists $r_* \in \ooi{0, r}$ such that
$f \parens{r_*} < f \parens{r} < 0$.
Due to Conditions \ref{dich:cond:1} and \ref{dich:cond:3}, there exists
$r_{**} \in \ooi{0, r_*}$ such that $f \parens{r_{**}} = f \parens{r}$, which contradicts the definition of $r$.
\\ \\
\noindent \emph{2.2. Proof that if $f \parens{t} = \min f$, then $t = r$.}
If we suppose that $0 < t <r$ and $f \parens{t} = \min f$, then we obtain a contradiction with the definition of $r$.
\\ \\
\noindent \emph{3. Proof that \eqref{eqn:SSC} is satisfied.}
In view of \eqref{dich:aux:11},
\[
\frac{\mu^2}{r^2} \I_\alpha \parens{r^2}
<
\I_\alpha \parens{\mu^2}
\quad \text{and} \quad
\frac{r^2 - \mu^2}{r^2} \I_\alpha \parens{r^2}
<
\I_\alpha \parens{r^2 - \mu^2}.
\]
It suffices to sum these inequalities to obtain the strict sub-additivity condition.
\end{proof}

We proceed to the proof of the theorem.

\begin{proof}[Proof of Theorem \ref{dich:thm}]
\emph{1. Definition of $r$.}
Due to Lemma \ref{dich:lem:3},
\[
0
<
r
:=
\inf
\set*{
	t \in \oci{0, \rho}:
	\frac{\I_\alpha \parens{t^2}}{t^2} = \frac{\I_\alpha \parens{\rho^2}}{\rho^2}
}
\leq
\rho.
\]

\noindent\emph{2. Proof that $r = \rho$.}
By contradiction, suppose that $0 < r < \rho$.
In view of Condition \ref{dich:cond:4}, we can use Lemma \ref{dich:lem} to deduce that there exists $u_\infty \in W^{1, 2}_\alpha$ which solves \eqref{eqn:AMP:3}. In particular, $u_\infty \in M \parens{\rho^2} \neq \emptyset$.

Let us show that if $g_{u_\infty}$ is an ASP of $u_\infty$, then $h'_{\alpha, g_{u_\infty}} \parens{1^-} = 0$. Suppose without loss of generality that
$\Theta_{g_{u_\infty}}' \parens{1} > 0$ (the argument is analogous if
$\Theta_{g_{u_\infty}}' \parens{1} < 0$). It follows from the definition of $r$ that there exists $\eps > 0$ such that
\[
\frac{\I_\alpha \parens{r^2}}{r^2}
\leq
\frac{
	\I_\alpha \parens*{\Theta_{g_{u_\infty}} \parens{\theta} r^2}
}{
	\Theta_{g_{u_\infty}} \parens{\theta} r^2
},
\quad \text{and so} \quad
\frac{I_\alpha \parens{u_\infty}}{r^2}
\leq
\frac{
	I_\alpha \parens*{g_{u_\infty} \parens{\theta}}
}{
	\Theta_{g_{u_\infty}} \parens{\theta} r^2
}
\]
for every $\theta \in \oci{1 - \eps, 1}$, so
$h'_{\alpha, g_{u_\infty}} \parens{1^-} = 0$.

The result in the previous paragraph contradicts Condition \ref{dich:cond:5}, so
$r = \rho$.
\\ \\
\noindent\emph{3. Proof that \eqref{eqn:AMP} has a solution.}
We proved that $r = \rho$, so Condition \ref{dich:cond:4} ensures that we can use Lemma \ref{dich:lem} to conclude.
\end{proof}

\section{Proofs of results in Section \ref{intro:applications}}
\label{applications}
\subsection{Ground states of \eqref{eqn:delta-Kirchhoff}}
\label{Kirchhoff}

\subsubsection{Avoiding vanishing}
\label{Kirchhoff:vanishing}

\begin{lem}
\label{Kirchhoff:lem:vanishing}
Given $\rho \in \ooi{0, \infty}$,
\begin{enumerate}
\item
$\I_\alpha \parens{\rho^2} < \I \parens{\rho^2} \leq 0$ (which implies \eqref{cond:vanishing:1}) and
\item
$I_\alpha|_{W^{1, 2}_\alpha \parens{\rho^2}}$ is coercive (that is, \eqref{cond:vanishing:2} is satisfied).
\end{enumerate}
\end{lem}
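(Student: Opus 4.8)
The plan is to treat the two claims separately, beginning with the coercivity of $I_\alpha|_{W^{1, 2}_\alpha \parens{\rho^2}}$, which is the more routine part. On the constraint sphere one has $\norm{u}_{W^{1, 2}_\alpha}^2 = \rho^2 + H_\alpha \parens{u}$, so coercivity amounts to showing $I_\alpha \parens{u} \to \infty$ as $H_\alpha \parens{u} \to \infty$. The only term that could obstruct this is $- \frac{1}{p} \norm{u}_{L^p}^p$, so the point is to dominate it by a sublinear power of $H_\alpha \parens{u}$. I would write each $u \in W^{1, 2}_\alpha \parens{\rho^2} \setminus W^{1, 2}$ in the representation $u = \phi + q G_{\eps \abs{q}^4 / \norm{u}_{L^2}^4}$ of Lemma \ref{lem:H_alpha} (the case $u \in W^{1, 2}$, where $q = 0$, being covered by the same estimate) and apply the second Gagliardo--Nirenberg inequality of Proposition \ref{prelim:prop:GN}. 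With $\norm{u}_{L^2} = \rho$ fixed, it bounds $\norm{u}_{L^p}^p$ by a constant times $\norm{\phi}_{\dot{W}^{1, 2}}^{3 \parens{p - 2} / 2} + \abs{q}^{3 \parens{p - 2}}$. Since Lemma \ref{lem:H_alpha} gives $\norm{\phi}_{\dot{W}^{1, 2}}^2 \leq H_\alpha \parens{u}$ and $\abs{q}^4 \leq \parens{8 \pi \rho}^2 H_\alpha \parens{u}$, both contributions are $O \parens{H_\alpha \parens{u}^{3 \parens{p - 2} / 4}}$; as $2 < p < \frac{5}{2}$ forces $\frac{3 \parens{p - 2}}{4} < \frac{3}{8} < 1$, we obtain $\norm{u}_{L^p}^p = o \parens{H_\alpha \parens{u}}$ and hence $I_\alpha \parens{u} \geq \frac{1}{4} H_\alpha \parens{u}^2 + o \parens{H_\alpha \parens{u}} \to \infty$, which gives \eqref{cond:vanishing:2}.

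For the first claim I would split $\I_\alpha \parens{\rho^2} < \I \parens{\rho^2} \leq 0$ into the upper bound $\I \parens{\rho^2} \leq 0$ and the strict inequality. The bound follows from a scaling argument: fixing any $\phi \in W^{1, 2} \parens{\rho^2}$, the dilations $\phi_t \parens{x} := t^{3/2} \phi \parens{t x}$ remain in $W^{1, 2} \parens{\rho^2}$ and satisfy $I \parens{\phi_t} = \frac{1}{2} t^2 \norm{\phi}_{\dot{W}^{1, 2}}^2 + \frac{1}{4} t^4 \norm{\phi}_{\dot{W}^{1, 2}}^4 - \frac{1}{p} t^{3 \parens{p - 2} / 2} \norm{\phi}_{L^p}^p$; because $\frac{3 \parens{p - 2}}{2} < 2 < 4$, the negative term dominates as $t \to 0^+$, so $I \parens{\phi_t} < 0$ for small $t$ and therefore $\I \parens{\rho^2} < 0$.

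The core of the lemma is the strict inequality $\I_\alpha \parens{\rho^2} < \I \parens{\rho^2}$, which yields \eqref{cond:vanishing:1}. I would first invoke \cite[Theorem 1.1]{yeSharpExistenceConstrained2015} to produce a minimizer $v_0$ of the auxiliary problem \eqref{eqn:AMP:2}; elliptic regularity gives $v_0 \in W^{2, 2} \hookrightarrow C^{0, 1/2}$, and since $I$ is translation invariant on $W^{1, 2}$ I may translate it so that the resulting minimizer $v$ satisfies $v \parens{0} \neq 0$. Because $v \in W^{1, 2}_\alpha \parens{\rho^2}$ with $I_\alpha \parens{v} = I \parens{v} = \I \parens{\rho^2}$, we already have $\I_\alpha \parens{\rho^2} \leq \I \parens{\rho^2}$, and I argue by contradiction: were this an equality, $v$ would minimize $I_\alpha$ on $W^{1, 2}_\alpha \parens{\rho^2}$, so by the Lagrange multiplier theorem there would be $\omega \in \real$ with $\parens{1 + H_\alpha \parens{v}} \Re S_\alpha \parens{v, u} - \Re \int \abs{v}^{p - 2} \overline{v} u + \omega \Re \int \overline{v} u = 0$ for every $u \in W^{1, 2}_\alpha$. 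Testing against $W^{1, 2}$ yields $- \parens{1 + H_\alpha \parens{v}} \Delta v + \omega v = \abs{v}^{p - 2} v$ in $L^2$; substituting this into the complex identity obtained by testing against $u = G_\lambda$ and $u = \iu G_\lambda$, using $S_\alpha \parens{v, G_\lambda} = - \lambda \angles{v \mid G_\lambda}_{L^2}$ and Lemma \ref{lem:symmetric}, everything collapses to $- \parens{1 + H_\alpha \parens{v}} v \parens{0} = 0$, forcing $v \parens{0} = 0$ against our choice.

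I expect this last step to be the main obstacle. Unlike the pure-power case, Lemma \ref{lem:calI_alpha<calI} does not apply directly here: the derivative of the Kirchhoff term $\frac{1}{4} H_\alpha \parens{\cdot}^2$ at $v$ equals $H_\alpha \parens{v} \Re S_\alpha \parens{v, \cdot}$, which depends on the charge $q_u$ through a term proportional to $q_u v \parens{0}$, and since $u \mapsto q_u$ is not $L^2$-continuous this cannot be written as $\Re \int \overline{z_v} u$ with $z_v \in L^2$ when $v \parens{0} \neq 0$; thus \eqref{eqn:form-of-T_alpha'} fails. Consequently the Lagrange-multiplier contradiction must be run by hand as above, and the delicate points are securing the regularity $v \in W^{2, 2}$ (so that $v \parens{0}$ is defined and Lemma \ref{lem:symmetric} applies) and the bookkeeping of the $G_\lambda$-test, rather than any conceptual difficulty.
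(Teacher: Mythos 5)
Your proposal is correct and follows the same overall strategy as the paper: produce a minimizer $v$ of the auxiliary problem \eqref{eqn:AMP:2} from \cite[Theorem 1.1]{yeSharpExistenceConstrained2015}, upgrade it to $W^{2,2}$ by elliptic regularity, translate so that $v\parens{0}\neq 0$, and derive a contradiction from the Euler--Lagrange identity tested against $G_\lambda$; for coercivity, control $C\parens{u}$ by a sublinear power of $H_\alpha\parens{u}$ via Proposition \ref{prelim:prop:GN} and Lemma \ref{lem:H_alpha}. Two of your choices differ from the paper only in packaging: the paper obtains coercivity by noting $I_\alpha\geq J_\alpha$ and citing Lemma \ref{delta-NLSE:van:lem:cond:2}, whereas you rerun the same Gagliardo--Nirenberg estimate directly (your exponent bookkeeping $3\parens{p-2}/4<1$ is right, and in fact only needs $p<10/3$, so the $p<\frac{5}{2}$ hypothesis is not what makes coercivity work); and the paper takes $\I\parens{\rho^2}\leq 0$ from Ye's theorem while you reprove the strict negativity by scaling. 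Both are fine.

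The substantive difference is your treatment of the strict inequality $\I_\alpha\parens{\rho^2}<\I\parens{\rho^2}$, and here your caution is well placed: the paper concludes by ``an application of Lemma \ref{lem:calI_alpha<calI}'', but, exactly as you observe, the hypothesis \eqref{eqn:form-of-T_alpha'} of that lemma fails for the Kirchhoff nonlinearity at the relevant $v$. Indeed, for $v\in W^{2,2}$ one has $S_\alpha\parens{v,u}=\angles{-\Delta v\mid u}_{L^2}-q_u\,\overline{v\parens{0}}$, so $\frac{1}{4}B_\alpha'\parens{v}\brackets{u}=H_\alpha\parens{v}\Re S_\alpha\parens{v,u}$ contains a term proportional to $q_u v\parens{0}$, and since $u\mapsto q_u$ is not $L^2$-continuous this cannot be represented as $\Re\int\overline{z_v}u$ with $z_v\in L^2$ when $v\parens{0}\neq 0$ and $H_\alpha\parens{v}>0$. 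Your by-hand Lagrange-multiplier computation is the correct repair: the extra point-evaluation term does not spoil the argument but merely changes the final identity from $v\parens{0}=0$ to $\parens{1+H_\alpha\parens{v}}v\parens{0}=0$, which yields the same contradiction. So your proof is not only correct but closes a gap in the paper's own citation of Lemma \ref{lem:calI_alpha<calI}; the cleanest fix on the paper's side would be to weaken \eqref{eqn:form-of-T_alpha'} to allow a representative of the form $z_v+c\,q_{\parens{\cdot}}$ with $c\in\complex$, under which the proof of Lemma \ref{lem:calI_alpha<calI} goes through verbatim.
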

\begin{proof}
\emph{1. Proof of the first conclusion.}
It follows from \cite[Theorem 1.1]{yeSharpExistenceConstrained2015} that
$\I \parens{\rho^2} \leq 0$ and there exists
$v \in W^{1, 2} \parens{\rho^2}$ that solves \eqref{eqn:AMP:2}. That is, there exists
$\omega \in \real$ such that
\[
-\parens*{1 + \int \abs*{\nabla v \parens{x}}^2 \dif x} \Delta v
+
\omega v
=
v \abs{v}^{p - 2}
\]
in the weak sense. As $p \in \ooi{2, \frac{5}{2}}$, we deduce that $- \Delta v \in L^2$. In particular, $v \in W^{2, 2}$. As $I \parens{v} = I \parens{v \parens{\cdot + x}}$ for every
$x \in \real^3$ and $v \not \equiv 0$, we can suppose that $v \parens{0} \neq 0$. Finally, the result follows from an application of Lemma \ref{lem:calI_alpha<calI}.
\\ \\
\noindent \emph{2. Proof that \eqref{cond:vanishing:2} is satisfied.}
Corollary of Lemma \ref{delta-NLSE:van:lem:cond:2} because
$I_\alpha \parens{u} \geq J_\alpha \parens{u}$ for every $u \in W^{1, 2}_\alpha$.
\end{proof}

\subsubsection{Avoiding dichotomy}
\label{Kirchhoff:dichotomy}

\begin{lem}
\label{Kirchhoff:dich:lem:cond:1}
Condition \ref{dich:cond:1} is satisfied.
\end{lem}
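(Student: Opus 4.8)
The plan is to sandwich the ratio $\I_\alpha\parens{r^2}/r^2$ between two quantities that both vanish as $r \to 0^+$. The upper bound is free: Lemma \ref{Kirchhoff:lem:vanishing} gives $\I_\alpha\parens{r^2} \le \I\parens{r^2} \le 0$, so $\I_\alpha\parens{r^2}/r^2 \le 0$ for every $r \in \ooi{0, \infty}$. The entire content of the lemma is therefore the matching lower bound $\I_\alpha\parens{r^2} \ge - C r^\gamma$ with an exponent $\gamma > 2$, which I would obtain from a pointwise estimate valid for every competitor $u \in W^{1, 2}_\alpha \parens{r^2}$.

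First I would record that, by definition of the Kirchhoff energy, $I_\alpha \parens{u} = \frac{1}{2} H_\alpha \parens{u} + \frac{1}{4} H_\alpha \parens{u}^2 - \frac{1}{p} \norm{u}_{L^p}^p$, so only the last term can be negative. The key estimate to establish is
\[
\norm{u}_{L^p}^p \le C\, H_\alpha \parens{u}^\beta\, r^{\frac{6 - p}{2}}, \qquad \beta := \frac{3 \parens{p - 2}}{4},
\]
uniformly over $u \in W^{1, 2}_\alpha \parens{r^2}$. For $u \in W^{1, 2}$ this is the standard Gagliardo--Nirenberg inequality (the $q = 0$ case of Proposition \ref{prelim:prop:GN}), since there $H_\alpha \parens{u} = \norm{u}_{\dot{W}^{1, 2}}^2$. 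For $u \in W^{1, 2}_\alpha \setminus W^{1, 2}$ I would write $u = \phi + q G_{\eps \abs{q}^4 / r^4}$ in the distinguished representation of Lemma \ref{lem:H_alpha} and feed the second inequality of Proposition \ref{prelim:prop:GN} the bounds $\norm{\phi}_{\dot{W}^{1, 2}}^2 \le H_\alpha \parens{u}$ and $\abs{q}^2 \le 8 \pi r\, H_\alpha \parens{u}^{1/2}$ that the formula for $H_\alpha$ supplies. The pleasant point is that both summands of that inequality collapse to the same monomial: the powers of $r$ in the two terms are $\frac{6 - p}{2}$ and $\frac{3 \parens{p - 2}}{2} + 2 \parens{3 - p} = \frac{6 - p}{2}$, while the powers of $H_\alpha \parens{u}$ are both $\beta$, precisely because of the scaling $\lambda = \eps \abs{q}^4 / r^4$.

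With the display in hand I would set $t := H_\alpha \parens{u} \ge 0$ and $c := \frac{1}{p} C r^{\frac{6 - p}{2}}$ and absorb the bad term into the Kirchhoff term by Young's inequality: since $\beta < 2$, one has $c\, t^\beta \le \frac{1}{4} t^2 + C_\beta\, c^{2 / \parens{2 - \beta}}$. Dropping the non-negative remainder $\frac{1}{2} t$, this yields $I_\alpha \parens{u} \ge - C_\beta\, c^{2 / \parens{2 - \beta}} = - C' r^\gamma$ with $\gamma = \frac{6 - p}{2 - \beta} = \frac{4 \parens{6 - p}}{14 - 3 p}$. Taking the infimum over $u$ gives $\I_\alpha \parens{r^2} \ge - C' r^\gamma$. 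A one-line computation shows $\gamma > 2 \iff p > 2$, which holds throughout the admissible range $2 < p < \frac{5}{2}$ (and keeps $14 - 3 p > 0$), so $\I_\alpha \parens{r^2} / r^2 \ge - C' r^{\gamma - 2} \to 0$. Combined with the upper bound, this forces $\I_\alpha \parens{r^2} / r^2 \to 0$, which is Condition \ref{dich:cond:1}.

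The main obstacle is bookkeeping rather than depth: I must treat the regular and singular competitors under a single estimate and make the exponent arithmetic land on $\gamma > 2$. I expect the only delicate points to be the verification that the two terms of the Gagliardo--Nirenberg bound share the power $\frac{6 - p}{2}$ of $r$ (this is what makes the final bound depend on $r$ through a single power) and the applicability of Young's inequality, i.e.\ $\beta < 2$, which is where the restriction $2 < p < \frac{5}{2}$ is comfortably — indeed more than — satisfied.
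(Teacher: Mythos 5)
Your proof is correct, and it takes a genuinely different route from the paper's. The paper's proof is a two-line reduction: since $B_\alpha \geq 0$, the Kirchhoff functional dominates the $\delta$-NLSE functional $J_\alpha$ of the appendix, so $\J_\alpha \parens{\rho^2} \leq \I_\alpha \parens{\rho^2} < 0$, and the conclusion follows by squeezing from Lemma \ref{delta-NLSE:dich:lem:cond:1}, which asserts $\J_\alpha \parens{\rho^2} / \rho^2 \to 0$. That appendix lemma is itself nontrivial: it invokes the existence of $\delta$-NLSE ground states from \cite{adamiExistenceStructureRobustness2022} and a contradiction argument on the Lagrange multipliers $\omega_\rho$. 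Your argument bypasses all of this. The two branches of Proposition \ref{prelim:prop:GN}, combined with the bounds $\norm{\phi}_{\dot{W}^{1, 2}}^2 \leq H_\alpha \parens{u}$ and $\abs{q}^2 \leq 8 \pi r H_\alpha \parens{u}^{1 / 2}$ read off from Lemma \ref{lem:H_alpha} (valid precisely because $\alpha \geq 0$), do collapse to the single monomial $C H_\alpha \parens{u}^{3 \parens{p - 2} / 4} r^{\parens{6 - p} / 2}$ --- the exponent check $\frac{3 \parens{p - 2}}{2} + 2 \parens{3 - p} = \frac{6 - p}{2}$ is right --- and Young's inequality against the quadratic term $\frac{1}{4} H_\alpha \parens{u}^2$ yields $\I_\alpha \parens{r^2} \geq - C r^{4 \parens{6 - p} / \parens{14 - 3 p}}$, whose exponent exceeds $2$ exactly when $p > 2$. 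Your approach is elementary, self-contained, and quantitative (it gives an explicit rate of vanishing), at the cost of being tailored to the Kirchhoff term; the paper's approach is shorter given the appendix but imports the $\delta$-NLSE existence theory. Incidentally, since $\beta = \frac{3 \parens{p - 2}}{4} < 1$ for $p < \frac{10}{3}$, you could just as well absorb into the linear term $\frac{1}{2} H_\alpha \parens{u}$, which would reprove the appendix lemma for $J_\alpha$ itself by the same elementary device. The upper bound $\I_\alpha \parens{r^2} \leq 0$ that you take from Lemma \ref{Kirchhoff:lem:vanishing} is the same one the paper uses.
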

\begin{proof}
On one hand, it is clear that $J_\alpha \parens{u} \leq I_\alpha \parens{u}$ for every
$u \in W^{1, 2}_\alpha$, so
$
\frac{\mathcal{J}_\alpha \parens{\rho^2}}{\rho^2}
\leq
\frac{\mathcal{I}_\alpha \parens{\rho^2}}{\rho^2}
$
for every $\rho \in \ooi{0, \infty}$. On the other hand, it follows from Lemma \ref{Kirchhoff:lem:vanishing} that $\mathcal{I}_\alpha \parens{\rho^2} < 0$ for every
$\rho \in \ooi{0, \infty}$. In this context, the result follows from the Squeeze Theorem and Lemma \ref{delta-NLSE:dich:lem:cond:1}.
\end{proof}

\begin{lem}
\label{Kirchhoff:dich:lem:cond:2}
Condition \ref{dich:cond:2} is satisfied for any
$\rho \in \ooi{0, \infty}$.
\end{lem}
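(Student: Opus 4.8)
The plan is to verify the two halves of Condition~\ref{dich:cond:2} separately, after making the key observation that Lemma~\ref{Kirchhoff:lem:vanishing} is stated for \emph{every} $\rho \in \ooi{0, \infty}$; hence I am free to apply it with its mass parameter equal to any $r \in \oci{0, \rho}$, and both of its conclusions then hold at every such $r$.

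For the strict inequality $\I_\alpha \parens{r^2} < 0$ there is essentially nothing to prove: the first conclusion of Lemma~\ref{Kirchhoff:lem:vanishing} already gives $\I_\alpha \parens{r^2} < \I \parens{r^2} \leq 0$ for every $r \in \ooi{0, \infty}$, so in particular $\I_\alpha \parens{r^2} < 0$ for every $r \in \oci{0, \rho}$.

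For the lower bound $\I_\alpha \parens{r^2} > - \infty$ I would exploit the coercivity of $I_\alpha|_{W^{1, 2}_\alpha \parens{r^2}}$ furnished by the second conclusion of Lemma~\ref{Kirchhoff:lem:vanishing}. Coercivity provides an $R > 0$ such that $I_\alpha \parens{u} \geq 0$ whenever $u \in W^{1, 2}_\alpha \parens{r^2}$ with $\norm{u}_{W^{1, 2}_\alpha} > R$. On the complementary bounded portion of the constraint I would use that $H_\alpha \geq 0$ and $B_\alpha = H_\alpha^2 \geq 0$ to write $I_\alpha \parens{u} \geq - \frac{1}{p} \norm{u}_{L^p}^p$, and then invoke the Sobolev-type embedding $W^{1, 2}_\alpha \hookrightarrow L^p$ (valid since $2 < p < \frac{5}{2} < 3$) to bound $\norm{u}_{L^p}^p$ by a constant depending only on $R$ and the embedding constant. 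Taking the minimum over the two regions yields a finite lower bound for $I_\alpha$ on all of $W^{1, 2}_\alpha \parens{r^2}$, so $\I_\alpha \parens{r^2} > - \infty$.

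Because everything is inherited from Lemma~\ref{Kirchhoff:lem:vanishing}, I do not expect a genuine obstacle here. Should a self-contained proof of the lower bound be preferred to citing coercivity, I would instead start from $I_\alpha \parens{u} \geq \frac{1}{4} H_\alpha \parens{u}^2 - \frac{1}{p} \norm{u}_{L^p}^p$ and control the last term with Proposition~\ref{prelim:prop:GN} applied with $r = p$: on a fixed mass sphere the resulting powers of $\norm{\phi}_{\dot{W}^{1, 2}}$ and $\abs{q}$ carry exponents $\frac{3 (p - 2)}{2} < 4$ and $3 (p - 2) < 8$, while Lemma~\ref{lem:H_alpha} shows that $H_\alpha \parens{u}^2$ dominates $\norm{\phi}_{\dot{W}^{1, 2}}^4$ and $\abs{q}^8$, so Young's inequality absorbs the $L^p$ term into $\frac{1}{4} H_\alpha \parens{u}^2$ up to an additive constant. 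The only point to watch is the subcriticality $p < \frac{5}{2}$, which is exactly what keeps these exponents below those supplied by the quartic term $B_\alpha = H_\alpha^2$.
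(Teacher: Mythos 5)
Your proof is correct, and the half concerning $\I_\alpha\parens{r^2} < 0$ is exactly the paper's: both of you read it off from the first conclusion of Lemma~\ref{Kirchhoff:lem:vanishing}, which holds for every mass. Where you diverge is the lower bound. The paper gets $\I_\alpha\parens{r^2} > -\infty$ in one line by discarding the nonnegative term $\frac{1}{4}B_\alpha = \frac{1}{4}H_\alpha^2$, so that $I_\alpha \geq J_\alpha$ pointwise and hence $\I_\alpha\parens{r^2} \geq \J_\alpha\parens{r^2} > -\infty$ by Lemma~\ref{delta-NLSE:van:lem:cond:2}. You instead route through the coercivity conclusion of Lemma~\ref{Kirchhoff:lem:vanishing}, split the constraint sphere into the region where $I_\alpha \geq 0$ and its bounded complement, and bound $-\frac{1}{p}\norm{u}_{L^p}^p$ there via the embedding $W^{1,2}_\alpha \hookrightarrow L^p$; this is valid but slightly roundabout, since that coercivity is itself deduced in the paper from the very comparison $I_\alpha \geq J_\alpha$ that already gives the finite infimum directly. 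Your self-contained alternative (Proposition~\ref{prelim:prop:GN} plus Young's inequality, absorbing the $L^p$ term into $\frac{1}{4}H_\alpha^2$ using $p < \frac{5}{2}$) also works and mirrors the mechanism of the appendix proof of Lemma~\ref{delta-NLSE:van:lem:cond:2}, which absorbs the same term into $\frac{1}{2}H_\alpha$; the quartic term buys you nothing extra here because the quadratic one already dominates the subcritical exponents.
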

\begin{proof}
It is clear that $J_\alpha \parens{u} \leq I_\alpha \parens{u}$ for every
$u \in W^{1, 2}_\alpha$, so it follows from Lemma \ref{delta-NLSE:van:lem:cond:2} that
$\mathcal{I}_\alpha \parens{r^2} \geq \mathcal{J}_\alpha \parens{r^2} > - \infty$. The other inequality is a corollary of Lemma \ref{Kirchhoff:lem:vanishing}.
\end{proof}

\begin{lem}
\label{Kirchhoff:dich:lem:cond:3}
Condition \ref{dich:cond:3} is satisfied for any
$\rho \in \ooi{0, \infty}$.
\end{lem}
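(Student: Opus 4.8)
The plan is to show that the map $\oci{0, \rho} \ni r \mapsto \I_\alpha \parens{r^2}$ is both upper and lower semicontinuous, from which continuity follows since the domain is a metric space and sequential semicontinuity suffices. The whole argument rests on the multiplicative scaling $u \mapsto t u$. Since $\norm{t u}_{L^2}^2 = t^2 \norm{u}_{L^2}^2$, this map sends $W^{1, 2}_\alpha \parens{r^2}$ onto $W^{1, 2}_\alpha \parens{t^2 r^2}$, and by item 1 of Lemma \ref{lem:properties-of-H_alpha} together with $B_\alpha = H_\alpha^2$ and $C \parens{u} = \norm{u}_{L^p}^p$ one obtains the explicit formula
\[
I_\alpha \parens{t u}
=
\frac{t^2}{2} H_\alpha \parens{u}
+
\frac{t^4}{4} B_\alpha \parens{u}
-
\frac{t^p}{p} C \parens{u}
\quad \text{for every} \quad
t \in \ooi{0, \infty}.
\]
In particular, $t \mapsto I_\alpha \parens{t u}$ is continuous for each fixed $u$.

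For upper semicontinuity, fix $r \in \oci{0, \rho}$ and let $s_n \to r$ in $\oci{0, \rho}$. Given any $u \in W^{1, 2}_\alpha \parens{r^2}$, I would set $t_n := s_n / r \to 1$, so that $t_n u \in W^{1, 2}_\alpha \parens{s_n^2}$ and hence $\I_\alpha \parens{s_n^2} \leq I_\alpha \parens{t_n u} \to I_\alpha \parens{u}$ by the formula above. Thus $\limsup_n \I_\alpha \parens{s_n^2} \leq I_\alpha \parens{u}$, and since the left-hand side is independent of $u$, taking the infimum over $u \in W^{1, 2}_\alpha \parens{r^2}$ yields $\limsup_n \I_\alpha \parens{s_n^2} \leq \I_\alpha \parens{r^2}$.

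For lower semicontinuity, I would choose near-minimizers $u_n \in W^{1, 2}_\alpha \parens{s_n^2}$ with $I_\alpha \parens{u_n} \leq \I_\alpha \parens{s_n^2} + \frac{1}{n}$, set $\tau_n := r / s_n \to 1$ so that $\tau_n u_n \in W^{1, 2}_\alpha \parens{r^2}$, and compute from the scaling formula
\[
I_\alpha \parens{\tau_n u_n} - I_\alpha \parens{u_n}
=
\frac{\tau_n^2 - 1}{2} H_\alpha \parens{u_n}
+
\frac{\tau_n^4 - 1}{4} B_\alpha \parens{u_n}
-
\frac{\tau_n^p - 1}{p} C \parens{u_n}.
\]
Since $\I_\alpha \parens{s_n^2} < 0$ by Lemma \ref{Kirchhoff:dich:lem:cond:2}, the energies $I_\alpha \parens{u_n}$ are bounded above; granting that $\parens{u_n}_{n \in \nat}$ is bounded in $W^{1, 2}_\alpha$, the quantities $H_\alpha \parens{u_n}$, $B_\alpha \parens{u_n}$ and $C \parens{u_n}$ are bounded, so as $\tau_n \to 1$ the right-hand side tends to $0$. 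Then $\I_\alpha \parens{r^2} \leq I_\alpha \parens{\tau_n u_n} = I_\alpha \parens{u_n} + o \parens{1} \leq \I_\alpha \parens{s_n^2} + \frac{1}{n} + o \parens{1}$, and taking $\liminf$ gives $\I_\alpha \parens{r^2} \leq \liminf_n \I_\alpha \parens{s_n^2}$.

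The hard part will be exactly the boundedness of $\parens{u_n}_{n \in \nat}$ claimed above. The coercivity of $I_\alpha|_{W^{1, 2}_\alpha \parens{s^2}}$ from Lemma \ref{Kirchhoff:lem:vanishing} holds at each fixed mass, but here it must be used uniformly for masses near $r^2$. Since $s_n \to r > 0$, the masses $s_n^2$ eventually lie in a compact subinterval $\cci{m^2, \rho^2} \subset \ooi{0, \infty}$ with $m > 0$, so it suffices to observe that the coercivity estimate — obtained by absorbing $\frac{1}{p} C \parens{u} = \frac{1}{p} \norm{u}_{L^p}^p$ into $\frac{1}{2} H_\alpha \parens{u}$ through Proposition \ref{prelim:prop:GN} (with $r = p$) and the explicit form of $H_\alpha$ in Lemma \ref{lem:H_alpha} — can be made with constants depending only on $m$ and $\rho$. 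Indeed, in the range $2 < p < \frac{5}{2}$ the resulting powers of $H_\alpha \parens{u}$ and $\abs{q}$ stay strictly below $1$ relative to the degrees available in $H_\alpha \parens{u}$ and its $\abs{q}^4$ term, so the linear term $\frac{1}{2} H_\alpha \parens{u}$ dominates and one gets a single estimate $I_\alpha \parens{u} \geq c \norm{u}_{W^{1, 2}_\alpha}^2 - C$ valid for all masses in $\cci{m^2, \rho^2}$ simultaneously. Combined with $\sup_n I_\alpha \parens{u_n} < \infty$, this forces $\parens{u_n}_{n \in \nat}$ to be bounded and closes the argument.
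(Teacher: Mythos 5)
Your proposal is correct and follows essentially the same route as the paper: continuity is obtained by proving the two semicontinuity inequalities via the multiplicative scaling $u \mapsto t u$ and the explicit homogeneity of $H_\alpha$, $B_\alpha$ and $C$, with the only delicate point being the boundedness of the near-minimizers at masses $s_n^2$, which both you and the paper settle by the Gagliardo--Nirenberg-type coercivity estimate of Proposition \ref{prelim:prop:GN} made uniform over masses bounded above by $\rho$ (indeed only the upper bound on the mass is needed, since the exponents $\frac{6 - p}{2}$ and $2 \parens{3 - p}$ are positive).
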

\begin{proof}
We argue similarly as in \cite[Proof of Theorem 4.1, Step 4]{bellazziniScalingPropertiesFunctionals2011}. It suffices to prove that if
$r \in \oci{0, \rho}$ and $\set{r_n}_{n \in \nat} \subset \oci{0, \rho}$ is such that
$r_n \to r$ as $n \to \infty$, then
\[
\I_\alpha \parens{r^2}
\leq
\liminf_{n \to \infty} \I_\alpha \parens{r_n^2}
\quad \text{and} \quad
\limsup_{n \to \infty} \I_\alpha \parens{r_n^2}
\leq
\I_\alpha \parens{r^2}.
\]
\\ \\
\noindent \emph{1. Proof that
$
\I_\alpha \parens{r^2}
\leq
\liminf_{n \to \infty} \I_\alpha \parens{r_n^2}
$.
}
Due to Condition \ref{dich:cond:2}, we can associate each
$n \in \nat$ with a
$u_n = \phi_{\lambda, n} + q_n G_\lambda \in W^{1, 2}_\alpha \parens{r_n^2}$
such that
\[
\I_\alpha \parens{r_n^2}
<
I_\alpha \parens{u_n}
<
\I_\alpha \parens{r_n^2} + \frac{1}{n}
<
\frac{1}{n}.
\]
Consider an $\eps \in \ooi{0, \infty}$ and let $K \in \ooi{0, \infty}$ be furnished by Proposition \ref{prelim:prop:GN} so that
\begin{multline}
\label{SP:eqn:aux:3}
\frac{1}{2} \norm{\phi_{\lambda, n}}_{\dot{W}^{1, 2}}^2
+
\frac{\lambda}{2}
\parens*{
	\norm{\phi_{\lambda, n}}_{L^2}^2 - r_n^2
}
+
\frac{1}{2}
\parens*{\alpha + \frac{\sqrt{\lambda}}{4 \pi}} \abs{q_n}^2
-
\\
-
\frac{K}{p}
\parens*{
	\norm{\phi_{\lambda, n}}_{\dot{W}^{1, 2}}^{\frac{3 \parens{p - 2}}{2}}
	r_n^{\frac{6 - p}{2}}
	+
	\abs{q}^{3 \parens{p - 2}} r_n^{2 \parens{3 - p}}
}
\leq
\\
\leq
\frac{1}{2} H_\alpha \parens{u_n}
-
\frac{1}{p} C \parens{u_n}
\leq
I_\alpha \parens{u_n}
<
\frac{1}{n}
\quad \text{for every} \quad
n \in \nat.
\end{multline}
It follows from \eqref{SP:eqn:aux:3} that $\set{\norm{u_n}_{W^{1, 2}_\alpha}}_{n \in \nat}$ is bounded. We obtain
\begin{multline*}
\I_\alpha \parens{r^2}
\leq
I_\alpha \parens*{\frac{r}{r_n} u_n}
=
\\
=
\frac{1}{2}
\parens*{\frac{r}{r_n}}^2
H_\alpha \parens{u_n}
+
\frac{1}{4}
\parens*{\frac{r}{r_n}}^4
H_\alpha \parens{u_n}^2
-
\frac{1}{p}
\parens*{\frac{r}{r_n}}^p
C \parens{u_n}
\end{multline*}
for every $n \in \nat$, so
$
\I_\alpha \parens{r^2}
\leq
\liminf_{n \to \infty} \I_\alpha \parens{r_n^2}
$.
\\ \\
\noindent \emph{2. Proof that
$
\limsup_{n \to \infty} \I_\alpha \parens{r_n^2}
\leq
\I_\alpha \parens{r^2}
$.
}
Suppose that $\set{u_n}_{n \in \nat}$ is a minimizing sequence o
 $I_\alpha|_{W^{1, 2}_\alpha \parens{r^2}}$. Then
$
\I_\alpha \parens{r_n^2}
\leq
I_\alpha \parens{\parens{r_n / r} u_n}
$
for every $n \in \nat$, which implies
$
\limsup_{n \to \infty} \I_\alpha \parens{r_n^2}
\leq
\I_\alpha \parens{r^2}
$.
\end{proof}

\begin{lem}
\label{Kirchhoff:dich:lem:cond:4}
Condition \ref{dich:cond:4} is satisfied for any $\rho \in \ooi{0, \infty}$.
\end{lem}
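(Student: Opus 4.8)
Fix $\rho \in \ooi{0, \infty}$ and $r \in \oci{0, \rho}$; the task is to exhibit one sequence meeting every bullet of Condition \ref{dich:cond:4}. The plan is to produce a minimizing Palais--Smale sequence, pass to a nonzero weak limit, and then verify \eqref{eqn:dichotomy:T_1}--\eqref{eqn:dichotomy:T_3} separately. By Lemma \ref{Kirchhoff:lem:vanishing}, $I_\alpha|_{W^{1, 2}_\alpha \parens{r^2}}$ is coercive and bounded below, so Ekeland's variational principle on the $C^1$ constraint manifold $W^{1, 2}_\alpha \parens{r^2}$ yields a minimizing sequence $\parens{u_n = \phi_{\lambda, n} + q_n G_\lambda}_{n \in \nat}$ that is also Palais--Smale. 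Coercivity makes it bounded, so up to a subsequence $u_n \rightharpoonup u_\infty = \phi_{\lambda, \infty} + q_\infty G_\lambda$ in $W^{1, 2}_\alpha$, which by Lemma \ref{lem:weak-convergence} amounts to $\phi_{\lambda, n} \rightharpoonup \phi_{\lambda, \infty}$ in $W^{1, 2}$ and $q_n \to q_\infty$. Since \eqref{cond:vanishing:1} and \eqref{cond:vanishing:2} hold by Lemma \ref{Kirchhoff:lem:vanishing}, Lemma \ref{lem:vanishing} forces $q_\infty \neq 0$, so $u_\infty \in W^{1, 2}_\alpha \setminus \set{0}$.

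I would verify \eqref{eqn:dichotomy:T_1} and \eqref{eqn:dichotomy:T_2} using the splitting $T_\alpha = \frac{1}{4} B_\alpha - \frac{1}{p} C$ with $B_\alpha \parens{u} = H_\alpha \parens{u}^2$ and $C \parens{u} = \norm{u}_{L^p}^p$. Weak convergence gives $u_n \to u_\infty$ a.e.\ (local Rellich compactness of the regular parts together with $q_n G_\lambda \to q_\infty G_\lambda$), whence the Brezis--Lieb lemma yields $C \parens{u_n} - C \parens{u_n - u_\infty} \to C \parens{u_\infty}$. For the Kirchhoff term, set $a_n = H_\alpha \parens{u_n}$, $b_n = H_\alpha \parens{u_n - u_\infty}$ and $c = H_\alpha \parens{u_\infty}$; Lemma \ref{lem:properties-of-H_alpha} gives $a_n - b_n \to c$, so $a_n^2 - b_n^2 = 2 c b_n + c^2 + o \parens{1}$, and $b_n, c \geq 0$ give $\liminf_n \parens*{B_\alpha \parens{u_n} - B_\alpha \parens{u_n - u_\infty}} \geq c^2 = B_\alpha \parens{u_\infty}$. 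Together these prove \eqref{eqn:dichotomy:T_1}. For \eqref{eqn:dichotomy:T_2}, the identity $\norm{u_n - u_\infty}_{L^2}^2 \to r^2 - \norm{u_\infty}_{L^2}^2$ gives $\delta_n \to 1$, and the homogeneities $H_\alpha \parens{\delta_n w} = \delta_n^2 H_\alpha \parens{w}$, $C \parens{\delta_n w} = \delta_n^p C \parens{w}$ reduce the relevant difference to $\frac{1}{4} \parens{\delta_n^4 - 1} H_\alpha \parens{u_n - u_\infty}^2 - \frac{1}{p} \parens{\delta_n^p - 1} C \parens{u_n - u_\infty}$, which vanishes by boundedness.

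The decisive step is \eqref{eqn:dichotomy:T_3}. Writing $\angles{u \mid v} := \Re S_\alpha \parens{u, v}$, so that $H_\alpha \parens{u} = \angles{u \mid u}$ and $\frac{1}{4} B_\alpha' \parens{u} \brackets{v} = H_\alpha \parens{u} \angles{u \mid v}$, the Kirchhoff contribution to $\parens*{T_\alpha' \parens{u_n} - T_\alpha' \parens{u_m}} \brackets{u_n - u_m}$ equals $x^4 + y^4 - \parens{x^2 + y^2} \angles{u_n \mid u_m}$ with $x = \sqrt{H_\alpha \parens{u_n}}$, $y = \sqrt{H_\alpha \parens{u_m}}$; since $S_\alpha$ is an inner product for $\alpha \geq 0$ (Lemma \ref{lem:H_alpha}), Cauchy--Schwarz bounds this below by $\parens{x - y}^2 \parens*{x^2 + x y + y^2} \geq 0$. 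This strictly positive term is exactly why the equality version of \cite[(2.5)]{bellazziniStableStandingWaves2011} fails and the weaker \eqref{eqn:dichotomy:T_3} is needed. The remaining $C$-contribution is $- \frac{1}{p} \parens*{C' \parens{u_n} - C' \parens{u_m}} \brackets{u_n - u_m}$, which the pointwise monotonicity of $t \mapsto \abs{t}^{p - 2} t$ (for $p \geq 2$) renders \emph{nonpositive}; hence \eqref{eqn:dichotomy:T_3} cannot follow from signs alone, and I expect this term to be the main obstacle: one must show it tends to $0$, i.e.\ that $\parens{u_n}_{n \in \nat}$ is relatively compact in $L^p$.

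To secure that $L^p$-compactness I would carry out the construction inside the radial subspace. Because $I_\alpha$ is $O \parens{3}$-invariant and $G_\lambda$ is radial, the symmetric gradient coincides with the restriction of the gradient, so Ekeland's principle applied among radial functions still produces a Palais--Smale sequence of the full $I_\alpha|_{W^{1, 2}_\alpha \parens{r^2}}$; the compact embedding of radial $W^{1, 2}$ into $L^p$ for $2 < p < 6$ then gives $u_n \to u_\infty$ in $L^p$, so $C' \parens{u_n} \to C' \parens{u_\infty}$ in the dual while $u_n - u_m \to 0$ in $L^p$, killing the $C$-contribution and leaving \eqref{eqn:dichotomy:T_3} with its nonnegative Kirchhoff part. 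The genuinely delicate point---the crux of the whole lemma---is that the radial infimum must equal $\I_\alpha \parens{r^2}$, so that the radial sequence remains minimizing; establishing this requires a rearrangement (or symmetrisation) estimate for $H_\alpha$ adapted to the nonattractive regime $\alpha \geq 0$, and it is here that I expect the real work to lie.
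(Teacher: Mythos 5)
Your construction of the sequence, your proofs of \eqref{eqn:dichotomy:T_1} and \eqref{eqn:dichotomy:T_2}, and your treatment of the Kirchhoff contribution to \eqref{eqn:dichotomy:T_3} (Cauchy--Schwarz for $S_\alpha$ plus the factorization $a^4 - (a^2+b^2)ab + b^4 = (a-b)^2(a^2+ab+b^2) \geq 0$) all coincide with the paper's argument. You have also correctly diagnosed that the only remaining issue is showing that $\parens*{C'(u_n) - C'(u_m)}\brackets{u_n - u_m} \to 0$, since the sign of that term works against you.

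The gap is in how you propose to handle that term. Your plan is to restrict the whole construction to radial functions and invoke Strauss-type compactness, but you yourself flag that this requires the radial infimum to equal $\I_\alpha(r^2)$, which in turn would need a rearrangement inequality for $H_\alpha$ on $W^{1,2}_\alpha$ in the regime $\alpha \geq 0$. No such inequality is stated or proved anywhere, and it is not a routine adaptation of P\'olya--Szeg\H{o}: a symmetric-decreasing rearrangement does not interact in any obvious way with the decomposition $u = \phi_\lambda + qG_\lambda$ or with the charge term $\parens{\alpha + \sqrt{\lambda}/4\pi}\abs{q}^2$. As written, the decisive step of your proof rests on an unproved (and nontrivial) auxiliary result, so the argument is incomplete. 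The detour is also unnecessary. The paper obtains $L^p$-compactness without any symmetrization, via Lemma \ref{delta-NLSE:dich:lem:T_3}: because \eqref{eqn:dichotomy:T_1} and \eqref{eqn:dichotomy:T_2} are already established and \eqref{eqn:SSC} holds for the relevant $r$ inside the proof of Theorem \ref{dich:thm}, the first part of Lemma \ref{dich:lem:2} forces $\norm{u_\infty}_{L^2} = r$; weak $L^2$ convergence together with convergence of norms gives $\norm{u_n - u_\infty}_{L^2} \to 0$, the charges satisfy $q_n \to q_\infty$ by Lemma \ref{lem:weak-convergence}, and the Gagliardo--Nirenberg-type inequality of Proposition \ref{prelim:prop:GN} applied to $u_n - u_m$ then upgrades $L^2$-Cauchyness plus boundedness in $\dot{W}^{1,2}$ to $\norm{u_n - u_m}_{L^p} \to 0$. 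Replacing your radial reduction by this interpolation argument closes the gap.
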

\begin{proof}
\emph{1. Construction of the sequence $\parens{u_n}_{n \in \nat}$ and the function $u_\infty \in W^{1, 2}_\alpha \setminus \set{0}$.}
Due to Ekeland's variational principle, we can let $\parens{u_n}_{n \in \nat}$ denote a minimizing sequence of $I_\alpha|_{W^{1, 2}_\alpha \parens{r^2}}$ which is also a Palais--Smale sequence of $I_\alpha|_{W^{1, 2}_\alpha \parens{r^2}}$. Lemma \ref{Kirchhoff:lem:vanishing} shows that $I_\alpha|_{W^{1, 2}_\alpha \parens{r^2}}$ is coercive, so there exists
$u_\infty \in W^{1, 2}_\alpha$ such that, up to subsequence,
$u_n \rightharpoonup u_\infty$ in $W^{1, 2}_\alpha$ as $n \to \infty$. In view of Lemma \ref{Kirchhoff:lem:vanishing}, we can apply Lemma \ref{lem:vanishing} to deduce that $u_\infty \not \equiv 0$. It is clear that if $K$ is a compact subset of $\real^3$, then $W^{1, 2}_\alpha \parens{K}$ is compactly embedded in $L^p \parens{K}$. As such, we can suppose that $\parens{u_n}_{n \in \nat}$ converges a.e. to $u_\infty$ as $n \to \infty$ (which is the case up to subsequence).
\\ \\
\noindent \emph{2. Proof that \eqref{eqn:dichotomy:T_1} holds.}
The Brézis--Lieb Lemma shows that
\begin{equation}
\label{Kirchhoff:eqn:C_1}
C \parens{u_n - u_\infty}
-
C \parens{u_n}
+
C \parens{u_\infty}
\xrightarrow[n \to \infty]{}
0
\end{equation}
(see \cite{brezisRelationPointwiseConvergence1983}). As such, we only have to prove that
\begin{equation}
\label{Kirchhoff:eqn:Ealpha_1}
H_\alpha \parens{u_\infty}^2
\leq
\liminf_{n \to \infty}
\parens*{H_\alpha \parens{u_n}^2 - H_\alpha \parens{u_n - u_\infty}^2}.
\end{equation}
On one hand, Lemma \ref{lem:properties-of-H_alpha} shows that
\[
\parens*{
	H_\alpha \parens{u_n - u_\infty}
	-
	H_\alpha \parens{u_n}
	+
	H_\alpha \parens{u_\infty}
}^2
\xrightarrow[n \to \infty]{}
0.
\]
On the other hand, it is easy to verify that
\begin{multline*}
\parens*{
	H_\alpha \parens{u_n - u_\infty}
	-
	H_\alpha \parens{u_n}
	+
	H_\alpha \parens{u_\infty}
}^2
=
H_\alpha \parens{u_n - u_\infty}^2
+
\\
+
2 H_\alpha \parens{u_n - u_\infty} H_\alpha \parens{u_\infty}
-
2 H_\alpha \parens{u_n}
\parens*{
	H_\alpha \parens{u_n - u_\infty}
	+
	H_\alpha \parens{u_\infty}
}
+
\\
+
H_\alpha \parens{u_n}^2
+
H_\alpha \parens{u_\infty}^2
\end{multline*}
for every $n \in \nat$. It follows that
\[
H_\alpha \parens{u_n - u_\infty}^2
+
2 H_\alpha \parens{u_n - u_\infty} H_\alpha \parens{u_\infty}
-
H_\alpha \parens{u_n}^2
+
H_\alpha \parens{u_\infty}^2
\xrightarrow[n \to \infty]{}
0.
\]
As $H_\alpha$ is nonnegative, this limit implies
\begin{multline*}
H_\alpha \parens{u_\infty}^2
=
\lim_{n \to \infty} \parens*{
	H_\alpha \parens{u_n}^2
	-
	2 H_\alpha \parens{u_n - u_\infty} H_\alpha \parens{u_\infty}
	-
	H_\alpha \parens{u_n - u_\infty}^2
}
\leq
\\
\leq
\liminf_{n \to \infty} \parens*{
	H_\alpha \parens{u_n}^2
	-
	H_\alpha \parens{u_n - u_\infty}^2
},
\end{multline*}
hence the result.
\\ \\
\noindent \emph{3. Proof that \eqref{eqn:dichotomy:T_2} holds.}
Corollary of Lemma \ref{lem:properties-of-H_alpha}.
\\ \\
\noindent \emph{4. Proof that \eqref{eqn:dichotomy:T_3} holds.}
Due to Lemma \ref{delta-NLSE:dich:lem:T_3}, we only have to prove that
\begin{equation}
\label{Kirchhoff:eqn:Ealpha_3}
\liminf_{n, m \to \infty} \parens*{
	B_\alpha ' \parens{u_n} \brackets{u_n - u_m}
	-
	B_\alpha ' \parens{u_m} \brackets{u_n - u_m}
}
\geq
0.
\end{equation}
Due to the Cauchy--Schwarz Inequality,
\begin{multline*}
\parens*{B_\alpha' \parens{u_n} - B_\alpha' \parens{u_m}}
\brackets{u_n - u_m}
=
\\
=
2 \parens*{
	H_\alpha \parens{u_n}^2
	-
	\parens*{
		H_\alpha \parens{u_n} + H_\alpha \parens{u_m}
	}
	\Re \brackets*{
		S_\alpha \parens{u_n, u_m}
	}
	+
	H_\alpha \parens{u_m}^2
}
\geq
\\
\geq
2 \parens*{
	H_\alpha \parens{u_n}^2
	-
	\parens*{
		H_\alpha \parens{u_n} + H_\alpha \parens{u_m}
	}
	\sqrt{H_\alpha \parens{u_n} H_\alpha \parens{u_m}}
	+
	H_\alpha \parens{u_m}^2
},
\end{multline*}
which is nonnegative because
\[
a^4 - \parens{a^2 + b^2} a b + b^4
=
\parens{a - b}^2 \parens{a^2 + ab + b^2}
\geq
0
\]
for every $a, b \geq 0$. It follows that \eqref{Kirchhoff:eqn:Ealpha_3} holds.
\end{proof}

Before proving that Condition \ref{dich:cond:5} holds, let us introduce a notation for a family of ASPs indexed by $\beta \in \real$. Given $u \in W^{1, 2}_\alpha \setminus \set{0}$, we define the ASP
$g_u^\beta \colon \ooi{0, \infty} \to W^{1, 2}_\alpha$ as given by
\begin{equation}
\label{Kirchhoff:eqn:ASP}
g_u^\beta \parens{\theta}
:=
\theta^{\parens*{1 - \frac{3}{2} \beta}}
u \parens{\theta^{- \beta} \cdot},
\end{equation}
so that $\Theta_{g_u^\beta} \parens{\theta} = \theta^2$.

\begin{lem}
\label{Kirchhoff:dich:lem:cond:5}
If $\rho$ is a sufficiently small positive number, then Condition \ref{dich:cond:5} is satisfied.
\end{lem}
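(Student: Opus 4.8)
The plan is to exhibit, for each minimizer $u \in M\parens*{\rho^2}$, an admissible scaling path inside the explicit family $\set{g_u^\beta}_{\beta \in \real}$ from \eqref{Kirchhoff:eqn:ASP}; since $\Theta_{g_u^\beta} \parens{\theta} = \theta^2$ has $\Theta_{g_u^\beta}' \parens{1} = 2 \neq 0$, every $g_u^\beta$ is an ASP, so I only need one value of $\beta$ with $h_{\alpha, g_u^\beta}' \parens{1} \neq 0$. First I would record the rescalings of the three pieces of $I_\alpha$. Let $q \in \complex$ be the charge of $u$; using $G_\lambda \parens{\theta^{-\beta} \cdot} = \theta^\beta G_{\lambda \theta^{-2\beta}}$ to track the singular part, a direct computation from the formula for $H_\alpha$ gives
\[
H_\alpha \parens*{g_u^\beta \parens{\theta}}
=
\parens*{H_\alpha \parens{u} - \alpha \abs{q}^2} \theta^{2 - 2\beta}
+
\alpha \abs{q}^2 \, \theta^{2 - \beta},
\]
while $C \parens*{g_u^\beta \parens{\theta}} = \theta^{p - 3\beta \parens{p - 2} / 2} \norm{u}_{L^p}^p$ and $B_\alpha \parens*{g_u^\beta \parens{\theta}} = H_\alpha \parens*{g_u^\beta \parens{\theta}}^2$. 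Differentiating $h_{\alpha, g_u^\beta} \parens{\theta} = I_\alpha \parens*{g_u^\beta \parens{\theta}} - \theta^2 I_\alpha \parens{u}$ at $\theta = 1$ and abbreviating $H := H_\alpha \parens{u}$, $C := \norm{u}_{L^p}^p$, I would find that $\beta \mapsto h_{\alpha, g_u^\beta}' \parens{1} = c_0 + \beta c_1$ is affine, with
\[
c_0 = \tfrac{1}{2} H^2 - \tfrac{p - 2}{p} C,
\qquad
c_1 = \tfrac{3 \parens{p - 2}}{2 p} C - \tfrac{1}{2} \parens*{2 H - \alpha \abs{q}^2} \parens*{1 + H}.
\]
An affine function vanishes identically only if both coefficients vanish, so Condition \ref{dich:cond:5} holds for $u$ as soon as $\parens{c_0, c_1} \neq \parens{0, 0}$.

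The second step is the algebraic observation that $\parens{c_0, c_1} = \parens{0, 0}$ is incompatible with $H_\alpha \parens{u}$ being small. By Lemma \ref{lem:H_alpha} one may write $H_\alpha \parens{u}$ as a sum of nonnegative terms one of which is $\alpha \abs{q}^2$, whence the structural bound $\alpha \abs{q}^2 \leq H_\alpha \parens{u} = H$. On the other hand, $c_0 = 0$ gives $\tfrac{3 \parens{p - 2}}{2 p} C = \tfrac{3}{4} H^2$; inserting this into $c_1 = 0$ eliminates $C$ and yields $\alpha \abs{q}^2 = \tfrac{H \parens{4 + H}}{2 \parens{1 + H}}$. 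Since $\tfrac{4 + H}{2 \parens{1 + H}} > 1$ precisely when $H < 2$, for such $H$ this would force $\alpha \abs{q}^2 > H$, contradicting $\alpha \abs{q}^2 \leq H$. Hence whenever $H_\alpha \parens{u} < 2$ we have $\parens{c_0, c_1} \neq \parens{0, 0}$, and some $g_u^\beta$ is the desired ASP.

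It remains to secure $H_\alpha \parens{u} < 2$ for every $u \in M \parens*{\rho^2}$ once $\rho$ is small; this uniform bound is where smallness of $\rho$ enters and is, I expect, the main obstacle. Fix $u$ with $I_\alpha \parens{u} = \I_\alpha \parens*{r^2}$ and $\norm{u}_{L^2}^2 = r^2$, $0 < r \leq \rho$; by Lemma \ref{Kirchhoff:lem:vanishing} together with Lemma \ref{lem:vanishing}, such $u$ satisfies $q \neq 0$ and $\I_\alpha \parens*{r^2} < 0$, so $\tfrac{1}{2} H < \tfrac{1}{p} C$. Applying the second Gagliardo--Nirenberg estimate of Proposition \ref{prelim:prop:GN} in the canonical representation, and bounding $\norm{\phi}_{\dot{W}^{1, 2}}^2 \leq H$ and $\abs{q}^4 / \parens{8 \pi r}^2 \leq H$ via Lemma \ref{lem:H_alpha}, both GN terms turn out to carry the same factor $r^{\parens{6 - p} / 2}$, so that $C \leq K' H^{3 \parens{p - 2} / 4} r^{\parens{6 - p} / 2}$ for a constant $K' = K' \parens{p, \alpha}$. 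Combining with $\tfrac{1}{2} H < \tfrac{1}{p} C$ and dividing by $H > 0$ rearranges to $H^{\parens{10 - 3 p} / 4} \leq \tfrac{2 K'}{p} r^{\parens{6 - p} / 2}$; as $2 < p < \tfrac{5}{2}$ makes the exponent $\parens{10 - 3 p} / 4$ strictly positive, this gives $H_\alpha \parens{u} \leq K'' r^{\kappa} \leq K'' \rho^{\kappa}$ with $\kappa = \tfrac{2 \parens{6 - p}}{10 - 3 p} > 0$, uniformly in $r \leq \rho$ and $u \in M \parens*{\rho^2}$. Choosing $\rho_0$ so small that $K'' \rho_0^{\kappa} < 2$ then ensures $H_\alpha \parens{u} < 2$ for all $u \in M \parens*{\rho^2}$ whenever $\rho < \rho_0$, and combining with the first two steps completes the proof.
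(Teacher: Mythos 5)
Your proposal is correct, and I could verify every step: the closed form $H_\alpha\parens*{g_u^\beta\parens{\theta}} = \parens{H_\alpha\parens{u} - \alpha\abs{q}^2}\theta^{2 - 2\beta} + \alpha\abs{q}^2\theta^{2 - \beta}$ follows from writing $g_u^\beta\parens{\theta} = \theta^{1 - 3\beta/2}\phi\parens{\theta^{-\beta}\cdot} + q\theta^{1 - \beta/2}G_{\lambda\theta^{-2\beta}}$, the coefficients $c_0, c_1$ of the affine function $\beta \mapsto h_{\alpha, g_u^\beta}'\parens{1}$ are as you state, and the identity $\alpha\abs{q}^2 = H\parens{4 + H}/\parens{2\parens{1 + H}}$ obtained from $c_0 = c_1 = 0$ does contradict $\alpha\abs{q}^2 \leq H_\alpha\parens{u}$ (valid by Lemma \ref{lem:H_alpha} in the canonical representation) as soon as $0 < H < 2$, while $H = 0$ is excluded since $u \not\equiv 0$. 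The first half of your argument — the family $g_u^\beta$ and the two identities $c_0 = 0$, $c_1 = 0$ — is exactly the skeleton of the paper's proof (its equations \eqref{Kirchhoff:eqn:aux:4}--\eqref{Kirchhoff:eqn:aux:5} are these two identities written out in the representation $u_n = \phi_n + q_n G_{\nu_n}$). Where you genuinely diverge is in how smallness of $\rho$ produces the contradiction. The paper argues by contradiction along a sequence of masses $r_n \to 0$, invokes Condition \ref{dich:cond:1} (hence the appendix Lemma \ref{delta-NLSE:dich:lem:cond:1}) to get $\norm{\phi_n}_{\dot{W}^{1,2}}, \abs{q_n}^2 / r_n \to 0$, and closes with a self-bound of the form $X_n \leq 3 X_n^2$ with $X_n \to 0$. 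You instead prove the direct a priori estimate $H_\alpha\parens{u} \leq K'' \rho^{\kappa}$ for every $u \in M\parens{\rho^2}$, by combining $\I_\alpha\parens{r^2} < 0$ with the pointwise Gagliardo--Nirenberg bound $C\parens{u} \leq K' H^{3\parens{p - 2}/4} r^{\parens{6 - p}/2}$ (your observation that both terms of Proposition \ref{prelim:prop:GN} carry the same power of $r$ after using $\norm{\phi}_{\dot{W}^{1,2}}^2 \leq H$ and $\abs{q}^4 \leq \parens{8\pi r}^2 H$ is correct, and the exponent $\parens{10 - 3p}/4$ is positive for $p < 5/2$). This buys you an explicit threshold ($H_\alpha\parens{u} < 2$) and an explicit admissible $\rho_0$, bypasses Condition \ref{dich:cond:1} entirely in this step, and replaces the paper's asymptotic contradiction with a clean algebraic one; the only minor points to make explicit in a final write-up are the degenerate case $H_\alpha\parens{u} = 0$ (impossible since $\norm{u}_{L^2} = r > 0$) and the continuity of $\theta \mapsto g_u^\beta\parens{\theta}$ in $W^{1,2}_\alpha$, which the paper also takes for granted.
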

\begin{proof}
By contradiction, suppose that Condition \ref{dich:cond:5} is not satisfied for any
$\rho \in \ooi{0, \infty}$. In particular, there exists
$\set{u_n}_{n \in \nat} \subset W^{1, 2}_\alpha$ such that
\begin{itemize}
\item
$r_n := \norm{u_n}_{L^2} \to 0$ as $n \to \infty$;
\item
given $n \in \nat$,
\begin{itemize}
\item
$u_n$ solves \eqref{eqn:AMP:3} in the case $r = r_n$;
\item
if $g_{u_n} \colon \ooi{0, \infty} \to W^{1, 2}_\alpha$ is an ASP of $u_n$, then $h'_{\alpha, g_{u_n}} \parens{1} = 0$.
\end{itemize}
\end{itemize}
In view of Lemma \ref{lem:vanishing}, we can associate each
$n \in \nat$ with a pair
$\parens{\phi_n, q_n} \in W^{1, 2} \times \parens{\complex \setminus \set{0}}$
such that $u_n = \phi_n + q_n G_{\nu_n}$, where
\[
\nu_n
:=
\frac{\abs{q_n}^4}{2^4 \parens{8 \pi}^2 r_n^4}.
\]
An elementary computation shows that
\begin{multline*}
h_{\alpha, g_{u_n}^\beta} \parens{\theta}
=
\frac{1}{2}
\parens*{
	H_\alpha \parens*{g_{u_n}^\beta \parens{\theta}}
	-
	\theta^2 H_\alpha \parens{u_n}
}
+
\\
+
\frac{1}{4}
\parens*{
	H_\alpha \parens*{g_{u_n}^\beta \parens{\theta}}^2
	-
	\theta^2 H_\alpha \parens{u_n}^2
}
-
\\
-
\frac{1}{p}
\parens*{
	\theta^{\parens*{
			\parens*{1 - \frac{3}{2} \beta} p + 3 \beta
	}}
	-
	\theta^2
}
C \parens{u_n},
\end{multline*}
where
\begin{multline*}
H_\alpha \parens*{g_{u_n}^\beta \parens{\theta}}
-
\theta^2 H_\alpha \parens{u_n}
=
\\
=
\begin{bmatrix}
\theta^{\parens{2 - 2 \beta}} - \theta^2
&
0
&
\theta^{\parens{2 - \beta}} - \theta^2
&
\theta^{\parens{2 - 2 \beta}} - \theta^2
\end{bmatrix}
\underbrace{
	\begin{bmatrix}
	\norm{\phi_n}_{\dot{W}^{1, 2}}^2
	\\
	\nu_n \parens*{\norm{\phi_n}_{L^2}^2 - r_n^2}
	\\
	\alpha \abs{q_n}^2
	\\
	\frac{\sqrt{\nu_n}}{4 \pi} \abs{q_n}^2
	\end{bmatrix}
}_{=: x_n}
\end{multline*}
and
\begin{multline*}
H_\alpha \parens*{g_u^\beta \parens{\theta}}^2
-
\theta^2 H_\alpha \parens{u}^2
=
\\
=
x_n^t
\begin{bmatrix}
\theta^{\parens{4 - 4 \beta}} - \theta^{2}
&
\theta^{\parens{4 - 2 \beta}} - \theta^{2}
&
\theta^{\parens{4 - 3 \beta}} - \theta^{2}
&
\theta^{\parens{4 - 4 \beta}} - \theta^{2}
\\
\ast
&
\theta^{4} - \theta^{2}
&
\theta^{\parens{4 - \beta}} - \theta^{2}
&
\theta^{\parens{4 - 2 \beta}} - \theta^{2}
\\
\ast
&
\ast
&
\theta^{\parens{4 - 2 \beta}} - \theta^{2}
&
\theta^{\parens{4 - 3 \beta}} - \theta^{2}
\\
\ast
&
\ast
&
\ast
&
\theta^{\parens{4 - 4 \beta}} - \theta^{2}
\end{bmatrix}
x_n.
\end{multline*}
It follows from the hypothesis that
\begin{multline}
\label{Kirchhoff:eqn:aux:3}
h_{\alpha, g_{u_n}^\beta} ' \parens{1}
=
\frac{1}{2}
\times
\left. \frac{\dif}{\dif \theta} \parens*{
	H_\alpha \parens*{g_{u_n}^\beta \parens{\theta}}
	-
	\theta^2 H_\alpha \parens{u_n}
} \right|_{\theta = 1}
+
\\
+
\frac{1}{4}
\times
\left. \frac{\dif}{\dif \theta} \parens*{
	H_\alpha \parens*{g_{u_n}^\beta \parens{\theta}}^2
	-
	\theta^2 H_\alpha \parens{u_n}^2
} \right|_{\theta = 1}
-
\\
-
\frac{1}{p}
\parens*{\parens*{1 - \frac{3}{2} \beta} p + 3 \beta - 2}
C \parens{u_n}
=
0,
\end{multline}
where
\[
\frac{1}{2}
\times
\left. \frac{\dif}{\dif \theta} \parens*{
	H_\alpha \parens*{g_{u_n}^\beta \parens{\theta}}
	-
	\theta^2 H_\alpha \parens{u_n}
} \right|_{\theta = 1}
=
-
\beta \norm{\phi_n}_{\dot{W}^{1, 2}}^2
-
\beta \frac{\alpha}{2} \abs{q_n}^2
-
\beta \frac{\sqrt{\nu_n}}{4 \pi} \abs{q_n}^2
\]
and
\begin{multline*}
\left. \frac{\dif}{\dif \theta} \parens*{
	H_\alpha \parens*{g_u^\beta \parens{\theta}}^2
	-
	\theta^2 H_\alpha \parens{u}^2
} \right|_{\theta = 1}
=
\\
=
x_n^t
\begin{bmatrix}
2 - 4 \beta
&
2 - 2 \beta
&
2 - 3 \beta
&
2 - 4 \beta
\\
\ast
&
2
&
2 - \beta
&
2 - 2 \beta
\\
\ast
&
\ast
&
2 - 2 \beta
&
2 - 3 \beta
\\
\ast
&
\ast
&
\ast
&
2 - 4 \beta
\end{bmatrix}
x_n.
\end{multline*}
The fact that \eqref{Kirchhoff:eqn:aux:3} holds for every
$\beta \in \real$ is equivalent to \eqref{Kirchhoff:eqn:aux:4} and \eqref{Kirchhoff:eqn:aux:5}, where
\begin{equation}
\label{Kirchhoff:eqn:aux:4}
\frac{1}{2} H_\alpha \parens{u_n}^2 = \frac{p - 2}{p} C \parens{u_n};
\end{equation}
\begin{equation}
\label{Kirchhoff:eqn:aux:5}
\frac{3}{2} \times \frac{p - 2}{p} C \parens{u_n}
=
\norm{\phi_n}_{\dot{W}^{1, 2}}^2
+
\frac{\alpha}{2} \abs{q_n}^2
+
\frac{\sqrt{\nu_n}}{4 \pi} \abs{q_n}^2
+
\frac{1}{4} A_n
\end{equation}
and
\[
A_n
:=
x_n^t
\begin{bmatrix}
4	 & 2 & 3 & 4
\\
\ast	 & 0 & 1 & 2
\\
\ast & \ast & 2 & 3
\\
\ast & \ast & \ast & 4
\end{bmatrix}
x_n.
\]
As $u_n = \phi_n + q_n G_{\nu_n}$, the Triangle Inequality implies
\begin{multline}
\label{Kirchhoff:eqn:triangular}
\nu_n \abs*{\norm{\phi_n}_{L^2}^2 - r_n^2}
=
\nu_n
\abs*{
	\parens*{\norm{\phi_n}_{L^2} - r_n}
	\parens*{\norm{\phi_n}_{L^2} + r_n}
}
\leq
\\
\leq
\nu_n
\abs{q_n} \norm{G_{\nu_n}}_{L^2}
\parens*{\abs{q_n} \norm{G_{\nu_n}}_{L^2} + 2 r_n}
=
8 \nu_n r_n^2
=
\frac{\sqrt{\nu_n}}{4 \pi} \abs{q_n}^2.
\end{multline}

We claim that
\begin{equation}
\label{Kirchhoff:eqn:An_geq_0}
A_n \geq 0.
\end{equation}
Indeed, it follows from \eqref{Kirchhoff:eqn:triangular} that
\begin{multline*}
A_n
=
x_n^t
\begin{bmatrix}
4 & 0 & 3 & 4
\\
\ast & 0 & 0 & 0
\\
\ast & \ast & 2 & 3
\\
\ast & \ast & \ast & 4
\end{bmatrix}
x_n
+
4
\norm{\phi_n}_{\dot{W}^{1, 2}}^2
\nu_n \parens*{\norm{\phi_n}_{L^2}^2 - r_n^2}
+
\\
+
2
\alpha \abs{q_n}^2
\nu_n \parens*{\norm{\phi_n}_{L^2}^2 - r_n^2}
+
4
\nu_n \parens*{\norm{\phi_n}_{L^2}^2 - r_n^2}
\frac{\sqrt{\nu_n}}{4 \pi} \abs{q_n}^2
\geq
\\
\geq
x_n^t
\begin{bmatrix}
4 & 0 & 3 & 4
\\
\ast & 0 & 0 & 0
\\
\ast & \ast & 2 & 3
\\
\ast & \ast & \ast & 4
\end{bmatrix}
x_n
-
4
\norm{\phi_n}_{\dot{W}^{1, 2}}^2
\frac{\sqrt{\nu_n}}{4 \pi} \abs{q_n}^2
-
2
\alpha \abs{q_n}^2
\frac{\sqrt{\nu_n}}{4 \pi} \abs{q_n}^2
-
\\
-
4
\parens*{\frac{\sqrt{\nu_n}}{4 \pi} \abs{q_n}^2}^2
=
x_n^t
\begin{bmatrix}
4 & 0 & 3 & 2
\\
\ast & 0 & 0 & 0
\\
\ast & \ast & 2 & 2
\\
\ast & \ast & \ast & 0
\end{bmatrix}
x_n
\geq
0.
\end{multline*}

It follows from \eqref{Kirchhoff:eqn:aux:4} and \eqref{Kirchhoff:eqn:aux:5} that
\begin{multline}
\label{Kirchhoff:eqn:aux:7}
I_\alpha \parens{u_n}
=
-
\frac{14 - 5 p}{6 \parens{p - 2}}
\norm{\phi_n}_{\dot{W}^{1, 2}}^{2}
+
\frac{\nu_n}{2}
\parens*{
	\norm{\phi_n}_{L^{2}}^{2}
	-
	r_n^2
}
-
\frac{5 - 2 p}{3 \parens{p - 2}} \alpha \abs{q_n}^2
-
\\
-
\frac{14 - 5 p}{6 \parens{p - 2}}
\cdot
\frac{\sqrt{\nu_n}}{4 \pi} \abs{q_n}^2
-
\frac{4 - p}{3 \parens{p - 2}} A_n.
\end{multline}

Now, we claim that
\begin{equation}
\label{Kirchhoff:eqn:limit}
\norm{\phi_n}_{\dot{W}^{1, 2}}, ~ \frac{\abs{q_n}^2}{r_n}
\xrightarrow[n \to \infty]{}
0.
\end{equation}
Indeed, it follows from \eqref{Kirchhoff:eqn:triangular} and \eqref{Kirchhoff:eqn:aux:7} that
\begin{multline*}
\I_\alpha \parens{r_n^2}
\leq
-
\frac{14 - 5 p}{6 \parens{p - 2}}
\norm{\phi_n}_{\dot{W}^{1, 2}}^{2}
-
\frac{5 - 2 p}{3 \parens{p - 2}} \alpha \abs{q_n}^2
-
\\
-
\frac{2}{3} \times \frac{5 - 2 p}{p - 2}
\frac{\sqrt{\nu_n}}{4 \pi} \abs{q_n}^2
-
\frac{4 - p}{3 \parens{p - 1}} A_n
\leq
0.
\end{multline*}
The result follows from Condition \ref{dich:cond:1}.

Inequality \eqref{Kirchhoff:eqn:triangular} implies
\[
H_\alpha \parens{u_n}^2
\leq
\parens*{
	\norm{\phi_n}_{\dot{W}^{1, 2}}^2
	+
	\alpha \abs{q_n}^2
	+
	2 \frac{\sqrt{\nu_n}}{4 \pi} \abs{q_n}^2
}^2.
\]
As such, it follows from \eqref{Kirchhoff:eqn:aux:4}, \eqref{Kirchhoff:eqn:aux:5} and \eqref{Kirchhoff:eqn:An_geq_0} that
\begin{multline*}
\norm{\phi_n}_{\dot{W}^{1, 2}}^2
+
\alpha \abs{q_n}^2
+
\frac{\sqrt{\nu_n}}{4 \pi} \abs{q_n}^2
\leq
\frac{3}{4} H_\alpha \parens{u_n}^2
\leq
\\
\leq
\frac{3}{4}
\parens*{
	\norm{\phi_n}_{\dot{W}^{1, 2}}^2
	+
	\alpha \abs{q_n}^2
	+
	2 \frac{\sqrt{\nu_n}}{4 \pi} \abs{q_n}^2
}^2
\end{multline*}
for every $n \in \nat$, which contradicts \eqref{Kirchhoff:eqn:limit}.
\end{proof}

\subsection{Ground states of \eqref{eqn:reduced-delta-SP}}
\label{SP}

\subsubsection{Avoiding vanishing}

\begin{lem}
\label{SP:lem:vanishing}
If $\rho$ is a sufficiently small positive number, then
\begin{enumerate}
\item
$\I_\alpha \parens{\rho^2} < \I \parens{\rho^2} < 0$ (which implies \eqref{cond:vanishing:1});
\item
\eqref{cond:vanishing:2} is satisfied.
\end{enumerate}
\end{lem}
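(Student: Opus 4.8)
The plan is to mirror the structure of the proof of Lemma~\ref{Kirchhoff:lem:vanishing}: part~2 will follow from a pointwise comparison with the $\delta$-NLSE functional, while part~1 reduces, via Lemma~\ref{lem:calI_alpha<calI}, to producing a sufficiently regular negative-energy minimizer of the \emph{classical} Schrödinger--Poisson problem \eqref{eqn:AMP:2}. For part~2 I would observe that the Poisson term is nonnegative, $B \parens{u} = \int \psi_{\abs{u}^2} \abs{u}^2 \geq 0$, since $\psi_{\abs{u}^2} = \abs{\cdot}^{-1} \ast \abs{u}^2 \geq 0$; hence
\[
I_\alpha \parens{u}
=
\frac{1}{2} H_\alpha \parens{u} + \frac{1}{4} B \parens{u} - \frac{1}{p} C \parens{u}
\geq
\frac{1}{2} H_\alpha \parens{u} - \frac{1}{p} C \parens{u}
=
J_\alpha \parens{u}
\quad \text{for every} \quad u \in W^{1, 2}_\alpha,
\]
so coercivity of $I_\alpha|_{W^{1, 2}_\alpha \parens{\rho^2}}$ is immediate from that of $J_\alpha|_{W^{1, 2}_\alpha \parens{\rho^2}}$ in Lemma~\ref{delta-NLSE:van:lem:cond:2}. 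This step needs no restriction on $\rho$.

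For part~1, I would note first that $\I \parens{\rho^2} < 0$ holds for \emph{every} $\rho > 0$ by a scaling test: fixing $w \in C_c^\infty$ with $\norm{w}_{L^2} = 1$ and evaluating $I$ at $u = \rho \sigma^{3 / 2} w \parens{\sigma \cdot} \in W^{1, 2} \parens{\rho^2}$ gives
\[
I \parens{u}
=
\frac{\rho^2 \sigma^2}{2} \norm{w}_{\dot{W}^{1, 2}}^2
+
\frac{\rho^4 \sigma}{4} B \parens{w}
-
\frac{\rho^p \sigma^{3 \parens{p - 2} / 2}}{p} \norm{w}_{L^p}^p,
\]
and since $2 < p < \frac{5}{2}$ forces $\frac{3 \parens{p - 2}}{2} < 1 < 2$, the focusing term dominates as $\sigma \to 0^+$, yielding $I \parens{u} < 0$. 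This strict negativity alone is not enough, however: to apply Lemma~\ref{lem:calI_alpha<calI} I also need an actual minimizer $v \in W^{1, 2} \parens{\rho^2}$ of \eqref{eqn:AMP:2}, which I would import from the analysis of the classical Schrödinger--Poisson--Slater problem in \cite{ruizSchrodingerPoissonEquation2006, bellazziniScalingPropertiesFunctionals2011} and which is available only for small $\rho$.

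Granted such a $v$, the remaining steps are routine. The function $v$ satisfies the Euler--Lagrange equation $- \Delta v + \omega v + \psi_{\abs{v}^2} v = v \abs{v}^{p - 2}$ in $L^2$ for some $\omega \in \real$; since $\psi_{\abs{v}^2} v \in L^2$ and $v \abs{v}^{p - 2} = \abs{v}^{p - 1} \in L^2$ (using $2 \parens{p - 1} < 3$, which holds as $p < \frac{5}{2}$), we get $- \Delta v \in L^2$ and therefore $v \in W^{2, 2}$. As $I$ is invariant under translations and $v \not\equiv 0$, a translation lets me assume $v \parens{0} \neq 0$. Finally, taking $z_v := \psi_{\abs{v}^2} v - v \abs{v}^{p - 2} \in L^2$ realizes $T_\alpha' \parens{v}$ in the form \eqref{eqn:form-of-T_alpha'}, so Lemma~\ref{lem:calI_alpha<calI} yields $\I_\alpha \parens{\rho^2} < \I \parens{\rho^2}$, completing part~1.

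The hard part is the existence of the auxiliary minimizer $v$ at small mass: unlike the Kirchhoff setting, where Ye's theorem supplies minimizers at every mass, the Schrödinger--Poisson energy is only known to be minimized on $W^{1, 2} \parens{\rho^2}$ for small $\rho$ in the regime $2 < p < 3$, so ruling out loss of compactness for the classical problem (equivalently, verifying the sub-additivity underlying \cite{bellazziniScalingPropertiesFunctionals2011}) is the crux and the sole source of the small-mass hypothesis; everything downstream---regularity, translation, and the comparison lemma---is mechanical.
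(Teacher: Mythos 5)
Your proposal is correct and follows essentially the same route as the paper: coercivity via the pointwise bound $I_\alpha \geq J_\alpha$ (since $B \geq 0$) together with Lemma~\ref{delta-NLSE:van:lem:cond:2}, and the strict inequality $\I_\alpha(\rho^2) < \I(\rho^2)$ via an auxiliary small-mass minimizer of the classical Schrödinger--Poisson problem, elliptic regularity to place it in $W^{2,2}$, a translation to arrange $v(0) \neq 0$, and Lemma~\ref{lem:calI_alpha<calI}. The only cosmetic difference is that you verify $\I(\rho^2) < 0$ by an explicit scaling test where the paper cites the corresponding step of Bellazzini--Siciliano.
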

\begin{proof}
\emph{1. Proof that $\I_\alpha \parens{\rho^2} < \I \parens{\rho^2} < 0$.}
Due to \cite[Step 2 in Proof of Theorem 4.1]{bellazziniScalingPropertiesFunctionals2011},
$\I \parens{\rho^2} \in \ooi{- \infty, 0}$ for every $\rho \in \ooi{0, \infty}$.
It follows from \cite[Theorem 4.1]{bellazziniScalingPropertiesFunctionals2011} that if
$\rho$ is a sufficiently small positive number, then there exists
$v \in W^{1, 2} \parens{\rho^2}$ that solves \eqref{eqn:AMP:2}. We know that $\Phi_{\abs{v}^2} v \in L^2$, so we can argue as in
the proof of Lemma \ref{Kirchhoff:lem:vanishing} to conclude.
\\ \\
\noindent \emph{2. Proof that \eqref{cond:vanishing:2} is satisfied.}
It suffices to argue as in the proof of Lemma \ref{Kirchhoff:lem:vanishing}.
\end{proof}

\subsubsection{Avoiding dichotomy}

The following lemma is proved with the arguments in the proofs of Lemmas \ref{Kirchhoff:dich:lem:cond:1}--\ref{Kirchhoff:dich:lem:cond:3}.

\begin{lem}
\label{SP:dich:lem:cond:1-3}
\begin{enumerate}
\item
Condition \ref{dich:cond:1} is satisfied.
\item
If $\rho$ is sufficiently small positive number, then Conditions \ref{dich:cond:2} and \ref{dich:cond:3} are satisfied.
\end{enumerate}
\end{lem}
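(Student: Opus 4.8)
The plan is to transport, essentially line for line, the three arguments of Lemmas \ref{Kirchhoff:dich:lem:cond:1}, \ref{Kirchhoff:dich:lem:cond:2} and \ref{Kirchhoff:dich:lem:cond:3}, the only substantive change being that the Kirchhoff term $B_\alpha \parens{u} = H_\alpha \parens{u}^2$ is everywhere replaced by the Poisson term $B \parens{u} = \int \psi_{\abs{u}^2} \abs{u}^2$. Two structural properties of $B$ make this transfer automatic. First, $B \geq 0$, so the $\delta$-NLSE functional $J_\alpha \parens{u} = \frac{1}{2} H_\alpha \parens{u} - \frac{1}{p} C \parens{u}$ satisfies $J_\alpha \leq I_\alpha$ on all of $W^{1, 2}_\alpha$; this lets me inherit every lower bound already proved for $J_\alpha$. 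Second, $B$ is homogeneous of degree four in the amplitude, i.e. $B \parens{c u} = \abs{c}^4 B \parens{u}$ for $c \in \complex$, which follows from the linearity of $u \mapsto \psi_u$ together with $\psi_{\abs{c u}^2} = \abs{c}^2 \psi_{\abs{u}^2}$. This is exactly the scaling $B_\alpha \parens{c u} = \abs{c}^4 B_\alpha \parens{u}$, so the dilations $\frac{r}{r_n} u_n$ used in the Kirchhoff proofs behave identically here.

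For Condition \ref{dich:cond:1} I would repeat Lemma \ref{Kirchhoff:dich:lem:cond:1}: from $J_\alpha \leq I_\alpha$ I get $\frac{\J_\alpha \parens{\rho^2}}{\rho^2} \leq \frac{\I_\alpha \parens{\rho^2}}{\rho^2}$, Lemma \ref{SP:lem:vanishing} gives $\I_\alpha \parens{\rho^2} < 0$ for small $\rho$, and Lemma \ref{delta-NLSE:dich:lem:cond:1} gives $\frac{\J_\alpha \parens{\rho^2}}{\rho^2} \to 0$; the Squeeze Theorem then forces $\frac{\I_\alpha \parens{\rho^2}}{\rho^2} \to 0$ as $\rho \to 0^+$. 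For Condition \ref{dich:cond:2} I would combine the lower bound $\I_\alpha \parens{r^2} \geq \J_\alpha \parens{r^2} > - \infty$ (Lemma \ref{delta-NLSE:van:lem:cond:2}, applicable because $J_\alpha \leq I_\alpha$) with the strict upper bound $\I_\alpha \parens{r^2} < 0$ supplied by Lemma \ref{SP:lem:vanishing}; since the latter requires $\rho$ small, Condition \ref{dich:cond:2} holds only for small $\rho$, in agreement with the statement.

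Condition \ref{dich:cond:3}, the continuity of $r \mapsto \I_\alpha \parens{r^2}$, is the technical heart, and I would reproduce the two semicontinuity estimates of Lemma \ref{Kirchhoff:dich:lem:cond:3}. For $\I_\alpha \parens{r^2} \leq \liminf_n \I_\alpha \parens{r_n^2}$ I choose near-minimizers $u_n = \phi_{\lambda, n} + q_n G_\lambda$ at mass $r_n$, drop the nonnegative $\frac{1}{4} B \parens{u_n}$ to reduce to $J_\alpha$, and apply the Gagliardo--Nirenberg inequality of Proposition \ref{prelim:prop:GN} to control $C \parens{u_n}$ and conclude that $\parens{\norm{u_n}_{W^{1, 2}_\alpha}}_n$ is bounded; testing $I_\alpha$ against the admissible competitor $\frac{r}{r_n} u_n$ and letting $n \to \infty$ then yields the inequality, the quartic term $\parens{\frac{r}{r_n}}^4 B \parens{u_n}$ playing exactly the role of $\parens{\frac{r}{r_n}}^4 H_\alpha \parens{u_n}^2$ in the Kirchhoff computation. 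The reverse estimate $\limsup_n \I_\alpha \parens{r_n^2} \leq \I_\alpha \parens{r^2}$ follows by testing $\frac{r_n}{r} u_n$ against a minimizing sequence at mass $r$. The only genuine point to check---and the single place where the Poisson nonlinearity enters nontrivially---is that $B \parens{u_n}$ stays bounded along the bounded sequence $\parens{u_n}_n$; this is immediate from the continuity of $B$ on bounded subsets of $W^{1, 2}_\alpha \hookrightarrow L^{12 / 5}$, which is furnished by Corollary \ref{cor:HLS}. I therefore expect no new estimate to be required: the main obstacle is purely the bookkeeping of confirming that each appearance of $B_\alpha$ may be swapped for $B$ without disturbing any limit passage.
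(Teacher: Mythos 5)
Your proposal is correct and coincides with the paper's own (unwritten) proof, which simply asserts that the lemma "is proved with the arguments in the proofs of Lemmas \ref{Kirchhoff:dich:lem:cond:1}--\ref{Kirchhoff:dich:lem:cond:3}"; you have correctly identified the two properties of $B$ (nonnegativity, so $J_\alpha \leq I_\alpha$, and quartic homogeneity plus boundedness on bounded sets via Corollary \ref{cor:HLS}) that make the transfer go through, including the point that the strict negativity $\I_\alpha \parens{\rho^2} < 0$ now comes from Lemma \ref{SP:lem:vanishing} and hence only for small $\rho$, which is why the second item carries a smallness restriction.
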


We need a preliminary lemma before proceeding to Condition \ref{dich:cond:4}. The next result is similar to \cite[Lemma 2.2 (i)]{zhaoExistenceSolutionsSchrodinger2008} and may be proved analogously.

\begin{lem}
\label{SP:dich:lem:ZhaoZhao}
If $u_n \rightharpoonup u_\infty$ in $W^{1, 2}_\alpha$ and $u_n \to u_\infty$ a.e. as
$n \to \infty$, then
\[
B \parens{u_n - u_\infty} - B \parens{u_n} + B \parens{u_\infty}
\xrightarrow[n \to \infty]{}
0.
\]
\end{lem}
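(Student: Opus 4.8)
Write $w_n := u_n - u_\infty$, so that $w_n \rightharpoonup 0$ in $W^{1, 2}_\alpha$ and $w_n \to 0$ a.e. as $n \to \infty$. The plan is to mirror the bilinear Br\'ezis--Lieb argument of \cite{zhaoExistenceSolutionsSchrodinger2008}, exploiting that $B$ is a quartic form built from the Coulomb kernel. For $f, g \in L^{6 / 5}$ set
\[
D \parens{f, g}
:=
\iint \frac{f \parens{x} g \parens{y}}{\abs{x - y}} \dif x \dif y,
\]
so that $B \parens{u} = D \parens{\abs{u}^2, \abs{u}^2}$. Corollary \ref{cor:HLS}, applied with $p = 6 / 5$ and $r = 6$, yields $\psi_f := \abs{\cdot}^{-1} \ast f \in L^6$ with $\norm{\psi_f}_{L^6} \leq C \norm{f}_{L^{6 / 5}}$; hence, by H\"older, $D \parens{f, g} = \int \psi_f g$ and $\abs{D \parens{f, g}} \leq C \norm{f}_{L^{6 / 5}} \norm{g}_{L^{6 / 5}}$.

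First I would expand $\abs{u_n}^2 = \abs{w_n}^2 + \abs{u_\infty}^2 + c_n$ with $c_n := 2 \Re \parens{\overline{w_n} u_\infty}$, writing $\sigma_n := \abs{w_n}^2$ and $\tau := \abs{u_\infty}^2$. Bilinearity and symmetry of $D$ then give
\[
B \parens{u_n} - B \parens{w_n} - B \parens{u_\infty}
=
D \parens{c_n, c_n}
+
2 D \parens{\sigma_n, \tau}
+
2 D \parens{\sigma_n, c_n}
+
2 D \parens{\tau, c_n},
\]
so the statement reduces to showing that these four cross terms tend to $0$. Since $u_n \rightharpoonup u_\infty$ in $W^{1, 2}_\alpha$, the sequence $\parens{w_n}_{n \in \nat}$ is bounded, and the Sobolev-type embedding $W^{1, 2}_\alpha \hookrightarrow L^{12 / 5}$ shows that $\parens{\sigma_n}_{n \in \nat}$ is bounded in $L^{6 / 5}$, that $\parens{c_n}_{n \in \nat}$ is bounded in $L^{6 / 5}$ (via H\"older against $u_\infty \in L^{12 / 5}$), and that $\tau \in L^{6 / 5}$.

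Next I would prove $c_n \to 0$ in $L^{6 / 5}$: the sequence $\abs{w_n}^{6 / 5}$ is bounded in $L^2$ and converges to $0$ a.e., hence converges weakly to $0$ in $L^2$, so pairing against $\abs{u_\infty}^{6 / 5} \in L^2$ gives $\norm{\overline{w_n} u_\infty}_{L^{6 / 5}}^{6 / 5} \to 0$. Combined with the $L^{6 / 5}$-boundedness of $\sigma_n$ and $\tau$, the Hardy--Littlewood--Sobolev bound then forces $D \parens{c_n, c_n}$, $D \parens{\sigma_n, c_n}$ and $D \parens{\tau, c_n}$ to vanish. The term $D \parens{\sigma_n, \tau} = \int \sigma_n \psi_\tau$ is the main obstacle, as $w_n$ need not converge strongly; here I would use that $\sigma_n$ is bounded in $L^{6 / 5}$ and tends to $0$ a.e., hence converges weakly to $0$ in the reflexive space $L^{6 / 5}$, and pair this against the fixed potential $\psi_\tau \in L^6 = \parens*{L^{6 / 5}}'$ to conclude $D \parens{\sigma_n, \tau} \to 0$. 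Summing the four limits yields the claim.
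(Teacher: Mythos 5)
Your proposal is correct, and it is essentially the argument the paper has in mind: the paper omits the proof and simply points to Zhao \& Zhao's Lemma 2.2\,(i), whose proof is exactly this bilinear expansion of the Coulomb form combined with the Hardy--Littlewood--Sobolev bound, the strong $L^{6/5}$-convergence of the cross density $2\Re\parens{\overline{w_n}u_\infty}$, and the weak $L^{6/5}$-convergence of $\abs{w_n}^2$ tested against $\psi_{\abs{u_\infty}^2}\in L^6$. Your write-up supplies the details the paper leaves to the reader, and I see no gaps.
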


\begin{lem}
\label{SP:dich:lem:cond:4}
Condition \ref{dich:cond:4} is satisfied for every $\rho \in \ooi{0, \infty}$.
\end{lem}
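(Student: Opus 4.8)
The plan is to follow the architecture of the proof of Lemma \ref{Kirchhoff:dich:lem:cond:4} line by line, replacing the Kirchhoff nonlinearity $B_\alpha = H_\alpha^2$ by the Coulomb nonlinearity $B \parens{u} = \int \parens*{\psi_{\abs{u}^2} \abs{u}^2}$ at every occurrence. First I would construct the pair $\parens{u_n}_{n \in \nat}$, $u_\infty$ exactly as in Step 1 there: Ekeland's variational principle furnishes a minimizing sequence of $I_\alpha|_{W^{1, 2}_\alpha \parens{r^2}}$ that is simultaneously a Palais--Smale sequence; the coercivity supplied by Lemma \ref{SP:lem:vanishing} yields, up to subsequence, a weak limit $u_n \rightharpoonup u_\infty$ in $W^{1, 2}_\alpha$; Lemma \ref{lem:vanishing} (whose hypotheses hold by Lemma \ref{SP:lem:vanishing}) gives $u_\infty \not\equiv 0$; and the compact embedding $W^{1, 2}_\alpha \parens{K} \hookrightarrow L^p \parens{K}$ on compact $K$ lets me assume $u_n \to u_\infty$ a.e. after passing to a further subsequence.

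For \eqref{eqn:dichotomy:T_1} I would exploit that, in contrast with the Kirchhoff case, both pieces of $T_\alpha = \frac{1}{4} B - \frac{1}{p} C$ split exactly: the Brézis--Lieb Lemma \cite{brezisRelationPointwiseConvergence1983} gives $C \parens{u_n} - C \parens{u_n - u_\infty} \to C \parens{u_\infty}$, while Lemma \ref{SP:dich:lem:ZhaoZhao} gives $B \parens{u_n} - B \parens{u_n - u_\infty} \to B \parens{u_\infty}$. Hence $T_\alpha \parens{u_n} - T_\alpha \parens{u_n - u_\infty} \to T_\alpha \parens{u_\infty}$ and \eqref{eqn:dichotomy:T_1} holds with equality. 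For \eqref{eqn:dichotomy:T_2} I would use the homogeneities $B \parens{\delta v} = \delta^4 B \parens{v}$ and $C \parens{\delta v} = \delta^p C \parens{v}$ for $\delta > 0$, so that $T_\alpha \parens*{\delta_n \parens{u_n - u_\infty}} - T_\alpha \parens{u_n - u_\infty} = \frac{\delta_n^4 - 1}{4} B \parens{u_n - u_\infty} - \frac{\delta_n^p - 1}{p} C \parens{u_n - u_\infty}$; since $\norm{u_n - u_\infty}_{L^2}^2 \to r^2 - \mu^2$ forces $\delta_n \to 1$ and $\parens{u_n - u_\infty}_{n \in \nat}$ is bounded in $W^{1, 2}_\alpha$, the boundedness of $B$ and $C$ along the sequence drives this difference to $0$.

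I expect \eqref{eqn:dichotomy:T_3} to be the main obstacle. As in Step 4 of Lemma \ref{Kirchhoff:dich:lem:cond:4}, Lemma \ref{delta-NLSE:dich:lem:T_3} disposes of the contribution of $C$, so it remains to show $\liminf_{n, m \to \infty} \parens*{B' \parens{u_n} - B' \parens{u_m}} \brackets{u_n - u_m} \geq 0$, and I would in fact establish that each term is already nonnegative. Writing
\[
D \parens{f, g}
:=
\int \int \frac{f \parens{x} g \parens{y}}{\abs{x - y}} \dif x \dif y
\]
for the Coulomb form, one has $B \parens{u} = D \parens{\abs{u}^2, \abs{u}^2}$ and $B' \parens{u} \brackets{w} = 4 D \parens{\abs{u}^2, \Re \brackets{\overline{u} w}}$. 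Setting $P := \Re \brackets{\overline{u_n} u_m}$ and expanding by bilinearity and symmetry of $D$ gives
\[
\parens*{B' \parens{u_n} - B' \parens{u_m}} \brackets{u_n - u_m}
=
4 \parens*{
	D \parens{\abs{u_n}^2, \abs{u_n}^2}
	+
	D \parens{\abs{u_m}^2, \abs{u_m}^2}
	-
	D \parens{\abs{u_n}^2 + \abs{u_m}^2, P}
}.
\]

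The pointwise inequality $\abs{u_n - u_m}^2 \geq 0$ reads $P \leq \frac{1}{2} \parens*{\abs{u_n}^2 + \abs{u_m}^2}$; since the kernel $\abs{x - y}^{-1}$ is positive and the weight $\abs{u_n}^2 + \abs{u_m}^2$ is nonnegative, monotonicity of $D$ in its second argument yields $D \parens{\abs{u_n}^2 + \abs{u_m}^2, P} \leq \frac{1}{2} D \parens{\abs{u_n}^2 + \abs{u_m}^2, \abs{u_n}^2 + \abs{u_m}^2}$. Substituting and regrouping collapses the right-hand side to $2 D \parens{\abs{u_n}^2 - \abs{u_m}^2, \abs{u_n}^2 - \abs{u_m}^2} \geq 0$, the last inequality being the positive semidefiniteness of the Coulomb energy (valid because $\abs{u_n}^2 - \abs{u_m}^2 \in L^{6/5}$ thanks to $W^{1, 2}_\alpha \hookrightarrow L^{12/5}$). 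This establishes \eqref{eqn:dichotomy:T_3}, and the three verified limits together yield Condition \ref{dich:cond:4}. The single genuinely new ingredient relative to the Kirchhoff argument is thus the monotonicity-plus-positivity of $D$ replacing the elementary scalar inequality used there.
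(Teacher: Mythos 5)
Your proposal is correct, and Steps 1--3 (construction of the sequence, \eqref{eqn:dichotomy:T_1} via Br\'ezis--Lieb plus Lemma \ref{SP:dich:lem:ZhaoZhao}, \eqref{eqn:dichotomy:T_2} via homogeneity) coincide with the paper's. The genuine divergence is in the verification of \eqref{eqn:dichotomy:T_3} for the Coulomb term. The paper estimates $B' \parens{u_n} \brackets{u_n - u_m}$ by H\"older and Hardy--Littlewood--Sobolev as
$c \norm{u_n}_{L^{12p/(7p-6)}}^2 \norm{u_n}_{L^p} \norm{u_n - u_m}_{L^2}$
and concludes that the whole expression tends to $0$, which requires the strong convergence $\norm{u_n - u_m}_{L^2} \to 0$; that in turn rests on knowing $\norm{u_\infty}_{L^2} = r$, obtained by feeding \eqref{eqn:dichotomy:T_1}--\eqref{eqn:dichotomy:T_2} back into Lemma \ref{dich:lem:2}. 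You instead prove the stronger termwise statement $\parens{B' \parens{u_n} - B' \parens{u_m}} \brackets{u_n - u_m} \geq 0$: the identity
$\parens{B'\parens{u_n} - B'\parens{u_m}}\brackets{u_n - u_m} = 4 \parens{D\parens{\abs{u_n}^2, \abs{u_n}^2} + D\parens{\abs{u_m}^2, \abs{u_m}^2} - D\parens{\abs{u_n}^2 + \abs{u_m}^2, P}}$ with $P = \Re\brackets{\overline{u_n} u_m}$ is correct, the pointwise bound $P \leq \frac{1}{2}\parens{\abs{u_n}^2 + \abs{u_m}^2}$ together with positivity of the kernel and of the first argument legitimately gives the monotonicity step, and the resulting lower bound $2 D\parens{\abs{u_n}^2 - \abs{u_m}^2, \abs{u_n}^2 - \abs{u_m}^2} \geq 0$ follows from positive definiteness of the Coulomb kernel on $L^{6/5}$ (all the double integrals converge absolutely by Hardy--Littlewood--Sobolev since $W^{1,2}_\alpha \hookrightarrow L^{12/5}$). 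Your route is closer in spirit to the Kirchhoff case (Step 4 of Lemma \ref{Kirchhoff:dich:lem:cond:4}, where the sign is obtained from an elementary scalar inequality), buys a sign for each fixed pair $\parens{n, m}$ rather than only a limit, and avoids invoking strong $L^2$ convergence at this stage; the paper's route is shorter but logically heavier, since it leans on Lemma \ref{dich:lem:2} inside the verification of a hypothesis that is later fed into that same lemma. Both are valid proofs of \eqref{eqn:dichotomy:T_3}.
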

\begin{proof}
\emph{1. Construction of the sequence $\parens{u_n}_{n \in \nat}$ and the function $u_\infty \in W^{1, 2}_\alpha \setminus \set{0}$.}
It suffices to proceed as in the proof of Lemma \ref{Kirchhoff:dich:lem:cond:3}.
\\ \\
\noindent \emph{2. Proof that \eqref{eqn:dichotomy:T_1} holds.}
Corollary of the Brézis--Lieb Lemma and Lemma \ref{SP:dich:lem:ZhaoZhao}.
\\ \\
\noindent \emph{3. Proof that \eqref{eqn:dichotomy:T_2} holds.}
Corollary of the fact that both nonlinear functionals $B$ and $C$ are positively homogeneous.
\\ \\
\noindent \emph{4. Proof that \eqref{eqn:dichotomy:T_3} holds.}
Due to Lemma \ref{delta-NLSE:dich:lem:T_3}, we only have to prove that
\[
\parens*{B ' \parens{u_n} - B ' \parens{u_m}} \brackets{u_n - u_m}
\to
0
\quad \text{as} \quad
n, m \to \infty.
\]
It follows from Hölder's Inequality that
\[
B' \parens{u_n} \brackets{u_n - u_m}
\leq
\norm*{\Phi_{\abs{u_n}^2}}_{L^{\frac{2 p}{p - 2}}}
\norm{u_n}_{L^p}
\norm{u_n - u_m}_{L^2}.
\]
In view of the Hardy--Littlewood--Sobolev Inequality, there exists
$c \in \ooi{0, \infty}$ such that
\[
B' \parens{u_n} \brackets{u_n - u_m}
\leq
c
\norm{u_n}_{L^{\frac{12 p}{7 p - 6}}}^2
\norm{u_n}_{L^p}
\norm{u_n - u_m}_{L^2}
\]
for every $n \in \nat$. Let us show that
$B' \parens{u_n} \brackets{u_n - u_m} \to 0$
as $n, m \to \infty$. On one hand, we know that
$I_\alpha|_{W^{1, 2}_\alpha \parens{\rho_0^2}}$ is coercive, so
$\set{u_n}_{n \in \nat}$ is bounded in $W^{1, 2}_\alpha$. As
\[
2 < p + 1 < \frac{5}{2} < \frac{12 p}{7 p - 6} < 3,
\]
we deduce that $\set{u_n}_{n \in \nat}$ is also bounded in
$L^{p + 1} \cap L^{\frac{12 p}{7 p - 6}}$. On the other hand, we have $u_n \rightharpoonup u_\infty$ in $L^2$ as
$n \to \infty$ and $\norm{u_\infty}_{L^2} = \rho_0$, so
$\lim_{n \to \infty} \norm{u_n - u_m}_{L^2} = 0$, hence the result.
\end{proof}

\begin{lem}
\label{SP:dich:lem:cond:5}
If $\rho$ is a sufficiently small positive number, then Condition \ref{dich:cond:5} is satisfied.
\end{lem}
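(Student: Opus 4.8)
The plan is to argue by contradiction, following the proof of Lemma~\ref{Kirchhoff:dich:lem:cond:5} and replacing the Kirchhoff nonlinearity by the Coulomb term. Supposing Condition~\ref{dich:cond:5} fails for every $\rho \in \ooi{0, \infty}$, I would produce a sequence $\set{u_n}_{n \in \nat} \subset W^{1, 2}_\alpha$ with $r_n := \norm{u_n}_{L^2} \to 0$ such that, for each $n$, $u_n$ solves \eqref{eqn:AMP:3} with $r = r_n$ and every ASP of $u_n$ has vanishing $h_{\alpha, g_{u_n}}' \parens{1}$. By Lemma~\ref{lem:vanishing} I would write $u_n = \phi_n + q_n G_{\nu_n}$ with $q_n \neq 0$ and $\nu_n := \frac{\abs{q_n}^4}{2^4 \parens{8 \pi}^2 r_n^4}$, and then test the hypothesis $h_{\alpha, g_{u_n}}' \parens{1} = 0$ against the explicit family of scaling paths $g_{u_n}^\beta$ from \eqref{Kirchhoff:eqn:ASP}, now for every $\beta \in \real$.

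The central computation is the scaling of the three pieces of $I_\alpha$ along $g_{u_n}^\beta$. The scalings of $H_\alpha$ and of $C$ are the same as in the Kirchhoff case, whereas the decisive simplification is that the Coulomb energy obeys the Newtonian scaling
\[
B \parens*{g_{u_n}^\beta \parens{\theta}} = \theta^{4 - \beta} B \parens{u_n},
\]
a single power of $\theta$; in particular, the quadratic form $A_n$ that appears in the Kirchhoff analysis is absent here, so the algebra is lighter. Imposing $h_{\alpha, g_{u_n}^\beta}' \parens{1} = 0$ for all $\beta \in \real$ and separating the $\beta$-independent and $\beta$-linear parts would give two stationarity identities analogous to \eqref{Kirchhoff:eqn:aux:4} and \eqref{Kirchhoff:eqn:aux:5}: a Nehari-type identity
\[
\frac{1}{2} B \parens{u_n} = \frac{p - 2}{p} C \parens{u_n}
\]
and a Pohozaev-type identity which, after eliminating $B \parens{u_n}$ through the Nehari identity, reads
\[
\norm{\phi_n}_{\dot{W}^{1, 2}}^2 + \nu_n \parens*{\norm{\phi_n}_{L^2}^2 - r_n^2} + \frac{\sqrt{\nu_n}}{4 \pi} \abs{q_n}^2 + \frac{\alpha}{2} \abs{q_n}^2 = \frac{p - 2}{p} C \parens{u_n}.
\]

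Substituting both identities into $I_\alpha \parens{u_n}$ as in \eqref{Kirchhoff:eqn:aux:7}, I expect to reach
\[
I_\alpha \parens{u_n} = \frac{p - 3}{p - 2} \parens*{\norm{\phi_n}_{\dot{W}^{1, 2}}^2 + \nu_n \parens*{\norm{\phi_n}_{L^2}^2 - r_n^2} + \frac{\sqrt{\nu_n}}{4 \pi} \abs{q_n}^2} + \frac{3 p - 8}{4 \parens{p - 2}} \alpha \abs{q_n}^2,
\]
whose two coefficients are negative for $2 < p < \frac{5}{2}$. Together with $\I_\alpha \parens{r_n^2} = I_\alpha \parens{u_n} \leq 0$, the bound \eqref{Kirchhoff:eqn:triangular} (so that the middle summand is controlled by the third), and Condition~\ref{dich:cond:1}, a squeezing argument identical to the one producing \eqref{Kirchhoff:eqn:limit} should yield $\norm{\phi_n}_{\dot{W}^{1, 2}} \to 0$ and $\abs{q_n}^2 / r_n \to 0$ as $n \to \infty$.

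The hard part is the final contradiction, and here the Coulomb term behaves quite differently from $B_\alpha = H_\alpha^2$: in the Kirchhoff case the Nehari identity closes algebraically into an inequality of the form $S_n \lesssim S_n^2$, but $B$ cannot be reduced to the $H_\alpha$-data, so I would instead estimate $B \parens{u_n}$ and $C \parens{u_n}$ separately using the Hardy--Littlewood--Sobolev inequality (Corollary~\ref{cor:HLS}) and the Gagliardo--Nirenberg inequality (Proposition~\ref{prelim:prop:GN}). The Nehari identity pins the quotient $B \parens{u_n} / C \parens{u_n}$ to the constant $\frac{2 \parens{p - 2}}{p}$, whereas the genuinely quartic functional $B$ and the homogeneity-$p$ functional $C$ have incompatible scalings: the contribution of the singular part $q_n G_{\nu_n}$ gives $B \asymp \abs{q_n}^2 r_n^2$ and $C \asymp \abs{q_n}^{3 \parens{p - 2}} r_n^{2 \parens{3 - p}}$, whose quotient behaves like $\abs{q_n}^{8 - 3 p} r_n^{2 p - 4} \to 0$ for $2 < p < \frac{5}{2}$. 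The main obstacle is to make this rigorous: one must use the already established limits $\norm{\phi_n}_{\dot{W}^{1, 2}} \to 0$ and $\abs{q_n}^2 / r_n \to 0$ to show that the singular part dominates both functionals, controlling the cross terms between $\phi_n$ and $q_n G_{\nu_n}$ in the nonlocal quantity $B$ and in $C \parens{u_n} = \norm{u_n}_{L^p}^p$, so that the constant quotient forced by the Nehari identity becomes incompatible with its vanishing as $r_n \to 0$.
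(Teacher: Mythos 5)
Your contradiction setup, the family of scaling paths $g_{u_n}^\beta$, the extraction of a Nehari identity and a Pohozaev identity from $h_{\alpha, g_{u_n}^\beta}'\parens{1} = 0$ for all $\beta$, and the use of Condition \ref{dich:cond:1} together with the triangle-inequality bound to squeeze out $\norm{\phi_n}_{\dot{W}^{1,2}} \to 0$ and $\abs{q_n}^2 / r_n \to 0$ all track the paper's proof. The genuine gap is in the final step. The paper closes the argument without ever comparing the regular and singular parts of $u_n$: it combines the Nehari identity $\frac{1}{2} B \parens{u_n} = \frac{p-2}{p} C \parens{u_n}$ with the Hardy--Littlewood--Sobolev bound $B \parens{u_n} \leq K \norm{u_n}_{L^{12/5}}^4$ and an interpolation of $\norm{\cdot}_{L^{12/5}}$ between $\norm{\cdot}_{L^p}$ and a Lebesgue norm that stays bounded (three cases according to the position of $p$ relative to $\frac{12}{5}$), which forces the constant $\frac{p-2}{p}$ to be bounded by a quantity tending to $0$ because $C \parens{u_n} \to 0$. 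Your route instead needs the singular part to dominate both $B \parens{u_n}$ and $C \parens{u_n}$, and this does not follow from the limits you have established: $\norm{\phi_n}_{\dot{W}^{1,2}} \to 0$ and $\abs{q_n}^2 / r_n \to 0$ are two independent smallness statements with no control on their ratio. Concretely, comparing $\norm{\phi_n}_{L^p}^p \lesssim \norm{\phi_n}_{\dot{W}^{1,2}}^{3 \parens{p - 2}/2} r_n^{\parens{6 - p}/2}$ with $\norm{q_n G_{\nu_n}}_{L^p}^p \asymp \abs{q_n}^{3 \parens{p - 2}} r_n^{2 \parens{3 - p}}$ reduces to the size of $\norm{\phi_n}_{\dot{W}^{1,2}} r_n / \abs{q_n}^2$, a quotient of two quantities both tending to $0$ and hence undetermined; along a subsequence where $\phi_n$ carries the bulk of $B$ and $C$ your scaling heuristic $B / C \asymp \abs{q_n}^{8 - 3p} r_n^{2p - 4}$ yields nothing. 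The decomposition-free interpolation argument is the missing ingredient, and it is not a cosmetic difference: it is what makes the conclusion uniform over all ways the mass can distribute between $\phi_n$ and $q_n G_{\nu_n}$.

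A secondary point that needs care before the endgame is even reached. Your Pohozaev identity retains the summand $\nu_n \parens{\norm{\phi_n}_{L^2}^2 - r_n^2}$, whereas the paper's \eqref{SP:eqn:aux:scaling_1} does not, and in your resulting expression for $I_\alpha \parens{u_n}$ this summand appears with the coefficient $\frac{p - 3}{p - 2}$ rather than the paper's $\frac{1}{2}$ in \eqref{SP:eqn:aux:7}. With that coefficient, absorbing the summand through \eqref{Kirchhoff:eqn:triangular} or \eqref{SP:eqn:triangular} leaves a \emph{nonnegative} total coefficient in front of $\frac{\sqrt{\nu_n}}{4 \pi} \abs{q_n}^2$ in the upper bound for $\I_\alpha \parens{r_n^2}$, so the squeezing against Condition \ref{dich:cond:1} no longer isolates that term; for $\alpha = 0$ you then do not recover $\abs{q_n}^2 / r_n \to 0$, which both your endgame and the paper's require. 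To repair this within your formulation you would need a sharper bound on $\nu_n \parens{\norm{\phi_n}_{L^2}^2 - r_n^2}$, e.g.\ by expanding $\norm{\phi_n}_{L^2}^2 - r_n^2 = \abs{q_n}^2 \norm{G_{\nu_n}}_{L^2}^2 - 2 \Re \angles{u_n \mid q_n G_{\nu_n}}_{L^2}$ and estimating the two pieces separately, rather than invoking the squeezing step as ``identical'' to \eqref{Kirchhoff:eqn:limit}.
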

\begin{proof}
By contradiction, suppose that Condition \ref{dich:cond:5} is not satisfied for any
$\rho \in \ooi{0, \infty}$. In particular, there exists
$\set{u_n}_{n \in \nat} \subset W^{1, 2}_\alpha$ such that
\begin{itemize}
\item
$r_n := \norm{u_n}_{L^2} \to 0$ as $n \to \infty$;
\item
given $n \in \nat$,
\begin{itemize}
\item
$u_n$ solves \eqref{eqn:AMP:3} in the case $r = r_n$;
\item
if $g_{u_n} \colon \ooi{0, \infty} \to W^{1, 2}_\alpha$ is an ASP of $u_n$, then $h'_{\alpha, g_{u_n}} \parens{1} = 0$.
\end{itemize}
\end{itemize}
Given $n \in \nat$, let
$\parens{\phi_n, q_n} \in W^{1, 2} \times \parens{\complex \setminus \set{0}}$
be such that $u_n = \phi_n + q_n G_{\nu_n}$, where
\[
\nu_n
:=
\frac{\abs{q_n}^4}{\parens{8 \pi}^2 r_n^4}.
\]
With respect to the ASP \eqref{Kirchhoff:eqn:ASP},
\begin{multline}
\label{SP:eqn:aux:4}
h_{\alpha, g_{u_n}^\beta}' \parens{1}
=
-
\beta
\norm{\phi_n}_{\dot{W}^{1, 2}}^2
-
\frac{\beta}{2} \alpha \abs{q_n}^2
-
\beta \frac{\sqrt{\nu_n}}{4 \pi} \abs{q_n}^2
+
\frac{2 - \beta}{4}
B \parens{u_n}
-
\\
-
\frac{1}{p}
\parens*{\parens*{1 - \frac{3}{2} \beta} p + 3 \beta - 2}
C \parens{u_n}
=
0.
\end{multline}
The fact that \eqref{SP:eqn:aux:4} holds for every
$\beta \in \real$ is equivalent to the two following equalities:
\begin{equation}
\label{SP:eqn:aux:scaling_1}
-
\norm{\phi_n}_{\dot{W}^{1, 2}}^2
-
\frac{\alpha}{2} \abs{q_n}^2
-
\frac{\sqrt{\nu_n}}{4 \pi} \abs{q_n}^2
-
\frac{1}{4} B \parens{u_n}
+
\frac{3}{2} \times \frac{p - 2}{p} C \parens{u_n}
=
0
\end{equation}
and
\begin{equation}
\label{SP:eqn:aux:scaling_2}
\frac{1}{2} B \parens{u_n}
-
\frac{p - 2}{p} C \parens{u_n}
=
0.
\end{equation}
In view of \eqref{SP:eqn:aux:scaling_1} and \eqref{SP:eqn:aux:scaling_2}, we obtain
\begin{multline}
\label{SP:eqn:aux:7}
I_\alpha \parens{u_n}
=
-
\frac{3 - p}{p - 2}
\norm{\phi_n}_{\dot{W}^{1, 2}}^2
+
\frac{\nu_n}{2} \parens*{\norm{\phi_n}_{L^2}^2 - r_n^2}
-
\frac{8 - 3 p}{4 \parens{p - 2}} \alpha \abs{q_n}^2
-
\\
-
\frac{3 - p}{p - 2}
\times
\frac{\sqrt{\nu_n}}{4 \pi} \abs{q_n}^2.
\end{multline}
As $u_n = \phi_n + q_n G_{\nu_n}$, the Triangle Inequality implies
\begin{multline}
\label{SP:eqn:triangular}
\frac{\nu_n}{2} \abs*{\norm{\phi_n}_{L^2}^2 - r_n^2}
=
\frac{\nu_n}{2}
\abs*{
	\parens*{\norm{\phi_n}_{L^2} - r_n}
	\parens*{\norm{\phi_n}_{L^2} + r_n}
}
\leq
\\
\leq
\frac{\nu_n}{2}
\abs{q_n} \norm{G_{\nu_n}}_{L^2}
\parens*{\abs{q_n} \norm{G_{\nu_n}}_{L^2} + 2 r_n}
=
\frac{3}{2} \nu_n r_n^2
=
\frac{3}{4} \times \frac{\sqrt{\nu_n}}{4 \pi} \abs{q_n}^2.
\end{multline}

Let us prove that
\begin{equation}
\label{SP:eqn:aux:tends_to_zero}
\norm{\phi_n}_{\dot{W}^{1, 2}}, ~ \frac{\abs{q_n}^2}{r_n},
~ B \parens{u_n}, ~ C \parens{u_n}
\xrightarrow[n \to \infty]{}
0.
\end{equation}
Indeed, it follows from \eqref{SP:eqn:aux:7} and \eqref{SP:eqn:triangular} that
\[
\I_\alpha \parens{r_n^2}
\leq
-
\frac{3 - p}{p - 2} \norm{\phi_n}_{\dot{W}^{1, 2}}^2
-
\frac{8 - 3 p}{4 \parens{p - 2}} \alpha \abs{q_n}^2
-
\frac{18 - 7 p}{4 \parens{p - 2}}
\times
\frac{\sqrt{\nu_n}}{4 \pi} \abs{q_n}^2
\leq
0.
\]
The limits $\frac{\abs{q_n}^2}{r_n}$, $\norm{\phi_n}_{\dot{W}^{1, 2}} \to 0$ as
$n \to \infty$ follow from Condition \ref{dich:cond:1}. The other limits follow from \eqref{SP:eqn:aux:scaling_1} and \eqref{SP:eqn:aux:scaling_2}.
\\ \\
\noindent \emph{1. Case $2 < p < \frac{12}{5}$.}
Due to Corollary \ref{cor:HLS}, there exists $K \in \ooi{0, \infty}$ such that
$B \parens{u_n} \leq K \norm{u_n}_{L^{\frac{12}{5}}}^4$ for every
$n \in \nat$. By interpolation,
\[
\norm{u_n}_{L^{\frac{12}{5}}}
\leq
\norm{u_n}_{L^p}^{1 - \gamma}
\norm{u_n}_{L^{\frac{13}{5}}}^{\gamma}
\]
for every $n \in \nat$, where
$\gamma := \frac{13}{12} \times \frac{12 - 5 p}{13 - 5 p}$.
In view of \eqref{SP:eqn:aux:scaling_2},
\[
\frac{p - 2}{p} \norm{u_n}_{L^p}^p
=
\frac{1}{2} B \parens{u_n}
\leq
K
\norm{u_n}_{L^p}^{4 \parens{1 - \gamma}}
\norm{u_n}_{L^{\frac{13}{5}}}^{4 \gamma}
\]
for every $n \in \nat$. It is easy to verify that
$4 \parens{1 - \gamma} - p = \frac{20 p - 39}{3 \parens{13 - 5 p}} > 0$ because $p > 2$. As such, we obtain a contradiction with \eqref{SP:eqn:aux:tends_to_zero}.
\\ \\
\noindent \emph{2. Case $p = \frac{12}{5}$.}
In view of \eqref{SP:eqn:aux:scaling_2}, we have
\[
\frac{1}{6} \norm{u_n}_{L^{\frac{12}{5}}}^{\frac{12}{5}}
=
\frac{1}{6} C \parens{u_n}
=
\frac{1}{2} B \parens{u_n}
\leq
c_1 \norm{u_n}_{L^{\frac{12}{5}}}^4
\]
for every $n \in \nat$, which contradicts \eqref{SP:eqn:aux:tends_to_zero}.
\\ \\
\noindent \emph{3. Case $\frac{12}{5} < p < \frac{5}{2}$.}
By interpolation,
\[
\norm{u_n}_{L^{\frac{12}{5}}}^4
\leq
r_n^{4 \parens{1 - \gamma}}
\norm{u_n}_{L^p}^{4 \gamma}
\]
for every $n \in \nat$, where
$\gamma := \frac{p}{6 \parens{p - 2}}$. We can use \eqref{SP:eqn:aux:scaling_2} to obtain a contradiction as in the previous cases.
\end{proof}


\appendix
\section{Properties of the energy functional associated with the $\delta$-NLSE}

The goal of this appendix is to prove that certain conditions of the results in Section \ref{intro:existence-of-soln} are verified in the case $I_\alpha = J_\alpha$, where the energy functional
$J_\alpha \colon W^{1, 2}_\alpha \to \real$ is defined as
\[
J_\alpha \parens{u}
=
\frac{1}{2} H_\alpha \parens{u} - \frac{1}{p} C \parens{u};
\]
$C \colon L^p \to \coi{0, \infty}$ is defined as $C \parens{u} = \norm{u}_{L^p}^p$ and it will be useful to consider the energy level
\[
\J_\alpha \parens{\rho^2}
:=
\inf_{W^{1, 2}_\alpha \parens{\rho^2}} J_\alpha
\in
\coi{- \infty, \infty}.
\]
Notice that $J_\alpha$ is the energy functional whose critical points are associated with standing waves of the $\delta$-NLSE \eqref{intro:eqn:NLSE}. The first result is \cite[Proposition III.1]{adamiExistenceStructureRobustness2022}, but we include its proof for the convenience of the reader.
\begin{lem}
\label{delta-NLSE:van:lem:cond:2}
Given $\rho \in \ooi{0, \infty}$,
$J_\alpha|_{W^{1, 2}_\alpha \parens{\rho^2}}$ is coercive and
$\J_\alpha \parens{\rho^2} > - \infty$.
\end{lem}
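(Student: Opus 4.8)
The plan is to deduce both assertions from a single scalar bound of the shape
\[
J_\alpha \parens{u}
\geq
\frac{1}{2} H_\alpha \parens{u}
-
C \parens{\rho} H_\alpha \parens{u}^{\frac{3 \parens{p - 2}}{4}}
\]
valid for every $u \in W^{1, 2}_\alpha \parens{\rho^2}$, where the exponent $\frac{3 \parens{p - 2}}{4}$ is strictly less than $1$ because $2 < p < 3$. First I would observe that the definition of $\angles{\cdot \mid \cdot}_{W^{1, 2}_\alpha}$ gives $\norm{u}_{W^{1, 2}_\alpha}^2 = \rho^2 + H_\alpha \parens{u}$ on the constraint, so that coercivity of $J_\alpha|_{W^{1, 2}_\alpha \parens{\rho^2}}$ is equivalent to $J_\alpha \parens{u} \to \infty$ as $H_\alpha \parens{u} \to \infty$. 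Since the real function $t \mapsto \frac{1}{2} t - C \parens{\rho} t^{\frac{3 \parens{p - 2}}{4}}$ is bounded below on $\coi{0, \infty}$ and tends to $+ \infty$, the displayed bound yields at once both $\J_\alpha \parens{\rho^2} > - \infty$ and coercivity.

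To obtain the bound, fix $u \in W^{1, 2}_\alpha \parens{\rho^2} \setminus W^{1, 2}$ and write it in the canonical form $u = \phi + q G_{\eps \abs{q}^4 / \rho^4}$ of Lemma \ref{lem:H_alpha}, with $\eps = 1 / \parens{8 \pi}^2$. Because $\alpha \in \coi{0, \infty}$, every term in the formula of Lemma \ref{lem:H_alpha} is nonnegative, so $H_\alpha \parens{u} \geq \norm{\phi}_{\dot{W}^{1, 2}}^2 + \frac{\abs{q}^4}{\parens{8 \pi \rho}^2}$; in particular $\norm{\phi}_{\dot{W}^{1, 2}}^2$ and $\abs{q}^4$ are each bounded by $C \parens{\rho} H_\alpha \parens{u}$. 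Next I would apply the second Gagliardo--Nirenberg inequality of Proposition \ref{prelim:prop:GN} with $r = p$ (admissible since $2 < p < 3$) to control $\norm{u}_{L^p}^p$ by a constant times $\norm{\phi}_{\dot{W}^{1, 2}}^{\frac{3 \parens{p - 2}}{2}} \rho^{\frac{6 - p}{2}} + \abs{q}^{3 \parens{p - 2}} \rho^{2 \parens{3 - p}}$.

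Writing the two $\phi$- and $q$-factors as $\parens{\norm{\phi}_{\dot{W}^{1, 2}}^2}^{\frac{3 \parens{p - 2}}{4}}$ and $\parens{\abs{q}^4}^{\frac{3 \parens{p - 2}}{4}}$, and using $s^a + t^a \leq 2 \parens{s + t}^a$ for $s, t \geq 0$ and $0 < a \leq 1$ together with the previous step, I would conclude $\norm{u}_{L^p}^p \leq C \parens{\rho} H_\alpha \parens{u}^{\frac{3 \parens{p - 2}}{4}}$, whence the scalar bound. The main point requiring care is the nonuniqueness of representation: the formula of Lemma \ref{lem:H_alpha} and the second form of Proposition \ref{prelim:prop:GN} both demand the specific parameter $\lambda = \eps \abs{q}^4 / \rho^4$, so this representation must be fixed once and used consistently. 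Finally, the degenerate case $u \in W^{1, 2}$ (where $q = 0$ and $H_\alpha \parens{u} = \norm{u}_{\dot{W}^{1, 2}}^2$) falls outside the second inequality and should instead be treated with the first Gagliardo--Nirenberg inequality of Proposition \ref{prelim:prop:GN}, which produces the same bound and completes the argument.
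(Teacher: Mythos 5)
Your proposal is correct and rests on the same ingredients as the paper's proof: the split into $W^{1,2}(\rho^2)$ and its complement, the explicit formula of Lemma \ref{lem:H_alpha} in the canonical representation $\lambda = \eps\abs{q}^4/\rho^4$, and the two forms of the Gagliardo--Nirenberg inequality in Proposition \ref{prelim:prop:GN}, with the same exponent comparison ($3(p-2)/4 < 1$ is exactly the paper's $3(p-2)/2 < 2$). The only difference is organizational: you collapse the paper's two-variable lower bound in $\parens{\norm{\phi}_{\dot{W}^{1,2}}, \abs{q}}$ into a single scalar bound in $H_\alpha(u)$, which lets you read off coercivity and $\J_\alpha(\rho^2) > -\infty$ simultaneously from the one-variable function $t \mapsto \frac{1}{2}t - C(\rho)\,t^{3(p-2)/4}$ instead of the paper's four separate sub-steps.
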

\begin{proof}
\emph{1. Proof that $J_\alpha|_{W^{1, 2}_\alpha \parens{\rho^2}}$ is coercive.}
It suffices to prove that both
\[
J|_{W^{1, 2} \parens{\rho^2}}
\quad \text{and} \quad
J_\alpha|_{W^{1, 2}_\alpha \parens{\rho^2} \setminus W^{1, 2} \parens{\rho^2}}
\]
are coercive, where $J := J_\alpha|_{W^{1, 2}}$.
\\ \\
\noindent \emph{1.1. Proof that $J|_{W^{1, 2}} \parens{\rho^2}$ is coercive.}
Due to the Gagliardo--Nirenberg Inequality, there exists $K \in \ooi{0, \infty}$ such that
\begin{multline}
\label{delta-NLSE:aux:1}
J \parens{\phi}
\geq
\frac{1}{2} \norm{\phi}_{\dot{W}^{1, 2}}^2
-
\frac{1}{p} C \parens{\phi}
\geq
\frac{1}{2} \norm{\phi}_{\dot{W}^{1, 2}}^2
-
K
\rho^{\frac{6 - p}{2}}
\norm{\phi}_{\dot{W}^{1, 2}}^{\frac{3 \parens{p - 2}}{2}}
\\
\text{for every} \quad
\phi \in W^{1, 2} \parens{\rho^2}.
\end{multline}
\\ \\
\noindent \emph{1.2. Proof that
$
J_\alpha|_{
	W^{1, 2}_\alpha \parens{\rho^2} \setminus W^{1, 2} \parens{\rho^2}
}
$ is coercive.}
Let $\eps = \frac{1}{\parens{8 \pi}^2}$. It follows from Lemma \ref{lem:H_alpha} and Proposition \ref{prelim:prop:GN} that there exists $K \in \ooi{0, \infty}$ such that
\begin{multline*}
J_\alpha \parens{u}
\geq
\frac{1}{2} \norm{\phi}_{\dot{W}^{1, 2}}^2
+
\frac{\abs{q}^4}{2 \parens{8 \pi \rho}^2}
\parens*{1 + \frac{\norm{\phi}_{L^2}^2}{\rho^2}}
-
\\
-
K
\parens*{
	\rho^{\frac{6 - p}{2}}
	\norm{\phi}_{\dot{W}^{1, 2}}^{\frac{3 \parens{p - 2}}{2}}
	+
	\rho^{2 \parens{3 - p}}
	\abs{q}^{3 \parens{p - 2}}
}
\end{multline*}
for every
$
u
=
\phi + q G_{\eps \abs{q}^4 / \rho^4}
\in
W^{1, 2}_\alpha \parens{\rho^2} \setminus W^{1, 2} \parens{\rho^2}
$.
\\ \\
\emph{2. Proof that $\J_\alpha \parens{\rho^2} > - \infty$.}
It suffices to prove that
\[
\J \parens{\rho^2}
:=
\inf_{W^{1, 2} \parens{\rho^2}} J > - \infty
\quad \text{and} \quad
\inf_{W^{1, 2}_\alpha \parens{\rho^2} \setminus W^{1, 2} \parens{\rho^2}}
	J_\alpha
>
- \infty.
\]
\\ \\
\noindent \emph{2.1. Proof that $\J \parens{\rho^2} > - \infty$.}
In view of \eqref{delta-NLSE:aux:1}, this is a corollary of the fact that
\[
W^{1, 2} \parens{\rho^2} \ni \phi
\mapsto
\frac{1}{2} \norm{\phi}_{\dot{W}^{1, 2}}^2
-
K
\rho^{\frac{6 - p}{2}}
\norm{\phi}_{\dot{W}^{1, 2}}^{\frac{3 \parens{p - 2}}{2}}
\in
\real
\]
is bounded below.
\\ \\
\noindent \emph{2.2. Proof that
$
\inf_{u \in W^{1, 2}_\alpha \parens{\rho^2} \setminus W^{1, 2} \parens{\rho^2}}
	J_\alpha \parens{u}
>
- \infty
$.}
The proof is similar to the previous one.
\end{proof}

The next result shows that the analog to Condition \ref{dich:cond:1} in Theorem \ref{dich:thm} is satisfied.

\begin{lem}
\label{delta-NLSE:dich:lem:cond:1}
The limit $\frac{\mathcal{J}_\alpha \parens{\rho^2}}{\rho^2} \to 0$ as $\rho \to 0^+$ is satisfied.
\end{lem}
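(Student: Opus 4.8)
The plan is to trap the quotient $\J_\alpha \parens{\rho^2} / \rho^2$ between two quantities that both vanish as $\rho \to 0^+$ and then invoke the Squeeze Theorem. Concretely, I would show that $\J_\alpha \parens{\rho^2} \leq 0$ for every $\rho$, while $\J_\alpha \parens{\rho^2} \geq - C \rho^\gamma$ for a constant $C$ independent of $\rho \in \ooi{0, 1}$ and an exponent $\gamma > 2$. Dividing by $\rho^2$ then confines $\J_\alpha \parens{\rho^2} / \rho^2$ to the interval between $- C \rho^{\gamma - 2}$ and $0$, which collapses to $\set{0}$ in the limit.

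For the upper bound I would fix a profile $w \in C_c^\infty$ with $\norm{w}_{L^2} = 1$ and test $J$ against the rescaled functions $\phi_\sigma := \rho \, \sigma^{3 / 2} w \parens{\sigma \, \cdot} \in W^{1, 2} \parens{\rho^2} \subset W^{1, 2}_\alpha \parens{\rho^2}$. Since $J_\alpha$ restricts to $J$ on $W^{1, 2}$, we have $\J_\alpha \parens{\rho^2} \leq J_\alpha \parens{\phi_\sigma} = J \parens{\phi_\sigma}$, and the elementary scalings of $\norm{\cdot}_{\dot{W}^{1, 2}}^2$ and $\norm{\cdot}_{L^p}^p$ give $J \parens{\phi_\sigma} = \frac{1}{2} \rho^2 \sigma^2 \norm{w}_{\dot{W}^{1, 2}}^2 - \frac{1}{p} \rho^p \sigma^{3 \parens{p - 2} / 2} \norm{w}_{L^p}^p$. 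Because $\frac{3 \parens{p - 2}}{2} < 2$, letting $\sigma \to 0^+$ makes the negative potential term dominate the positive kinetic term, so $J \parens{\phi_\sigma} < 0$ for small $\sigma$, whence $\J_\alpha \parens{\rho^2} \leq 0$.

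The lower bound is the crux. I would split $W^{1, 2}_\alpha \parens{\rho^2}$ into $W^{1, 2} \parens{\rho^2}$ and its complement, as in the proof of Lemma \ref{delta-NLSE:van:lem:cond:2}. On the complement, writing $u = \phi + q G_{\eps \abs{q}^4 / \norm{u}_{L^2}^4}$ with $\eps = \parens{8 \pi}^{-2}$, Lemma \ref{lem:H_alpha} together with $\alpha \geq 0$ gives $H_\alpha \parens{u} \geq \norm{\phi}_{\dot{W}^{1, 2}}^2 + \abs{q}^4 / \parens{8 \pi \rho}^2$, while Proposition \ref{prelim:prop:GN} (with $r = p$) bounds $C \parens{u}$ by a sum of the terms $\norm{\phi}_{\dot{W}^{1, 2}}^{3 \parens{p - 2} / 2} \rho^{\parens{6 - p} / 2}$ and $\abs{q}^{3 \parens{p - 2}} \rho^{2 \parens{3 - p}}$. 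I would then estimate $J_\alpha \parens{u}$ from below by treating the $\norm{\phi}_{\dot{W}^{1, 2}}$ variable with Young's inequality (absorbing the offending term into $\frac{1}{2} \norm{\phi}_{\dot{W}^{1, 2}}^2$) and the $\abs{q}$ variable by directly minimizing $t \mapsto t^4 / \parens{8 \pi \rho}^2 - \mathrm{const} \cdot t^{3 \parens{p - 2}} \rho^{2 \parens{3 - p}}$ over $t \geq 0$. Both procedures produce a remainder of the same order $\rho^{2 \parens{6 - p} / \parens{10 - 3 p}}$, and the $W^{1, 2}$ part is just the special case $q = 0$, contributing only the first remainder.

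The decisive point — and the step I expect to be the main obstacle — is verifying that this remainder exponent strictly exceeds $2$, since only then does dividing by $\rho^2$ leave a positive power of $\rho$. A short computation shows that $\frac{2 \parens{6 - p}}{10 - 3 p} > 2$ is equivalent to $p > 2$, which holds throughout the admissible range (there one also uses $\frac{3 \parens{p - 2}}{2} < 2$ and $3 \parens{p - 2} < 4$, both equivalent to $p < \frac{10}{3}$, to justify the Young absorption and the finiteness of the one-variable minimum). Collecting the estimates yields $\J_\alpha \parens{\rho^2} \geq - C \rho^{2 \parens{6 - p} / \parens{10 - 3 p}}$ with $C$ uniform for small $\rho$; combined with $\J_\alpha \parens{\rho^2} \leq 0$ this gives $- C \rho^{2 \parens{6 - p} / \parens{10 - 3 p} - 2} \leq \J_\alpha \parens{\rho^2} / \rho^2 \leq 0$, so the quotient tends to $0$ as $\rho \to 0^+$.
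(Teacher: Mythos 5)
Your proof is correct, but it follows a genuinely different route from the paper's. The paper proves this lemma by invoking the existence of a minimizer $u_\rho$ of $J_\alpha|_{W^{1, 2}_\alpha \parens{\rho^2}}$ together with its Lagrange multiplier $- \omega_\rho$ (both imported from Adami, Boni, Carlone \& Tentarelli), using the identity $- \frac{\omega_\rho}{2} = \frac{H_\alpha \parens{u_\rho} - C \parens{u_\rho}}{2 \rho^2} \leq \frac{J_\alpha \parens{u_\rho}}{\rho^2} < 0$ to reduce the claim to $\omega_\rho \to 0$, which is then established by a contradiction argument based on the same Gagliardo--Nirenberg bounds you use. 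Your argument dispenses with minimizers and multipliers entirely: the upper bound $\J_\alpha \parens{\rho^2} \leq 0$ by scaled test functions is immediate, and your lower bound is a direct two-variable optimization of the coercivity estimate from Lemma \ref{delta-NLSE:van:lem:cond:2}, with the exponent arithmetic checking out ($\frac{2 \parens{6 - p}}{10 - 3 p} > 2$ is indeed equivalent to $p > 2$ since $10 - 3p > 0$ on the admissible range, and the Young absorption and the one-variable minimum in $\abs{q}$ both require $p < \frac{10}{3}$, which holds). What your approach buys is self-containedness and an explicit rate $\J_\alpha \parens{\rho^2} = O \parens{\rho^{2 \parens{6 - p} / \parens{10 - 3 p}}}$; what the paper's approach buys is reuse of machinery already needed elsewhere and a template that survives in settings where explicit scaling competitors are harder to produce. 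Both are valid proofs of the statement.
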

\begin{proof}
Let us argue as in the proof of the similar result \cite[Lemma A.1]{bellazziniScalingPropertiesFunctionals2011}.
\\ \\
\noindent \emph{1. Proof of the lemma.}
In view of \cite[Theorem I.2]{adamiExistenceStructureRobustness2022}, we can associate each $\rho \in \ooi{0, \infty}$ with a pair
$
\parens{u_\rho, \omega_\rho}
\in
W^{1, 2}_\alpha \parens{\rho^2} \times \real
$
such that $u_\rho$ is a minimizer of $J_\alpha|_{W^{1, 2}_\alpha \parens{\rho^2}}$ and
$- \omega_\rho$ is its associated Lagrange multiplier, that is,
$J_\alpha \parens{u_\rho} = \mathcal{J}_\alpha \parens{\rho^2}$
and
\[
J_\alpha' \parens{u_\rho} \brackets{v}
=
-
\omega_\rho
\Re \brackets*{
	\int \overline{u_\rho \parens{x}} v \parens{x} \dif x
}
\]
for every $v \in W^{1, 2}_\alpha$. At this point, we only have to prove that the following limit is satisfied:
\begin{equation}
\label{eqn:omega}
\omega_\rho \xrightarrow[\rho \to 0^+]{} 0.
\end{equation}
Indeed, it follows from \cite[Proposition III.2]{adamiExistenceStructureRobustness2022} that
\[
-\frac{\omega_\rho}{2}
=
\frac{H_\alpha \parens{u_\rho} - C \parens{u_\rho}}{2 \rho^2}
\leq
\frac{1}{\rho^2}
\parens*{
	\frac{1}{2} H_\alpha \parens{u_\rho}
	-
	\frac{1}{p} C \parens{u_\rho}
}
=
\frac{J_\alpha \parens{u_\rho}}{\rho^2}
<
0
\]
for every $\rho \in \ooi{0, \infty}$, so the lemma follows from \eqref{eqn:omega}.
\\ \\
\noindent \emph{2. Proof of \eqref{eqn:omega}.}
By contradiction, suppose that there exists $c_1 \in \ooi{0, 1}$ and
$\set{\rho_n}_{n \in \nat} \subset \ooi{0, \infty}$ such that
$\rho_n \to 0$ as $n \to \infty$ and $\omega_n := \omega_{\rho_n} \geq c_1$ for every
$n \in \nat$. 

Given $n \in \nat$, let $u_n = u_{\rho_n} \in W^{1, 2}_\alpha \parens{\rho_n^2}$.
In view of Lemma \ref{lem:vanishing},
\[u_n \in W^{1, 2}_\alpha \parens{\rho_n^2} \setminus W^{1, 2} \parens{\rho_n^2}\]
for every $n \in \nat$. As such, we can associate each $n \in \nat$ with a pair
\[\parens{\phi_n, q_n} \in W^{1, 2} \times \parens{\complex \setminus \set{0}}\]
such that $u_n = \phi_n + q_n G_{\nu_n}$, where
$\nu_n := \eps \frac{\abs{q_n}^4}{\rho_n^4} > 0$ and
$\eps := \frac{1}{\parens{8 \pi}^2}$.
In particular, there exists $c_3 \in \ooi{0, \infty}$ such that
\begin{equation}
\label{Kirchhoff:aux:1}
\norm{u_n}_{W^{1, 2}_\alpha}^{p - 2} \geq c_3
\quad \text{for every} \quad
n \in \nat.
\end{equation}

Let $K_\eps \in \ooi{0, \infty}$ be furnished by Proposition \ref{prelim:prop:GN}. In view of Lemma \ref{lem:H_alpha},
\begin{multline}
\label{Kirchhoff:eqn:coercive}
0
>
J_\alpha \parens{u_n}
\geq
\frac{1}{2} \norm{\phi_n}_{\dot{W}^{1, 2}}^2
+
\frac{\abs{q_n}^4}{2 \parens{8 \pi \rho_n}^2}
\parens*{1 + \frac{\norm{\phi_n}_{L^2}^2}{\rho_n^2}}
-
\frac{1}{p} \norm{u_n}_{L^p}^p
\geq
\\
\geq
\parens*{
	\frac{1}{2} \norm{\phi_n}_{\dot{W}^{1, 2}}^2
	-
	\frac{K_\eps}{p}
	\norm{\phi_n}_{\dot{W}^{1, 2}}^{\frac{3 \parens{p - 2}}{2}}
	\rho_n^{\frac{6 - p}{2}}
}
+
\\
+
\parens*{
	\frac{\abs{q_n}^4}{2 \parens{8 \pi \rho_n}^2}
	\parens*{1 + \frac{\norm{\phi_n}_{L^2}^2}{\rho_n^2}}
	-
	\frac{K_\eps}{p}
	\abs{q_n}^{3 \parens{p - 2}}
	\rho_n^{2 \parens{3 - p}}
}
\quad \text{for every} \quad n \in \nat.
\end{multline}

We claim that $\liminf_{n \to \infty} \norm{\phi_n}_{\dot{W}^{1, 2}} = 0$. By contradiction, suppose that $\liminf_{n \to \infty} \norm{\phi_n}_{\dot{W}^{1, 2}} > 0$. Due to the limit
$\rho_n \to 0$ as $n \to \infty$,
\[
\liminf_{n \to \infty}
\parens*{
	\frac{\abs{q_n}^4}{2 \parens{8 \pi \rho_n}^2}
	\parens*{1 + \frac{\norm{\phi_n}_{L^2}^2}{\rho_n^2}}
	-
	\frac{K_\eps}{p}
	\abs{q_n}^{3 \parens{p - 2}}
	\rho_n^{2 \parens{3 - p}}
}
\geq
0
\]
and
\[
\liminf_{n \to \infty}
\parens*{
	\frac{1}{2} \norm{\phi_n}_{\dot{W}^{1, 2}}^2
	-
	\frac{K_\eps}{p}
	\norm{\phi_n}_{\dot{W}^{1, 2}}^{\frac{3 \parens{p - 2}}{2}}
	\rho_n^{\frac{6 - p}{2}}
}
>
0.
\]
We obtained a contradiction with \eqref{Kirchhoff:eqn:coercive}, so
$\liminf_{n \to \infty} \norm{\phi_n}_{\dot{W}^{1, 2}} = 0$. A similar argument shows that
$\liminf_{n \to \infty} \frac{\abs{q_n}^2}{\rho_n} = 0$. In particular, it follows from Lemma \ref{lem:H_alpha} that $\liminf_{n \to \infty} \norm{u_n}_{W^{1, 2}_\alpha} = 0$. This contradicts \eqref{Kirchhoff:aux:1} and the proof is finished.
\end{proof}

Our last result is that the analog to \eqref{eqn:dichotomy:T_3} will also be satisfied.

\begin{lem}
\label{delta-NLSE:dich:lem:T_3}
\sloppy
If $r \in \ooi{0, \infty}$ and $\parens{u_n}_{n \in \nat}$ is a Palais--Smale sequence of $I_\alpha|_{W^{1, 2}_\alpha \parens{r^2}}$ such that
\[
I_\alpha \parens{u_n} \xrightarrow[n \to \infty]{} \I_\alpha \parens{r^2}
\quad \text{and} \quad
u_n \xrightharpoonup[n \to \infty]{W^{1, 2}_\alpha} u_\infty \not \equiv 0,
\]
then
\[
\parens*{C ' \parens{u_n} - C ' \parens{u_m}} \brackets{u_n - u_m}
\xrightarrow[n, m \to \infty]{}
0.
\]
\end{lem}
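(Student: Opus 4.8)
My plan is to reduce the claim to the strong convergence $u_n \to u_\infty$ in $L^p$ and then to establish that convergence by excluding escape of mass to infinity. First I would record that $C$ is of class $C^1$ with
\[
C'\parens{u}\brackets{v}
=
p \Re \brackets*{
	\int \abs{u\parens{x}}^{p - 2}\overline{u\parens{x}} v\parens{x} \dif x
}.
\]
Writing $f\parens{z} := \abs{z}^{p - 2} z$ and using $\Re \overline{w} = \Re w$, the quantity under study becomes
\[
\parens*{C'\parens{u_n} - C'\parens{u_m}}\brackets{u_n - u_m}
=
p \Re \int
	\parens*{f\parens{u_n\parens{x}} - f\parens{u_m\parens{x}}}
	\overline{\parens{u_n - u_m}\parens{x}}
\dif x.
\]
Since $p > 2$, the elementary monotonicity inequality $\Re\brackets{\parens{f\parens{a} - f\parens{b}}\overline{\parens{a - b}}} \geq c_p \abs{a - b}^p$ (valid for $a, b \in \complex$ with a suitable $c_p > 0$) yields the lower bound $\parens*{C'\parens{u_n} - C'\parens{u_m}}\brackets{u_n - u_m} \geq p c_p \norm{u_n - u_m}_{L^p}^p \geq 0$. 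Conversely, Hölder's Inequality gives the upper bound $p\parens*{\norm{u_n}_{L^p}^{p - 1} + \norm{u_m}_{L^p}^{p - 1}}\norm{u_n - u_m}_{L^p}$, whose first factor is bounded because $\parens{u_n}_{n \in \nat}$, being weakly convergent, is bounded in $W^{1, 2}_\alpha \hookrightarrow L^p$. Hence the assertion is \emph{equivalent} to $\parens{u_n}_{n \in \nat}$ being a Cauchy sequence in $L^p$, that is, to the strong convergence $u_n \to u_\infty$ in $L^p$.

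To analyse this convergence I would invoke Lemma \ref{lem:weak-convergence} to write $u_n = \phi_{\lambda, n} + q_n G_\lambda$ and $u_\infty = \phi_{\lambda, \infty} + q_\infty G_\lambda$ with $q_n \to q_\infty$ in $\complex$ and $\phi_{\lambda, n} \rightharpoonup \phi_{\lambda, \infty}$ in $W^{1, 2}$. The charge contribution converges strongly, since $\norm{q_n G_\lambda - q_\infty G_\lambda}_{L^p} = \abs{q_n - q_\infty}\norm{G_\lambda}_{L^p} \to 0$ by \eqref{eqn:integrability-of-G_lambda}, so the problem reduces to showing $\phi_{\lambda, n} \to \phi_{\lambda, \infty}$ in $L^p$. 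Passing to a subsequence and using the compact embedding $W^{1, 2}_\alpha\parens{K} \hookrightarrow\hookrightarrow L^p\parens{K}$ on compact $K \subset \real^3$, I may assume $u_n \to u_\infty$ a.e., so that $\psi_n := \phi_{\lambda, n} - \phi_{\lambda, \infty} \rightharpoonup 0$ in $W^{1, 2}$ and $\psi_n \to 0$ in $L^p_\loc$. What remains is to rule out that the $L^p$-mass of $\psi_n$ escapes to infinity.

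This exclusion is the \emph{main obstacle}. I would treat it through Lions' vanishing dichotomy: if $\psi_n \not\to 0$ in $L^p$, then nonvanishing produces translations $\parens{y_n}_{n \in \nat}$ with $\abs{y_n} \to \infty$ (forced by $\psi_n \to 0$ in $L^p_\loc$) along which $\psi_n\parens{\cdot + y_n}$ converges weakly to some $w \not\equiv 0$. The Palais--Smale hypothesis supplies Lagrange multipliers $\omega_n$, which I would show to be bounded by testing the constrained Euler--Lagrange relation against $u_n$ and using the boundedness of $\parens{u_n}_{n \in \nat}$ and of $T_\alpha'$; up to a subsequence $\omega_n \to \omega_\infty$. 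Because $H_\alpha$ differs from the free Dirichlet energy only through the point interaction localised at the origin, translating by $y_n$ with $\abs{y_n} \to \infty$ turns the constrained equation into the translation-invariant free equation in the limit, so that $w$ solves the corresponding free Euler--Lagrange equation with multiplier $\omega_\infty$ and carries a positive amount of mass and energy. This surviving bubble would contradict the minimising property of $\parens{u_n}_{n \in \nat}$ together with the strict energy inequalities available in the applications (the strict sub-additivity \eqref{eqn:SSC}, or equivalently the gap $\I_\alpha < \I$), forcing $w \equiv 0$. Once escape to infinity is excluded, $\psi_n \to 0$ in $L^p$, whence $u_n \to u_\infty$ in $L^p$ and the conclusion follows from the reduction in the first paragraph; the remaining bookkeeping (identification of the a.e. limit and the Brezis--Lieb splitting of the $L^p$-norm) is routine.
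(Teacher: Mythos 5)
Your first two paragraphs are sound: the reduction of the claim to $\norm{u_n - u_m}_{L^p} \to 0$ via Hölder's Inequality (the monotonicity lower bound is a nice extra, though only the upper bound is needed for the lemma), and the splitting of the charge contribution using Lemma \ref{lem:weak-convergence} and \eqref{eqn:integrability-of-G_lambda}, both match what the paper does. The gap is in your third paragraph, which is where the entire substance of the lemma lies: the exclusion of escape of mass to infinity is left as a plan rather than a proof. You invoke Lions' dichotomy, an unproved boundedness of Lagrange multipliers, an unverified identification of the limit equation satisfied by the bubble $w$, and finally a contradiction with ``the strict energy inequalities available in the applications'' --- hypotheses (\eqref{eqn:SSC}, or the gap $\I_\alpha < \I$) that are not part of the lemma's statement and are not interchangeable with one another. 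Since $I_\alpha$ is not translation invariant, making the bubble argument rigorous would essentially amount to redoing the whole dichotomy analysis in a translated frame, and none of the required estimates are supplied.

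The paper avoids all of this with one observation you are missing: Lemma \ref{dich:lem:2} (applied in the case $I_\alpha = J_\alpha$) already gives $\norm{u_\infty}_{L^2} = r$, so weak convergence in $L^2$ together with convergence of the norms upgrades to \emph{strong} $L^2$ convergence, $\norm{u_n - u_\infty}_{L^2} \to 0$; combined with $\abs{q_n - q_m} \to 0$ this yields $\norm{\phi_{\lambda, n} - \phi_{\lambda, m}}_{L^2} \to 0$. The Gagliardo--Nirenberg-type inequality of Proposition \ref{prelim:prop:GN} then interpolates: $\norm{u_n - u_m}_{L^p}^p$ is controlled by $\norm{\phi_{\lambda, n} - \phi_{\lambda, m}}_{\dot{W}^{1, 2}}^{3 \parens{p - 2} / 2} \norm{\phi_{\lambda, n} - \phi_{\lambda, m}}_{L^2}^{\parens{6 - p} / 2}$ plus the charge term, and boundedness of $\parens{\phi_{\lambda, n}}_{n \in \nat}$ in $\dot{W}^{1, 2}$ finishes the proof. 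No translations, bubbles, or concentration-compactness are needed; the possibility of $L^p$-mass escaping to infinity is already ruled out at the $L^2$ level by the dichotomy exclusion, and interpolation transports this to $L^p$. To repair your argument, replace your third paragraph by this appeal to Lemma \ref{dich:lem:2} together with Proposition \ref{prelim:prop:GN}.
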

\begin{proof}
In view of Hölder's Inequality,
\begin{multline*}
\abs*{
	C' \parens{u_n} \brackets{u_n - u_m}
	-
	C' \parens{u_m} \brackets{u_n - u_m}
}
\leq
\\
\leq
\norm*{u_n \abs{u_n}^{p - 2} - u_m \abs{u_m}^{p - 2}}_{L^{p'}}^p
\norm{u_n - u_m}_{L^p},
\end{multline*}
We know that $\parens{u_n}_{n \in \nat}$ is bounded in $W^{1, 2}_\alpha$ and
$W^{1, 2}_\alpha \hookrightarrow L^p$, so it suffices to show that
\begin{equation}
\label{Kirchhoff:eqn:I_5:1}
\norm{u_n - u_m}_{L^p} \xrightarrow[n, m \to \infty]{} 0.
\end{equation}
Given $n \in \nat$, let $\parens{\phi_{\lambda, n}, q_n} \in W^{1, 2} \times \complex$ be such that $u_n = \phi_{\lambda, n} + q_n G_\lambda$. In view of Proposition \ref{prelim:prop:GN}, there exists $K \in \ooi{0, \infty}$ such that
\begin{multline}
\label{Kirchhoff:eqn:I_5:2}
\norm{u_n - u_m}_{L^p}^p
\leq
K
\parens*{
	\norm{\phi_n - \phi_m}_{\dot{W}^{1, 2}}^{\frac{3 \parens{p - 2}}{2}}
	\norm{\phi_n - \phi_m}_{L^2}^{\frac{6 - p}{2}}
	+
	\frac{\abs{q_n - q_m}^p}{\lambda^{\frac{3 - p}{2}}}
}
\\
\text{for every} ~ n \in \nat.
\end{multline}
Due to the weak convergence $u_n \rightharpoonup u_\infty$ in $W^{1, 2}_\alpha$ as $n \to \infty$, we deduce that $u_n \rightharpoonup u_\infty$ in $L^2$ as $n \to \infty$ and
\begin{equation}
\label{Kirchhoff:eqn:I_5:3}
\abs{q_n - q_m} \xrightarrow[n, m \to \infty]{} 0.
\end{equation}
An application of Lemma \ref{dich:lem:2} in the case $I_\alpha = J_\alpha$ shows that
$\norm{u_\infty}_{L^2} = r$. Therefore,
$\lim_{n \to \infty} \norm{u_n - u_\infty}_{L^2} = 0$. In view of \eqref{Kirchhoff:eqn:I_5:3} and this limit, we deduce that
\begin{equation}
\label{Kirchhoff:eqn:I_5:4}
\norm{\phi_n - \phi_m}_{L^2} \xrightarrow[n, m \to \infty]{} 0.
\end{equation}
The set $\set{\phi_n}_{n \in \nat}$ is bounded in $\dot{W}^{1, 2}$, so \eqref{Kirchhoff:eqn:I_5:1} follows from \eqref{Kirchhoff:eqn:I_5:2}--\eqref{Kirchhoff:eqn:I_5:4}.
\end{proof}

\sloppy
\printbibliography
\end{document}